\newtheorem{definition}{Definition}
\newtheorem{remark}{Remark}
\newtheorem{theorem}{Theorem}
\newtheorem{lemma}{Lemma}
\newtheorem{corollary}{Corollary}
\newenvironment{proof}{{\bf Proof.}}{$ \Box $}
\title{A generalized Calder\'on formula for open-arc diffraction problems: theoretical considerations}
\author{St\'ephane K. Lintner and Oscar P. Bruno}
\begin{document}

\maketitle
\begin{abstract}
  We deal with the general problem of scattering by open-arcs in
  two-dimensional space. We show that this problem can be solved by
  means of certain second-kind integral equations of the form
  $\tilde{N} \tilde{S}[\varphi] = f$, where $\tilde{N}$ and
  $\tilde{S}$ are first-kind integral operators whose composition
  gives rise to a generalized Calder\'on formula of the form
  $\tilde{N} \tilde{S} = \tilde{J}_0^\tau + \tilde{K}$ in a {\em
    weighted, periodized} Sobolev space. (Here $\tilde{J}^\tau_0$ is a
  continuous and continuously invertible operator and $\tilde{K}$ is a
  compact operator.) The $\tilde{N} \tilde{S}$ formulation provides,
  for the first time, a second-kind integral equation for the open-arc
  scattering problem with Neumann boundary conditions. Numerical
  experiments show that, for both the Dirichlet and Neumann boundary
  conditions, our second-kind integral equations have spectra that are
  bounded away from zero and infinity as $k\to \infty$; to the
  authors' knowledge these are the first integral equations for these
  problems that possess this desirable property. This situation is in
  stark contrast with that arising from the related {\em classical}
  open-surface hypersingular and single-layer operators $\mathbf{N}$
  and $\mathbf{S}$, whose composition $\mathbf{NS}$ maps, for example,
  the function $\phi =1$ into a function that is not even square
  integrable. Our proofs rely on three main elements: 1)~Algebraic
  manipulations enabled by the presence of integral weights; 2)~Use of
  the classical result of continuity of the Ces\`aro operator; and
  3)~Explicit characterization of the point spectrum of
  $\tilde{J}^\tau_0$, which, interestingly, can be decomposed into the
  union of a countable set and an open set, both of which are tightly
  clustered around $-\frac{1}{4}$.  As shown in a separate
  contribution, the new approach can be used to construct simple
  spectrally-accurate numerical solvers and, when used in conjunction
  with Krylov-subspace iterative solvers such as GMRES, it gives rise
  to dramatic reductions of iteration numbers vs. those required by
  other approaches.
\end{abstract}



\section{Introduction\label{intro}}

The field of Partial Differential Equations (PDEs) with boundary
values prescribed on open surfaces has a long and important history,
including significant contributions in the theory of diffraction by
open screens, elasticity problems in solids containing cracks, and
fluid flow past plates; solution to such problems impact significantly
on present day technologies such as wireless transmission, electronics
and photonics. From a mathematical point of view, besides techniques
applicable to simple geometries, existing solution methods include
special adaptations of finite-element and boundary-integral methods
that account in some fashion for the singular character of the PDE
solutions at edges. With much progress in the area over the last sixty
years the field remains challenging: typically only low-frequency
open-surface problems can be treated with any accuracy by previous
approaches. 

In this paper we focus on the problem of electromagnetic and acoustic
scattering by open arcs. In particular, we introduce certain
first-kind integral operators $\mathbf{N}_\omega$ and
$\mathbf{S}_\omega$ whose composition gives rise, after appropriate
change of periodic variables, to a generalized Calder\'on formula
$\tilde{N}\tilde{S} = \tilde{J}^\tau_0 + \tilde{K}$---where
$\tilde{J}^\tau_0$ is a continuous and continuously invertible
operator and where $\tilde{K}$ is a compact operator---together with
associated {\em second-kind open-surface integral equations} of the
form $\tilde{N} \tilde{S}[\varphi] = f$.  This approach enables, for
the first time, treatment of open-arc scattering problems with Neumann
boundary conditions by means of second kind equations. Further, a wide
range of numerical experiments~\cite{BrunoLintner2} indicate that, for
both the Dirichlet and Neumann boundary conditions, our second-kind
integral equations have spectra that are bounded away from zero and
infinity as $k\to \infty$, and give rise to high accuracies and
dramatic reductions of Krylov-subspace iteration numbers vs. those
required by other approaches. These methods and results were first
announced in~\cite{BrunoOberwolfach}; succinct proofs of the open-arc
Calder\'on formulae, further, were presented in~\cite{BrunoLintner2}.

Integral equation methods provide manifold advantages over other
methodologies: they do not suffer from the well known
pollution~\cite{BabuskaSauterStefan} and dispersion~\cite{Jameson}
errors characteristic of finite element and finite difference methods,
they automatically enforce the condition of radiation at infinity
(without use of absorbing boundary conditions), and they lend
themselves to (typically iterative) acceleration
techniques~\cite{BleszynskiBleszynskiJaroszewicz,BrunoKunyansky,Rokhlin}---which
can effectively take advantage of the reduced dimensionality arising
from boundary-integral equations, even for problems involving very
high-frequencies. Special difficulties inherent in open-surface
boundary-integral formulations arise from the solution's edge
singularity~\cite{Maue,Stephan,Costabel}. Such difficulties have
typically been tackled by incorporating the singularity explicitly in
both Galerkin~\cite{Stephan,StephanWendland,StephanWendland2} and
Nystr\"om~\cite{AtkinsonSloan,RokhlinJiang,Monch} integral solvers;
with one exception (introduced in the
contributions~\cite{AtkinsonSloan,RokhlinJiang} and discussed below in
some detail), in all of these cases integral equations of the first
kind were used. While providing adequate discretizations of the
problem, first-kind integral equation can be poorly conditioned and,
for high-frequencies, they require large numbers of iterations and
long computing times when accelerated iterative solvers as mentioned
above are used.

(The literature on the singular behavior of open-arc solutions is
quite rich and interesting from a historical perspective: it includes
the early analysis~\cite{Sommerfeld},
corrections~\cite{BouwkampReview,BouwkampOnBethe} to early
contributions~\cite{MeixnerOld,Bethe}, the well-known finite-energy
condition introduced in~\cite{Meixner}, the integral equation
formulation~\cite{Maue} and subsequent treatments for integral
approaches for these problems, leading to the first regularity
proof~\cite{Stephan} and the comprehensive treatment~\cite{Costabel}
which establishes, in particular, that for $C^\infty$ open surfaces
with $C^\infty$ edges, the integral equation solution of the Dirichlet
(resp. Neumann) open-edge problems equals a $C^\infty$ function times
an unbounded (resp. bounded) canonical edge-singular function.)

As mentioned above, iterative solvers based on first-kind integral equations
often require large numbers of iterations and long computing times. Attempts
have been made over the years to obtain second-kind open-surface equations
and, indeed, second kind equations for open surfaces were developed
previously by exploiting the diagonal character of the logarithmic single
layer, at least for the case of the {\em Dirichlet problem for the Laplace
equation}~\cite{AtkinsonSloan,RokhlinJiang}. Unfortunately, as shown
in~\cite{BrunoLintner2}, direct generalization of such approaches to
high-frequency problems give rise to numbers of iterations that can in fact
be much larger than those inherent in first-kind formulations.  Efforts were
also made to obtain second kind equations on the basis of the well known
Calder\'on formula.  The Calder\'on identity relates the classical
single-layer and hypersingular operators $\mathbf{S}_c$ and $\mathbf{N}_c$
that are typically associated with the Dirichlet and Neumann problems on a
closed surface $\Gamma_c$: for such closed surfaces the Calder\'on formula
reads $\mathbf{N}_c\mathbf{S}_c = -\mathbf{I}/4 + \mathbf{K}_c$, where
$\mathbf{K}_c$ is a compact operator in a suitable Sobolev space.  Attempts
to extend this idea to open surfaces were pursued
in~\cite{PovznerSuharesvki,ChristiansenNedelec}. As first shown
in~\cite{PovznerSuharesvki}, there indeed exists a related identity for the
corresponding single-layer and hypersingular operators $\mathbf{N}$ and
$\mathbf{S}$ on an open surface $\Gamma$. As in the closed-surface case, we
have $\mathbf{N}\mathbf{S} = -\mathbf{I}/4 + \mathbf{K}$; unfortunately,
however, a useful functional setting for the operator $\mathbf{N}\mathbf{S}$
does not appear to exist ($\mathbf{K}$ is not compact). As shown in
Appendix~\ref{NSOne}, for example, the composition $\mathbf{N}\mathbf{S}$
maps the constant function $\varphi =1$ on an open surface into a function
that tends to infinity at the boundary of $\Gamma$ like $1/d$, where $d$
denotes the distance to the curve edge (see Appendix~\ref{NSOne}). In
particular, the formulation $\mathbf{N}\mathbf{S}$ cannot be placed in the
functional framework put forth
in~\cite{Stephan,StephanWendland,StephanWendland2} and embodied by
equations~\eqref{Smapping},~\eqref{Nmapping} and Definition~\ref{htil_def}
below: a function with $1/d$ edge asymptotics is not an element of
$H^{-\frac{1}{2}}(\Gamma)$.

In view of the aforementioned regularity
result~\cite{Costabel}---which, can be fully exploited numerically
through use of Chebyshev expansions~\cite{AtkinsonSloan,Monch} in
two-dimensions and appropriate extensions~\cite{BrunoLintner3} of the
high-order integration methods~\cite{BrunoKunyansky} in
three-dimensions---, and on account of the results of this paper, use
of the combination $N_\omega S_\omega$ enables low-iteration-number,
second-kind, super-algebraically accurate solution of open surface
scattering problems---and thus gives rise to a highly efficient
numerical solver for open surface scattering problems in two and three
dimensions; see~\cite{BrunoLintner2,BrunoLintner3}.

This paper is organized as follows. After introduction of necessary
notations and preliminaries, the main result of this paper,
Theorem~\ref{th_1}, is stated in Section~\ref{sec_2}.  Section~\ref{three}
contains the main elements of the theory leading to a proof of
Theorem~\ref{th_1}. Necessary uniqueness and regularity results for the
single-layer and hyper-singular weighted operators under a cosine change of
variables, which have mostly been known for a number of years
(see~\cite[Ch. 11]{VainikkoSaranen} and the extensive literature cited
therein) are presented in Section~\ref{SGeneral}; the inclusion of these
results renders our text essentially self contained, and it establishes a
direct link between our context and that represented by
references~\cite{Stephan,StephanWendland,StephanWendland2}. Building up on
constructions presented in earlier sections, the proof of Theorem~\ref{th_1}
is given in Section~\ref{five}. Two appendices complete our contribution:
Appendix~\ref{Nfact_appendix} presents a version adequate to our context of
a known expression linking the hypersingular operator and an
integro-differential operator containing only tangential derivatives;
Appendix~\ref{NSOne}, finally, demonstrates that the image of the
composition $\mathbf{N}\mathbf{S}$ of the un-weighted operators $\mathbf{N}$
and $\mathbf{S}$ is not contained in $H^{-\frac{1}{2}}$.

\section{Preliminaries\label{sec_2}}
Throughout this paper $\Gamma$ is assumed to be a smooth open arc in
two dimensional space.
\subsection{Background}
As is well
known~\cite{StephanWendland,StephanWendland2,VainikkoSaranen}, the
Dirichlet and Neumann boundary-value problems for the Helmholtz
equation
\begin{equation}\label{b_conds}
\left\{ \begin{array}{llll} \Delta u +k^2 u = 0 \quad\mbox{outside}\quad
  \Gamma , & u|_{\Gamma} = f, &f \in H^\frac{1}{2}(\Gamma) & \quad\mbox{(Dirichlet)}\\
\Delta v +k^2 v = 0 \quad\mbox{outside}\quad \Gamma, & \frac{\partial
  v}{\partial n}|_{\Gamma} = g,  & g\in H^{-\frac{1}{2}}(\Gamma) &\quad\mbox{(Neumann)}
\end{array}\right.
\end{equation}
admit unique radiating solutions $u,v\in
H^1_\text{loc}(\mathbb{R}^2\setminus \Gamma)$ which can be expressed
in terms of single- and double-layer potentials, respectively:
 \begin{equation}\label{single_l}
 u(\mathbf{r})= \int_\Gamma G_k(\mathbf{r},\mathbf{r}')\mu(\mathbf{r}')
 d\ell'
 \end{equation}
and
 \begin{equation}
 v(\mathbf{r})= \int_\Gamma \frac{\partial
 G_k(\mathbf{r},\mathbf{r}')}{\partial
 \textbf{n}_{\mathbf{r}'}}\nu(\mathbf{r}') d\ell' 
 \end{equation}
 for $\mathbf{r}$ outside $\Gamma$. Here $\textbf{n}_{\mathbf{r}'}$ is
 a unit vector normal to $\Gamma$ at the point $\mathbf{r}'\in\Gamma$
 (we assume, as we may, that $\textbf{n}_{\mathbf{r}'}$ is a smooth
 function of $\mathbf{r}'\in\Gamma$), and, letting $H_0^1$ denote the
 Hankel function, 
\begin{equation}\label{Gk_def}
G_k(\mathbf{r},\mathbf{r}')=\left\{\begin{array}{ll} \frac{i}{4}H_0^1(k
|\mathbf{r}-\mathbf{r}'|),& k > 0 \\
-\frac{1}{2\pi}\ln|\mathbf{r}-\mathbf{r}'|,&k=0 \end{array}\right. ,
\end{equation}
and
\begin{equation}
\frac{\partial G_k(\mathbf{r},\mathbf{r}')}{\partial
  \mathbf{n}_{\mathbf{r}'}}=\mathbf{n}_{\mathbf{r}'}\cdot\nabla_{\mathbf{r}'}G_k(\mathbf{r},\mathbf{r}').
\end{equation}

Denoting by $\mathbf{S}$ and $\mathbf{N}$ the single-layer and
hypersingular operators
\begin{equation}\label{Sdef}
 \mathbf{S}[\mu](\mathbf{r})=  \int_\Gamma
 G_k(\mathbf{r},\mathbf{r}')\mu(\mathbf{r}') d\ell'\quad , \quad
 \mathbf{r}\in\Gamma,
 \end{equation}
and
\begin{equation}\label{Ndef}
\begin{split}
  \mathbf{N}[\nu](\mathbf{r})= &\; \frac{\partial }{\partial
  \textbf{n}_{\mathbf{r}}}\int_\Gamma \frac{\partial
  G_k(\mathbf{r},\mathbf{r}')}{\partial
  \textbf{n}_{\mathbf{r}'}}\nu(\mathbf{r}') d\ell'\\
  \stackrel{\mathrm{def}}{=}&\lim\limits_{z\rightarrow 0^+} \frac{\partial
  }{\partial z}\int_\Gamma \frac{\partial
  G_k(\mathbf{r}+z\mathbf{n}_{\mathbf{r}},\mathbf{r}')}{\partial
  \textbf{n}_{\mathbf{r}'}}\nu(\mathbf{r}') d\ell'\quad , \quad
  \mathbf{r}\in\Gamma,
\end{split}
\end{equation}
the densities $\mu$ and $\nu$ are the unique solutions of the first kind
integral equations
\begin{equation}\label{Sbad}
\mathbf{S}[\mu]=f
\end{equation}
and
\begin{equation}\label{Nbad}
\mathbf{N}[\nu]=g.
\end{equation}
As shown in~\cite{Stephan,StephanWendland,StephanWendland2}, the operators
$\mathbf{S}$ and $\mathbf{N}$ define bounded and continuously invertible
mappings
\begin{equation}\label{Smapping}
  \mathbf{S}:\; \tilde{H}^{-\frac{1}{2}}(\Gamma) \rightarrow H^{\frac{1}{2}}(\Gamma),\quad\mbox{and}
\end{equation}
\begin{equation}\label{Nmapping}
  \mathbf{N}:\; \tilde{H}^{\frac{1}{2}}(\Gamma) \rightarrow H^{-\frac{1}{2}}(\Gamma),
\end{equation}
where for $s\in \mathbb{R}$, the space $\tilde{H}^s(\Gamma)$ is
defined below.
\begin{definition}\label{htil_def} 
  Let $G_1$ be a domain in the plane, with a smooth boundary $\dot G_1$, let
  $s\in\mathbb{R}$, and assume $\dot G_1$ contains the smooth open curve
  $\Gamma$.  The Sobolev space $\tilde{H}^{s}(\Gamma)$ is defined as the set
  of all elements $f\in H^{s}(\dot G_1)$ satisfying
  $\mathrm{supp}(f)\subseteq\overline\Gamma$.
\end{definition}
\begin{remark}\label{inv_is_cont} 
  As is well known~\cite[Corollary 2.7]{brezis2010functional}, the
  inverse $L^{-1}$ of a continuous and invertible (one-to-one and
  surjective) operator $L$ between two Banach spaces (and, in
  particular, between two Hilbert spaces such as the Sobolev spaces
  considered in this text) is also continuous. In view of this fact,
  above and throughout this text the terms ``invertible continuous
  operator'', ``invertible bounded operator'', ``bicontinuous
  operator'', ``continuous operator with continuous inverse'', etc,
  are used as interchangeable synonyms.
\end{remark}

The mapping results~\eqref{Smapping},~\eqref{Nmapping} provide an
extension of classical closed-surfaces results: for a closed Lipschitz
surface $\Gamma_c$ and for any $s\in\mathbb{R}$, the closed-surface
single-layer and hypersingular operators $\mathbf{S}_c$ and
$\mathbf{N}_c$ define bounded mappings
\begin{equation}\label{ScmappingHs}
  \mathbf{S}_c:\; H^{s}(\Gamma_c) \rightarrow H^{s+1}(\Gamma_c), 
\end{equation}
\begin{equation}\label{NcmappingHs}
  \mathbf{N}_c:\; H^{s+1}(\Gamma_c) \rightarrow H^{s}(\Gamma_c),
\end{equation}
see e.g.~\cite{Costabel_Old,Nedelec,Kress}. Additionally, the
closed-surface potentials satisfy the classical Calder\'on relation
\begin{equation}\label{Calderon}
  \mathbf{N}_c\mathbf{S}_c= -\frac{\mathbf{I}}{4} +\mathbf{K}_c
\end{equation}
in $H^{s}(\Gamma_c)$, where $\mathbf{K}_c$ is a compact operator.

While~\eqref{ScmappingHs} and~\eqref{NcmappingHs} do not apply to open
surfaces, the solutions $\mu$ and $\nu$ of the open-surface integral
equations~\eqref{Sbad} and~\eqref{Nbad} enjoy significant regularity
properties.  In particular, letting $d=d(\mathbf{r})$ denote any
non-negative smooth function defined on $\Gamma$ which for
$\mathbf{r}$ in a neighborhood of each end-point equals the Euclidean
distance from $\mathbf{r}$ to the corresponding end-point, and letting
$\omega$ denote any function defined on $\Gamma$ such that
$\omega/\sqrt{d}$ is $C^\infty$ up to the endpoints, the recent
results~\cite{Costabel} establish that if the arc $\Gamma$ and the
right-hand-side functions $f$ and $g$ in~(\ref{Sbad}) and~(\ref{Nbad})
are infinitely differentiable we have
\begin{equation}\label{CostabelExp1}
  \mu = \frac{\alpha}{\omega}
\end{equation}
and
\begin{equation}\label{CostabelExp2}
  \nu = \beta\cdot\omega,
\end{equation}
where $\alpha$ and $\beta$ are $C^\infty$ functions throughout
$\Gamma$. The singular behavior in these solutions is thus fully
characterized by the factors $d^{1/2}$ and $d^{-1/2}$ in
equations~\eqref{CostabelExp1} and~\eqref{CostabelExp2}, respectively.

\subsection{Generalized Calder\'on Formula\label{well_cond}}

In view of equations~\eqref{CostabelExp1} and~\eqref{CostabelExp2},
for any non-vanishing function $\omega(\mathbf{r})>0$ such that $\omega/\sqrt{d}$ is
$C^\infty$ up to the endpoints of $\Gamma$, we define the weighted operators
\begin{equation}\label{Somega}
\mathbf{S}_\omega[\alpha]=  \mathbf{S}\left[\frac{\alpha}{\omega}\right]
\end{equation}
and
\begin{equation}\label{Nomega}
\mathbf{N}_\omega[\beta] = \mathbf{N}\left[\beta\cdot\omega\right],
\end{equation}
and we consider the weighted versions
\begin{equation}\label{Sgood}
\mathbf{S}_\omega[\alpha]= f
\end{equation}
and
\begin{equation}\label{Ngood}
\mathbf{N}_\omega[\beta] = g
\end{equation}
of the integral equations~\eqref{Sbad} and~\eqref{Nbad}; clearly, in
view of the discussion of the previous section, for smooth $\Gamma$
and smooth right-hand-sides $f$ and $g$, the solutions $\alpha$ and
$\beta$ of~\eqref{Sgood} and~\eqref{Ngood} are smooth up to the
endpoints of $\Gamma$.

Without loss of generality we use a smooth parametrization
$\mathbf{r}(t)=\left(x(t),y(t)\right)$ of $\Gamma$ defined in the interval
$[-1,1]$, for which $\tau(t)=|\frac{d \textbf{r}(t)}{dt}|$ is never zero.
For definiteness and simplicity, throughout the rest of the paper we select $\omega$, as we
may, in such a way that
\begin{equation}\label{canon_sq}
\omega(\mathbf{r}(t))= \sqrt{1-t^2}. 
\end{equation}
The operators $\mathbf{S}_\omega$ and $\mathbf{N}_\omega$ thus induce the
parameter-space operators
\begin{equation}\label{sOmegaDef}
S_\omega[\varphi](t)=\int_{-1}^1 G_k\left(\mathbf{r}(t),\mathbf{r}(t')\right)\frac{
\varphi(t')}{\sqrt{1-t'^2}}\;\tau(t') dt',
\end{equation}
and
\begin{equation}\label{NOmegaDef}
N_\omega[\psi](t)=\lim\limits_{z\rightarrow
0^+}\frac{\partial}{\partial z}\int_{-1}^1 
\frac{\partial}{\partial
\textbf{n}_{\mathbf{r}(t')}}G_k\left(\mathbf{r}(t)+z\textbf{n}_{\mathbf{r}(t)},\mathbf{r}(t')\right)
\psi (t') \tau(t')\sqrt{1-t'^2}dt';
\end{equation}
defined on functions $\varphi$ and $\psi$ of the variable $t$, $-1\leq
t\leq 1$; clearly, for $\varphi(t) = \alpha(\mathbf{r}(t))$ and $\psi(t)
= \beta(\mathbf{r}(t))$ we have
\begin{equation}\label{S_param}
\mathbf{S}_\omega[\alpha](\mathbf{r}(t)) = S_\omega[\varphi](t)
\end{equation}
and
\begin{equation}\label{N_param}
\mathbf{N}_\omega[\beta](\mathbf{r}(t)) = N_\omega[\psi](t).
\end{equation}

In order to proceed we further transform our integral operators: using
the changes of variables $t=\cos\theta$ and $t'=\cos\theta'$ and,
defining $\textbf{n}_\theta= \textbf{n}_{\mathbf{r}(\cos\theta)}$ and
using~\eqref{S_param} and~\eqref{N_param}, we re-express
equations~\eqref{Sgood} and~\eqref{Ngood} in the forms
\begin{equation}\label{Stilde}
\tilde{S}[\tilde{\varphi}]= \tilde f
\end{equation}
and
\begin{equation}\label{Ntilde}
\tilde{N}[\tilde{\psi}] = \tilde g,
\end{equation}
where $\tilde{S}$ and $\tilde{N}$ denote the operators
\begin{equation}\label{ssdef}
  \tilde{S}[\gamma](\theta)=\int_{0}^\pi
  G_k(\mathbf{r}(\cos\theta),\mathbf{r}(\cos\theta'))\gamma(\theta')\tau(
  \cos\theta')d\theta'
\end{equation}
and
\begin{equation}\label{NNdef}
  \tilde{N}[\gamma](\theta)=\lim\limits_{z\rightarrow
    0^+}\frac{\partial}{\partial z}\int_{0}^\pi
  \frac{\partial}{\partial
    \textbf{n}_{\theta'}}G_k(\mathbf{r}(\cos\theta)+z\textbf{n}_{\theta},\mathbf{r}(\cos\theta'))\gamma(\theta')
  \tau(\cos\theta')\sin^2\theta'd\theta',
\end{equation}
and where
\begin{equation}\label{fg_theta}
  \tilde f(\theta) = f(\mathbf{r}(\cos\theta))\quad,\quad \tilde g(\theta) = g(\mathbf{r}(\cos\theta));
\end{equation}
clearly, the solutions of equations~\eqref{sOmegaDef}-\eqref{Ntilde}
are related by
\begin{equation}\label{phi_psi_theta}
  \tilde \varphi(\theta) = \varphi(\cos\theta)\quad,\quad \tilde \psi(\theta)
  = \psi(\cos\theta).
\end{equation}

In view of the symmetries induced by the $\cos\theta$ dependence in
equations~\eqref{ssdef} through \eqref{fg_theta}, it is natural to study the
properties of these operators and equations in appropriate Sobolev spaces
$H^s_e(2\pi)$ of $2 \pi$ periodic and even functions defined below; cf.
\cite{YanSloan,BrunoHaslam}.
\begin{definition}
  Let $s\in \mathbb{R}$. The Sobolev space $H^s_e(2\pi)$ is defined as
  the completion of the space 
  of infinitely differentiable $2\pi$-periodic and even functions
  defined in the real line with respect to the norm
\begin{equation}
\|v\|_{H_s^e(2\pi)}^2 = |a_0|^2 + 2\sum\limits_{m=1}^\infty m^{2s}|a_m|^2 ,
\end{equation}
where $a_m$ denotes the $m$-th cosine coefficient of $v$:
\begin{equation}
v(\theta)=\frac{1}{2}a_0 + \sum\limits_{m=1}^\infty a_m \cos( m\theta ).
\end{equation}
\end{definition}
Clearly the set $\{\cos(n\theta):n\in \mathbb{N} \}$ is a basis of the
Hilbert space $H^s_e(2\pi)$ for all $s$. 

For notational convenience we also introduce corresponding discrete
sequence spaces $h^s$, $s\geq 0$ and $\ell^2$.
\begin{definition}
  Let $s\geq 0$. The Hilbert space $h^s$ is defined as the space of
  all sequences $a=(a_n)_{n\in\mathbb{N}} $ of complex numbers with
  finite norm $||a||_{h^s}<\infty$, with the discrete $s$-norm
  $||\cdot ||_{h^s}$ defined by
\begin{equation}
  ||a||^2_{h^s} = |a_0|^2+2\sum\limits_{n=1}^\infty |a_n|^2 n^{2s}.
\end{equation} 
and with the natural associated scalar product. We also define $\ell^2
= h^0$.
\end{definition}

The main purpose of this paper is to establish the following theorem.
\begin{theorem}\label{NStheorem}\label{th_1}
  The composition $\tilde{N}\tilde{S}$ defines a bicontinuous operator
  from $H^s_e(2\pi)$ to $H^s_e(2\pi)$ for all $s>0$. Further, this
  operator satisfies a generalized Calder\'on formula
\begin{equation}\label{NSfact}
\tilde{N}\tilde{S}= \tilde J^\tau_0 +\tilde K,
\end{equation}
where ${\tilde K}: H^s_e(2\pi) \rightarrow H^s_e(2\pi)$ is a compact
operator, and where $\tilde J^\tau_0: H^s_e(2\pi) \rightarrow
H^s_e(2\pi)$ is a bicontinuous operator, independent of $k$, with
point spectrum equal to the union of the discrete set
$\Lambda_\infty=\{\lambda_0=-\frac{\ln2}{4},\quad \lambda_n=-\frac{1}{4}-\frac{1}{4n}:n>0\}$ and a certain open set set $\Lambda_s$ which is bounded
away from zero and infinity. The sets $\Lambda_s$ are nested, they
form a decreasing sequence, and they satisfy
$\bigcap_{s>0}\bar{\Lambda}_s=\{-\frac{1}{4}\}$, where
$\bar{\Lambda}_s$ denotes the closure of $\Lambda_s$.  In addition,
the operators
\begin{equation}
  \tilde{S}:\; H^s_e(2\pi) \rightarrow H^{s+1}_e(2\pi)\quad\mbox{and}
\end{equation}
\begin{equation}\label{Ndomain_3}
\tilde{N}:\; H^{s+1}_e(2\pi) \rightarrow H^{s}_e(2\pi)
\end{equation}
are bicontinuous.
\end{theorem}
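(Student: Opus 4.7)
The plan is to build the proof in four layers that mirror the three ingredients announced in the abstract (weighted algebra, Cesàro continuity, explicit point-spectrum calculation) plus the preliminary mapping results.

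First I would establish the bicontinuity of $\tilde S$ and $\tilde N$ between the appropriate $H^s_e(2\pi)$ spaces. The natural strategy is to reduce to the Laplace ($k=0$) case: after the cosine change of variable $t=\cos\theta$, the logarithmic single-layer kernel has the classical Symm-type diagonal representation on the Chebyshev/cosine basis (the $\cos n\theta$'s are essentially eigenfunctions of the principal part with eigenvalues of order $1/n$), which immediately yields the $H^s_e \to H^{s+1}_e$ mapping for a model operator. The full operator $\tilde S$ differs from this model by a kernel that is smooth in $(\theta,\theta')$ on the torus (the difference between $H_0^1$ and $\ln$ plus the parametrization factor $\tau$), so the difference is a smoothing, hence bounded, contribution. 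The analogous statement for $\tilde N$ follows either by the same Fourier-series analysis after invoking Appendix \ref{Nfact_appendix}, which replaces $\tilde N$ by a tangential-derivative-plus-single-layer composition, or by appealing directly to Section \ref{SGeneral}. These mapping properties, combined with Remark \ref{inv_is_cont} and the invertibility statements \eqref{Smapping}--\eqref{Nmapping}, yield the bicontinuity claims at the end of the theorem.

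Second, I would compose $\tilde N$ and $\tilde S$ by first rewriting $\tilde N$ through the factorization of Appendix \ref{Nfact_appendix}. Schematically this produces $\tilde N = D_\theta \circ \tilde T \circ D_\theta + (\text{lower order})$, where $D_\theta$ is essentially $\partial_\theta$ (tangential derivative after the cosine change of variables) and $\tilde T$ is a logarithmic-type operator. The crucial algebraic step is that the weight $\sqrt{1-t'^2}=\sin\theta'$ sitting inside $\tilde N$ in \eqref{NNdef} can be absorbed into the differentiation so that the composition $\tilde N\tilde S$ rearranges into a product of two single-layer-type operators sandwiching a derivative, with no boundary terms because $\sin\theta'$ vanishes at $\theta'=0,\pi$. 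It is this cancellation of endpoint contributions, unavailable in the unweighted $\mathbf{N}\mathbf{S}$ (cf.\ Appendix \ref{NSOne}), that lets the composition land in $H^s_e(2\pi)$ rather than blowing up as $1/d$ at the edges.

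Third, I would extract the principal $k$-independent piece. Replacing every Hankel kernel by its leading $-\frac{1}{2\pi}\ln|\cdot|$ part introduces only kernels that are $C^\infty$ on the periodic square, hence compact as operators between any $H^s_e$ spaces; this places all $k$-dependence inside $\tilde K$. What remains, $\tilde J^\tau_0$, is a fully explicit Fourier multiplier-like operator on the cosine basis once the logarithmic kernel is expanded via $-\ln|2\sin\tfrac{\theta-\theta'}{2}|=\sum_{m\ge 1}\cos m(\theta-\theta')/m$ (and the analogous identity for the cross term). Here is where continuity of the Cesàro operator enters: the product of two diagonal operators with eigenvalues $\sim 1/n$, conjugated by a derivative of order one, produces coefficient sequences of the form $\frac{1}{n}\sum_{m\le n}a_m$, i.e.\ Cesàro averages; classical continuity of this operator on $\ell^2$ (and its $h^s$ analogues) is exactly what is needed to show $\tilde J^\tau_0$ is bounded on $H^s_e(2\pi)$ for every $s>0$, with a bounded inverse obtainable by explicit inversion of the symbol.

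Finally, for the point spectrum I would compute $\tilde J^\tau_0[\cos n\theta]$ directly. A straightforward calculation with the series expansions above gives the discrete eigenvalues $\lambda_0=-\frac{\ln 2}{4}$ and $\lambda_n=-\frac14-\frac{1}{4n}$, producing $\Lambda_\infty$; these accumulate at $-\frac14$, matching the closed-surface Calderón constant. The continuous part $\Lambda_s$ arises from the fact that the Cesàro-like piece of $\tilde J^\tau_0$ is not normal on $H^s_e$, so it has an open set of eigenvalues whose size depends on $s$ via the weight $n^{2s}$ in the norm; one shows the $\Lambda_s$ are nested and, as $s$ increases, the Cesàro averaging forces any genuine eigenvalue off the diagonal to concentrate near $-\frac14$, giving $\bigcap_{s>0}\bar\Lambda_s=\{-\frac14\}$. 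The main obstacle I expect is the third step: the bookkeeping that separates the Cesàro-type principal symbol from genuinely smoothing remainders after composition, and proving continuous invertibility of $\tilde J^\tau_0$ on every $H^s_e$ despite its non-normality. The explicit spectral description is delicate but, once the Fourier-side formula for $\tilde J^\tau_0$ is in hand, reduces to a concrete eigenvalue problem for an infinite matrix of Cesàro type.
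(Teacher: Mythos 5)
Your proposal reproduces the paper's overall architecture (flat-arc zero-frequency model operators as the principal part, a compact remainder, Ces\`aro continuity for boundedness, explicit spectral computation), but several load-bearing steps are missing or would fail as written. First, the remainders are not infinitely smoothing: the kernels of $\tilde S-\tilde S_0^\tau$ and of the $k$-dependent corrections retain factors of the form $(t-t')^2\ln|t-t'|$ coming from $J_0(z)\ln z$, so they are \emph{not} $C^\infty$ on the periodic square; compactness has to be extracted from a finite gain of regularity (the paper proves $\tilde S-\tilde S_0^\tau:H^s_e\to H^{s+3}_e$ and $\tilde N-\tilde N_0^\tau:H^{s+1}_e\to H^{s+1}_e$, one order of gain being enough for compactness on a fixed $H^s_e$). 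Second, and more seriously, the bicontinuity of $\tilde S$ and $\tilde N$ does not follow ``directly'' from \eqref{Smapping}--\eqref{Nmapping} together with Remark~\ref{inv_is_cont}: those statements concern $\tilde H^{\mp\frac12}(\Gamma)$ and $H^{\pm\frac12}(\Gamma)$, not $H^s_e(2\pi)$. One must (a) transfer injectivity, which requires showing that a density in $H^s_e(2\pi)$ corresponds, after the weight and the cosine change of variables, to an element of $\tilde H^{-\frac12}(\Gamma)$ (resp.\ $\tilde H^{\frac12}(\Gamma)$) --- the content of the Bessel-function Lemma~\ref{LemmaStephanSpace0} and its corollaries --- and (b) obtain surjectivity by the Fredholm argument for $I+(\tilde S_0^\tau)^{-1}(\tilde S-\tilde S_0^\tau)$, since the $\tilde H^{-\frac12}$ solution supplied by \eqref{Smapping} is not known a priori to have the weighted $H^s_e$ regularity. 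Your sketch omits both ingredients.

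The treatment of $\tilde J_0^\tau$ has the same character of gap. Inversion ``of the symbol'' is not available: in the cosine basis $\tilde J_0$ is upper triangular with a non-compact Ces\`aro-type off-diagonal part (decomposition~\eqref{Jfact}), so neither diagonal inversion nor Fredholm theory applies --- this is exactly the point of Remark~\ref{rem_2}. The paper instead constructs a two-sided inverse explicitly, $\tilde J_0^{-1}=-4\,\tilde S_0^{-1}\tilde C\tilde S_0\tilde T_0$, exploiting the identities $\tilde C\tilde T_0=I$ and $\tilde D_0=-\frac14\tilde C\bigl(\tilde S_0^{-1}\bigr)^2$; some mechanism of this kind is indispensable and is absent from your argument (you flag the difficulty but offer no route around it). Likewise, non-normality alone does not produce the open eigenvalue set: $\Lambda_s$ comes from the explicit recursion for the coefficients of a putative eigenvector and the asymptotics $|f_{2n}|\asymp n^{-2x/(x^2+y^2)}$ (with $z=8\lambda+2=-x+iy$), which give precisely the membership condition in \eqref{lambda_s}; without that computation the nestedness and $\bigcap_{s>0}\bar\Lambda_s=\{-\frac14\}$ remain unproved. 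Finally, $\tilde J_0^\tau$ should be handled by the conjugation $\tilde J_0^\tau=\tilde Z_0^{-1}\tilde J_0\tilde Z_0$ rather than by evaluating it on $\cos n\theta$ directly, since the presence of $\tau$ makes that action non-explicit.
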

We thus see that, through introduction of the weight $\omega$ and use
of spaces of even and $2\pi$ periodic functions, a picture emerges for
the open-surface case that resembles closely the one found for
closed-surface configurations: the generalized Calder\'on
relation~\eqref{NSfact} is analogous to the Calder\'on
formula~\eqref{Calderon}, and mapping properties in terms of the
complete range of Sobolev spaces are recovered for $\tilde{S}$ and
$\tilde{N}$, in a close analogy to the framework embodied by
equations~\eqref{ScmappingHs} and~\eqref{NcmappingHs}.

In the remainder of this paper we present a proof of
Theorem~\ref{NStheorem}.  This proof is based on a number of elements,
the first one of which, presented in Section~\ref{three}, concerns the
operator $\tilde J^\tau_0$ in~\eqref{NSfact}---which corresponds, in
fact, to the zero-frequency/straight-arc version of
Theorem~\ref{NStheorem}.

\section{Straight arc at zero frequency: operators $\tilde J_0$ and $\tilde J^\tau_0$\label{three}}

\subsection{Preliminary properties of the operators $\tilde{S}_0$, $\tilde{N}_0$ and other related operators\label{prelim_prop}}
In the case in which $\Gamma$ is the straight-arc $[-1,1]$ and $k=0$,
$\tilde S$ reduces to Symm's operator~\cite{YanSloan,BrunoHaslam}
\begin{equation}\label{S_0}
\tilde{S}_0 [\tilde \varphi] (\theta ) = -\frac{1}{2\pi}\int_0^\pi
\ln|\cos\theta-\cos\theta'| \tilde \varphi( \theta ) d\theta,
\end{equation}
for which the following lemma holds
\begin{lemma}\label{lemma1} The operator $\tilde{S}_0$ maps
  $H^s_e(2\pi)$ into $H^{s+1}_e(2\pi)$, and
\begin{equation}\label{S0reg}
  \tilde{S}_0:\quad H^s_e(2\pi) \rightarrow H^{s+1}_e(2\pi)\quad \mbox{is bicontinuous for all }s\geq 0.
\end{equation}\end{lemma}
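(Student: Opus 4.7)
The plan is to diagonalize $\tilde{S}_0$ in the cosine basis of $H^s_e(2\pi)$ and read off continuity and bicontinuity directly from the multiplier sequence. Specifically, I would invoke the classical integral identity
\begin{equation*}
-\frac{1}{2\pi}\int_0^\pi \ln|\cos\theta-\cos\theta'|\,\cos(n\theta')\,d\theta' = \begin{cases} \dfrac{\ln 2}{2}, & n=0,\\[4pt] \dfrac{\cos(n\theta)}{2n}, & n\geq 1,\end{cases}
\end{equation*}
which follows (by the change of variables $t'=\cos\theta'$) from the well-known Chebyshev-weighted logarithmic identity for Symm's operator on $[-1,1]$. This identity states that $\tilde S_0$ is diagonal on the cosine basis $\{\cos(n\theta)\}_{n\geq 0}$, with eigenvalue $\tfrac{\ln 2}{2}$ for $n=0$ and $\tfrac{1}{2n}$ for $n\geq 1$. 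I would cite this from Yan--Sloan or Saranen--Vainikko rather than re-derive it.

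Once the diagonalization is in hand, the rest is a direct norm computation. Writing $v=\tfrac{a_0}{2}+\sum_{m\geq 1}a_m\cos(m\theta)$, the image $\tilde S_0 v$ has cosine coefficients $b_0=\tfrac{\ln 2}{2}\,a_0$ and $b_m=\tfrac{a_m}{2m}$ for $m\geq 1$. Substituting into the defining norm of $H^{s+1}_e(2\pi)$ gives
\begin{equation*}
\|\tilde S_0 v\|_{H^{s+1}_e(2\pi)}^2 \;=\; \tfrac{(\ln 2)^2}{4}|a_0|^2 \;+\; \tfrac{1}{2}\sum_{m\geq 1} m^{2s}|a_m|^2,
\end{equation*}
which is equivalent, with explicit two-sided constants independent of $v$ and of $s\geq 0$, to $\|v\|_{H^s_e(2\pi)}^2=|a_0|^2+2\sum_{m\geq 1}m^{2s}|a_m|^2$. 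This equivalence establishes both boundedness $H^s_e(2\pi)\to H^{s+1}_e(2\pi)$ and bicontinuity: the multipliers $\tfrac{\ln 2}{2}$ and $\tfrac{1}{2m}$ are all nonzero, and the factor $\tfrac{1}{m}$ exactly absorbs the unit shift in regularity.

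There is essentially no substantial obstacle: the diagonalization is classical, and once it is accepted, bicontinuity is mechanical. The only mild care needed is the handling of the zeroth mode, whose multiplier $\tfrac{\ln 2}{2}$ is not $\tfrac{1}{0}$ but a genuine nonzero constant, so that the constant function is not annihilated; this is what ensures injectivity and keeps the inverse bounded on the full space (not merely on the mean-zero subspace). With this in place, Lemma~\ref{lemma1} follows immediately, and the same diagonalization will later be the quantitative input into the analysis of $\tilde N_0\tilde S_0$ and of the operator $\tilde J_0^\tau$ in the generalized Calder\'on formula.
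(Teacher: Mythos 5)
Your proposal is correct and follows essentially the same route as the paper: both rest on the classical diagonal identity $\tilde S_0[e_n]=\lambda_n e_n$ (with $\lambda_0=\tfrac{\ln 2}{2}$, $\lambda_n=\tfrac{1}{2n}$ for $n\geq 1$) in the cosine basis, from which the unit shift in Sobolev index and the two-sided norm bounds are read off directly. The only point the paper makes explicit that you pass over silently is the justification that $\tilde S_0$ acts term-by-term on an infinite cosine series---i.e., that your coefficient formula for $\tilde S_0 v$ holds for general $v\in H^s_e(2\pi)$ and not merely for finite linear combinations of the $e_n$---which the paper secures by first observing that the weakly singular logarithmic kernel makes $\tilde S_0$ bounded on $H^0_e(2\pi)$ and then identifying $\tilde S_0$ with the diagonal operator by density.
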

\begin{proof}
  It follows from the weak singularity of the kernel in
  equation~\eqref{S_0} that
\begin{equation}\label{S0cont}
  \tilde{S}_0:\quad H^0_e(2\pi) \rightarrow H^{0}_e(2\pi)\quad \mbox{is a continuous operator}. 
\end{equation}
Furthermore, taking into account the well documented diagonal
property~\cite{MasonHandscomb}
\begin{equation}\label{S0Diag}
\tilde{S_0}[e_n] = \lambda_n e_n, \quad \lambda_n= \left\{
\begin{array}{cc}\frac{\ln 2}{2}& n=0\\ \frac{1}{2n}, & n\geq 1
\end{array} \right.
\end{equation}
of Symm's operator in the basis $\{e_n: n\geq 0\}$ of $H^s_e(2\pi)$
defined by 
\begin{equation}\label{basis}
  e_n(\theta) = \cos n\theta,
\end{equation} 
we see that, for every basis element $e_n, n\geq 0$, the operator
$\tilde{S}_0$ coincides with the diagonal operator defined by
 \begin{equation}
   W[f]=\sum\limits_{n\geq 0}\lambda_n f_n e_n \quad \mbox{for} \quad f =\sum\limits_{n\geq0} f_n e_n \in H^s_e(2\pi).
\end{equation}
Clearly the operator $W:\quad H^s_e(2\pi) \rightarrow H^{s+1}_e(2\pi)$
is bicontinuous for all $s\geq 0$, and it is in particular a
continuous operator from $H^0_e(2\pi)$ into $H^0_e(2\pi)$. The
continuous operators $\tilde{S}_0$ and $W$ thus coincide on the dense
set $\{e_n \}$ of $H^0_e(2\pi)$, and they are therefore equal
throughout $H^0_e(2\pi)$. It follows that $\tilde{S}_0 = W$ maps
$H^s_e(2\pi)$ into $H^{s+1}_e(2\pi)$ bicontinuously, and the proof is
complete.
\end{proof}

The corresponding zero-frequency straight-arc version $\tilde{N}_0$ of the
operator $\tilde N$, in turn, is given by
\begin{equation}\label{N0def0}
\tilde{N_0}[\tilde{\psi}](\theta)=\frac{1}{4\pi}\lim\limits_{z\rightarrow
0}\frac{\partial^2}{\partial z^2}\int_{0}^\pi \ln| (\cos\theta -
\cos\theta')^2 + z^2 | \tilde{\psi}(\theta')\sin^2\theta' d\theta',
\end{equation}
which, following~\cite{Kress,ColtonKress1,Monch} we express in the form 
\begin{equation}\label{N0Def}
\tilde{N}_0 = \tilde D_0 \tilde{S}_0 \tilde T_0
\end{equation}
where
\begin{equation}\label{D0Def}
\tilde D_0[\tilde \varphi](\theta) = \frac{1}{\sin\theta}\frac{d\tilde
\varphi (\theta)}{d\theta}
\end{equation} and
\begin{equation}\label{T0Def}
\tilde T_0[\tilde \varphi](\theta) = \frac{d}{d
\theta}\left(\tilde\varphi(\theta) \sin\theta\right).
\end{equation}
(The general curved-arc arbitrary-frequency version of this relation
is presented in Lemma~\ref{Nfact_lemma} below and, for the sake of
completeness, a derivation of the general relation is provided in
Appendix~\ref{Nfact_appendix}.)

Note that, in contrast with the closed-arc case~\cite[p. 117]{Kress},
the expressions~\eqref{N0Def} through~\eqref{T0Def} contain the
vanishing factor $\sin\theta$ and the singular factor $1/\sin\theta$;
in particular, for example, it is not immediately clear that the
operator $\tilde{N}_0$ maps $H^{s+1}_e(2\pi)$ into
$H^{s}_e(2\pi)$. This result is presented in
Corollary~\ref{coro-3}. In preparation for our proofs of that and
other straight-arc zero-frequency results in the following sections,
in the remainder of this section we establish a preliminary continuity
result for the operator $\tilde{D}_0$.
\begin{lemma}\label{D0lemma}
  The operator $\tilde{D}_0$ defines a bounded mapping from
  $H^2_e(2\pi)$ into $H^0_e(2\pi)$.
\end{lemma}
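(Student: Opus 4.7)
The approach is to work explicitly with the cosine Fourier expansion $\tilde\varphi(\theta) = \frac{1}{2}a_0 + \sum_{m \geq 1} a_m \cos(m\theta)$ of the input, for which the norm identity $\|\tilde\varphi\|_{H^2_e(2\pi)}^2 = |a_0|^2 + 2\sum_{m \geq 1} m^4 |a_m|^2$ holds. Term-by-term differentiation yields $\tilde\varphi'(\theta) = -\sum_{m\geq 1} m a_m \sin(m\theta)$, and the operator $\tilde{D}_0[\tilde\varphi](\theta) = \tilde\varphi'(\theta)/\sin\theta$ can therefore be computed termwise once each factor $\sin(m\theta)/\sin\theta$ has been expanded in cosines. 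For this expansion I would use the classical identities
\[
\frac{\sin(2\ell\theta)}{\sin\theta} = 2\sum_{k=0}^{\ell-1} \cos((2k+1)\theta), \qquad \frac{\sin((2\ell+1)\theta)}{\sin\theta} = 1 + 2\sum_{k=1}^{\ell} \cos(2k\theta),
\]
which express $\sin(m\theta)/\sin\theta$ as a finite sum of at most $m$ cosines with coefficients of absolute value at most two.

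After substituting these identities into the series for $\tilde{D}_0[\tilde\varphi]$ and interchanging the order of summation, one finds that the cosine coefficients $b_j$ of $\tilde{D}_0[\tilde\varphi]$ are the tail sums
\[
b_{2k+1} = -4\sum_{\ell \geq k+1}\ell\, a_{2\ell}, \qquad b_{2k} = -2\sum_{\ell \geq k}(2\ell+1)\,a_{2\ell+1} \quad (k\geq 1),
\]
together with an analogous formula for $b_0$. The boundedness estimate $\|\tilde{D}_0[\tilde\varphi]\|_{H^0_e(2\pi)}^2 = |b_0|^2 + 2\sum_{j\geq 1}|b_j|^2 \leq C\|\tilde\varphi\|^2_{H^2_e(2\pi)}$ thus reduces, after treating the even and odd Fourier modes of $\tilde\varphi$ separately, to a discrete Hardy-type inequality of the form
\[
\sum_{k\geq 0}\Bigl|\sum_{\ell > k}\ell\, c_\ell\Bigr|^2 \leq C\sum_{\ell \geq 1}\ell^4 |c_\ell|^2.
\]

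This inequality I would prove directly via Cauchy--Schwarz with an auxiliary weight $\ell^{-2\alpha}$ for some $\alpha\in(1/2,1)$: the estimate
\[
\Bigl|\sum_{\ell > k}\ell\, c_\ell\Bigr|^2 \leq \Bigl(\sum_{\ell > k}\ell^{-2\alpha}\Bigr)\Bigl(\sum_{\ell > k}\ell^{2\alpha+2}|c_\ell|^2\Bigr) \leq C(k+1)^{1-2\alpha}\sum_{\ell > k}\ell^{2\alpha+2}|c_\ell|^2
\]
followed by summation in $k$ and interchange of the order of summation (using $\sum_{k=0}^{\ell-1}(k+1)^{1-2\alpha}\leq C\ell^{2-2\alpha}$, valid since $\alpha<1$) produces exactly $C\sum_\ell \ell^4|c_\ell|^2$. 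The main obstacle, beyond the elementary bookkeeping of the even/odd decomposition, is the calibration of the Cauchy--Schwarz weight $\alpha$: it must lie strictly between $1/2$, so that the tail $\sum_{\ell>k}\ell^{-2\alpha}$ is summable and controls $(k+1)^{1-2\alpha}$, and $1$, so that the partial sum $\sum_{k<\ell}(k+1)^{1-2\alpha}$ grows no faster than $\ell^{2-2\alpha}$; only within this window do the exponents combine to yield precisely the weight $\ell^4$ of the $H^2_e(2\pi)$ norm.
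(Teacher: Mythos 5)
Your proposal is correct in substance, and its key analytic step differs genuinely from the paper's. Both arguments start identically: expand in the cosine basis, use the expansion of $\sin(m\theta)/\sin\theta$ as a finite cosine sum (the paper's equation~\eqref{Un}), and read off that the cosine coefficients of $\tilde D_0[\tilde\varphi]$ are tail sums of the form $\sum_{\ell>k}\ell\,a_{2\ell}$ (and the odd analogue). At that point the paper recognizes these tails as $C^*[g](k)$ for the weighted sequence $g_p=2p(p+1)\tilde\varphi_{2p}$, where $C^*$ is the adjoint of the discrete Ces\`aro operator, and cites its $\ell^2$-boundedness from Brown--Halmos--Shields; you instead prove the equivalent discrete Hardy-type inequality $\sum_{k}\bigl|\sum_{\ell>k}\ell c_\ell\bigr|^2\le C\sum_\ell \ell^4|c_\ell|^2$ directly, via Cauchy--Schwarz with the weight $\ell^{-2\alpha}$, $\alpha\in(\tfrac12,1)$, and an interchange of the order of summation; your exponent bookkeeping ($\sum_{\ell>k}\ell^{-2\alpha}\le C(k+1)^{1-2\alpha}$, $\sum_{j\le\ell}j^{1-2\alpha}\le C\ell^{2-2\alpha}$) is correct and the weights do combine to $\ell^4$. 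What your route buys is a self-contained, elementary proof of exactly the estimate needed, with no external operator-theoretic citation; what the paper's route buys is the explicit link to the Ces\`aro operator, which reappears later (the continuous Ces\`aro operator in Lemma~\ref{Cprop}, and the non-compactness discussion), and---importantly---a truncation-plus-Cauchy-sequence argument that supplies the rigor for general $H^2_e(2\pi)$ data. That last point is the one place you should tighten your write-up: for a general $\tilde\varphi\in H^2_e(2\pi)$ the double series you interchange is not absolutely summable (it would require roughly $\sum_\ell \ell^2|a_{2\ell}|<\infty$, which $H^2$ does not guarantee), so the termwise identification of the coefficients of $\tilde D_0[\tilde\varphi]$ should be established first for finite trigonometric polynomials---where your estimate is uniform---and then extended by the same density/Cauchy-sequence device the paper uses, noting that truncations converge to $\tilde\varphi$ in $H^2$ and hence $\tilde D_0$ of the truncations converges pointwise on compact subsets of $(0,\pi)$ to $\tilde D_0[\tilde\varphi]$. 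This is a routine patch, not a flaw in the approach.
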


\begin{proof}
  Let $\tilde\varphi=\sum_{n=0}^\infty \tilde\varphi_n e_n$ be an
  element of $H^2_e(2\pi)$. We assume at first that
  $\tilde\varphi_{2p+1} = 0$ for all integers $p\geq 0$. Let $P>0$ and
  $\tilde\varphi^P=\sum_{p=0}^P\tilde\varphi_{2p}e_{2p}$; clearly
  $\tilde\varphi^P$ converges to $\tilde\varphi$ in $H^2_e(2\pi)$ as
  $P\to\infty$. We have
\begin{equation}
\tilde D_0[\tilde\varphi^P](\theta)=-\sum\limits_{p=0}^P 2p \tilde \varphi_{2p}
\frac{\sin(2p\theta)}{\sin \theta}
\end{equation}
In view of the identity
\begin{equation}\label{Un}
\frac{\sin(n+1)\theta}{\sin \theta}=\left\{\begin{array}{ll} \sum\limits_{k=0}^p (2-\delta_{0k})\cos2k\theta, & n=2p \\ 2\sum\limits_{k=0}^p \cos(2k+1)\theta, & n=2p+1, \end{array}\right. 
\end{equation}
(which expressed in terms of Chebyshev polynomials of the first and
second kind is given e.g. in equation (40)~\cite[p. 187]{Bateman} and
problem~3 in~\cite[p. 36]{MasonHandscomb}), we obtain 
\begin{equation}\label{D0dec}
\tilde D_0[\tilde\varphi^P]=-2\sum\limits_{k=1}^{P}
\left(\sum\limits_{p = k}^\infty 2p\tilde\varphi_{2p}^P\right)e_{2k-1}
\end{equation}
where
\[
\tilde\varphi_{2p}^P = \left\{\begin{array}{cc} \tilde\varphi_{2p}^P, &
p\leq P \\ 0, & p> P\end{array}\right. .
\]

The quantity in parenthesis on the right-hand-side of
equation~\eqref{D0dec} can be expressed in terms of the adjoint $C^*$
of the discrete Ces\`aro operator $C$, where $C$ and $C^*$ are given
by
\begin{equation}
C[g](n)=\frac{1}{n+1}\sum\limits_{k=0}^n g_k, \quad  C^*[g](k)=\sum\limits_{p=k}^\infty \frac{g_p}{p+1};
\end{equation}
as there follows from~\cite{BrownHalmosShields}, $C$ and $C^*$ define
bounded operators from $\ell^2$ into $\ell^2$.  We thus re-express
equation~\eqref{D0dec} as
\begin{equation}\label{D0dec2}
\tilde{D}_0[\tilde\varphi^P]=-2\sum\limits_{k=1}^{P}
C^*[g^P](k)e_{2k-1} ,
\end{equation}
where the sequence $g^P$ is given by
$g^P_p=2p(p+1)\tilde{\varphi}^P_{2p}$.  Clearly $g^P$ is an element of
$\ell^2$ and we have
 \begin{equation}\label{gtophi}
   ||g^P||_{\ell^2} =\left(\sum\limits_{p=1}^P |g_p^P|^2\right)^{\frac 12} \leq \left(\sum\limits_{p=1}^P (2p)^4 |\tilde\varphi_{2p}^P|^2\right)^{\frac 12}\leq 2^{1/2} ||\tilde\varphi^P||_{H^2_e(2\pi)}.
 \end{equation}
 In view of the boundedness of $C^*$ as an operator from $\ell^2$
 into $\ell^2$ we obtain
\begin{equation}
||\tilde{D}_0[\tilde\varphi^P]||_{H^0_e(2\pi)}\leq 2 ||C^*||_{\ell^2} ||g^P||_{\ell^2},
\end{equation}
and thus, in view of~\eqref{gtophi}, 
\begin{equation}
||\tilde{D}_0[\tilde\varphi^P]||_{H^0_e(2\pi)}\leq 2^{3/2}||C^*||_{\ell^2} ||\tilde \varphi^P||_{H^2_e(2\pi)}.
\end{equation} 
A similar manipulation on odd-termed sequences can be performed and,
in all, it follows that for any $P>0$ and any element
$\tilde\varphi=\sum_{n=0}^\infty \tilde\varphi_n e_n\in H^2_e(2\pi)$
the corresponding finite-term truncation $\tilde
\varphi^P=\sum_{n=0}^P \tilde \varphi_n e_n$ satisfies
\begin{equation}\label{D0bound}
||\tilde{D}_0[\tilde\varphi^P]||_{H^0_e(2\pi)}\leq K ||\tilde \varphi^P||_{H^2_e(2\pi)}
\end{equation}
for some constant $K$ which does not depend on $P$.

Since $\tilde \varphi^P$ converges in $H^2_e(2\pi)$ to $\tilde
\varphi$ as $P\to\infty$, it follows that $\tilde \varphi^P$ is a
Cauchy sequence in $H^2_e(2\pi)$:
\begin{equation}\label{cauchy}
||\tilde \varphi^P - \tilde \varphi^Q||_{H^2_e(2\pi)}\to 0 \quad
\mbox{as} \quad P,\, Q\to\infty.
\end{equation}
Now, $(\tilde \varphi^P - \tilde \varphi^Q)$ can be viewed as a
finite-term truncation of an element of $H^2_e(2\pi)$ and, thus, the
estimate~\eqref{D0bound} applies to it: we obtain
\begin{equation}\label{D0bound2}
||\tilde{D}_0[\tilde\varphi^P] - \tilde{D}_0[\tilde\varphi^Q]||_{H^0_e(2\pi)}\leq K  ||\tilde \varphi^P - \tilde \varphi^Q||_{H^2_e(2\pi)}.
\end{equation}
Equations~\eqref{cauchy}-\eqref{D0bound2} show that
$\tilde{D}_0[\tilde\varphi^P]$ is a Cauchy sequence in $H^0_e(2\pi)$.
It follows that the sequence $\tilde{D}_0[\tilde\varphi^P]$ converges
in that space as $P\to\infty$ and, in particular, that
$\tilde{D}_0[\tilde\varphi]$ is an element of $H^0_e(2\pi)$. Taking
limit as $P\to\infty$ in equation~\eqref{D0bound}, finally, yields the
inequality
\[
  ||\tilde{D}_0[\tilde\varphi]||_{H^0_e(2\pi)}\leq K ||\tilde\varphi||_{H^2_e(2\pi)}
\]
which establishes the needed boundedness of the operator
$\tilde{D}_0$. The proof is now complete.
\end{proof}

\begin{corollary}\label{N0cor}
  The operator $\tilde{N}_0$ defines a bounded mapping from
  $H^2_e(2\pi)$ into $H^0_e(2\pi)$.
\end{corollary}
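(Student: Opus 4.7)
My plan is to exploit the factorization $\tilde N_0 = \tilde D_0 \tilde S_0 \tilde T_0$ given in equation~\eqref{N0Def} and to show that each of the three factors is a bounded operator between appropriate spaces of the scale $\{H^s_e(2\pi)\}$, so that the composition maps $H^2_e(2\pi)$ into $H^0_e(2\pi)$. The last factor $\tilde D_0$ is handled directly by Lemma~\ref{D0lemma}, and the middle factor $\tilde S_0$ is handled by Lemma~\ref{lemma1}, so the only remaining task is to analyze the operator $\tilde T_0$ on cosine-Fourier series.

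First, I would verify that $\tilde T_0$ preserves the even, $2\pi$-periodic character required to apply Lemma~\ref{lemma1} afterwards: if $\tilde \varphi$ is even and $2\pi$-periodic, then $\tilde \varphi(\theta)\sin\theta$ is odd and $2\pi$-periodic, hence $\tilde T_0[\tilde\varphi] = \frac{d}{d\theta}(\tilde\varphi\sin\theta)$ is even and $2\pi$-periodic. Next I would establish the bound
\begin{equation}
\tilde T_0 : H^{s+1}_e(2\pi) \longrightarrow H^s_e(2\pi) \quad\text{is bounded for every } s\geq 0.
\end{equation}
This follows from a short calculation using the identity $\cos(n\theta)\sin\theta = \tfrac{1}{2}[\sin((n+1)\theta) - \sin((n-1)\theta)]$, which gives
\begin{equation}
\tilde T_0[e_n] = \tfrac{1}{2}\bigl[(n+1)\,e_{n+1} - (n-1)\,e_{n-1}\bigr],
\end{equation}
so that the Fourier cosine coefficients $b_m$ of $\tilde T_0[\tilde\varphi]$ satisfy $|b_m|\leq C\, m(|a_{m-1}|+|a_{m+1}|)$; summing $m^{2s}|b_m|^2$ yields an estimate by $\|\tilde\varphi\|^2_{H^{s+1}_e(2\pi)}$, up to a constant.

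Taking $s=1$ in the display above gives $\tilde T_0 : H^2_e(2\pi) \to H^1_e(2\pi)$ bounded. Composing with Lemma~\ref{lemma1} (which yields $\tilde S_0 : H^1_e(2\pi) \to H^2_e(2\pi)$ bicontinuously) and then with Lemma~\ref{D0lemma} (which yields $\tilde D_0 : H^2_e(2\pi) \to H^0_e(2\pi)$ boundedly), the factorization~\eqref{N0Def} produces the bounded composition $\tilde N_0 : H^2_e(2\pi) \to H^0_e(2\pi)$, completing the proof.

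No genuine obstacle is expected: the computation for $\tilde T_0$ is routine once the parity is checked, and the two serious technical ingredients, namely the regularizing effect of Symm's operator $\tilde S_0$ and the Ces\`aro-based continuity of $\tilde D_0$, are already available as Lemmas~\ref{lemma1} and~\ref{D0lemma}. The only subtlety worth flagging is the compatibility of the factorization at the endpoints where $\sin\theta$ vanishes; this is absorbed into the bookkeeping of Fourier coefficients used above, and it is precisely this issue that motivated the careful proof of Lemma~\ref{D0lemma} in the first place.
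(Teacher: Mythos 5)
Your proposal is correct and follows essentially the same route as the paper: the factorization $\tilde N_0 = \tilde D_0 \tilde S_0 \tilde T_0$ of equation~\eqref{N0Def}, combined with Lemma~\ref{lemma1} for $\tilde S_0$, Lemma~\ref{D0lemma} for $\tilde D_0$, and the mapping property $\tilde T_0\colon H^{s+1}_e(2\pi)\to H^{s}_e(2\pi)$, which the paper records as ``easily verified'' (equation~\eqref{T0map}) and which you verify explicitly via $\tilde T_0[e_n]=\tfrac12\bigl[(n+1)e_{n+1}-(n-1)e_{n-1}\bigr]$. No changes are needed.
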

\begin{proof}
  The proof follows from Lemma~\ref{D0lemma}, the
  decomposition~\eqref{N0Def}, the continuity of $~\tilde{S}_0$
  established in ~\eqref{S0reg}, and the easily verified observation
  that $\tilde{T}_0$ defines a continuous operator from $H^{s+1}_e$
  and $H^s_e$:
 \begin{equation}\label{T0map}
 \tilde{T}_0:\quad H^{s+1}_e(2\pi) \rightarrow H^s_e(2\pi).
\end{equation}
\end{proof}

A preliminary boundedness result for the composite operator $\tilde
J_0 = \tilde{N}_0\tilde{S}_0$ follows from this Corollary.
 \begin{corollary}\label{J0cor}
   The straight-arc zero-frequency version of the composite operator
   $\tilde N \tilde S$, which is given by
\begin{equation}\label{j0def}
  \tilde J_0 = \tilde N_0\tilde S_0,
\end{equation} 
defines a bounded operator from $H^{1}_e(2\pi)$ into $H^0_e(2\pi)$.
\end{corollary}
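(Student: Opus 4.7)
The plan is to observe that Corollary \ref{J0cor} is a direct consequence of two results already established in this section, namely Lemma \ref{lemma1} and Corollary \ref{N0cor}, combined by simple composition. No new analytical estimates are required; the entire argument amounts to matching the target and domain Sobolev indices.

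First I would invoke Lemma \ref{lemma1} with $s = 1$, which immediately yields that $\tilde S_0$ maps $H^1_e(2\pi)$ continuously into $H^2_e(2\pi)$. Next I would invoke Corollary \ref{N0cor}, which asserts that $\tilde N_0$ maps $H^2_e(2\pi)$ boundedly into $H^0_e(2\pi)$. Composing these two mappings, and using the standard fact that the composition of two bounded linear operators between Banach spaces is bounded (with operator norm bounded by the product of the individual operator norms), one concludes that
\begin{equation*}
\tilde J_0 \;=\; \tilde N_0 \tilde S_0 \;:\; H^1_e(2\pi) \;\longrightarrow\; H^2_e(2\pi) \;\longrightarrow\; H^0_e(2\pi)
\end{equation*}
is a bounded operator, as claimed.

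There is essentially no obstacle here: the delicate work has already been performed in Lemma \ref{D0lemma} (to handle the singular factor $1/\sin\theta$ via the Ces\`aro operator) and in Lemma \ref{lemma1} (to identify $\tilde S_0$ with the diagonal operator $W$). The only minor item worth noting is that the loss of regularity inherent in $\tilde N_0$ (which consumes two derivatives, going from $H^{s+2}$ to $H^s$ in the preliminary version proved so far) is exactly compensated by the gain of one derivative provided by $\tilde S_0$, so that the composite $\tilde J_0$ loses only one derivative at this preliminary stage. Sharper mapping properties of $\tilde J_0$ between $H^s_e(2\pi)$ and $H^s_e(2\pi)$ will have to be recovered later by a finer analysis; the present corollary is a preliminary statement sufficient to launch that analysis.
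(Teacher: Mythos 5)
Your argument is correct and is exactly the paper's (the paper states the corollary as an immediate consequence of Lemma~\ref{lemma1} with $s=1$ and Corollary~\ref{N0cor}, composed in the obvious way). Nothing further is needed.
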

In the following section we show that, as stated in Theorem~\ref{th_1}
for the related operator $\tilde J^\tau_0$
(cf. Section~\ref{J_0_tau}), not only does $\tilde J_0$ define a
continuous operator from $H^{1}_e(2\pi)$ into $H^0_e(2\pi)$
(Corollary~\ref{J0cor}): $\tilde J_0$ can also be viewed as a
continuous operator from $H^s_e(2\pi)$ into $H^s_e(2\pi)$ for all
$s\geq 0$.

\subsection{Boundedness of $\tilde{J}_0$ in $H^s_e$ and link with the
  continuous Ces\`aro operator\label{cos_basis}}
The continuity proof presented in this section is based in part on the
following lemma, whose proof relies on use of a certain operator
$\tilde{C}$ related to the {\em continuous} Ces\`aro operator. (Note
that the constructions in Section~\ref{prelim_prop} invoke properties
of the {\em discrete} Ces\`aro operator, instead).
\begin{lemma}\label{Cprop}
  For all $s\geq 0$ the integral operator
\begin{equation}\label{Cfact}
{\tilde C}[\tilde \varphi](\theta)=
\frac{\theta(\pi-\theta)}{\pi\sin\theta}\left[
  \frac{1}{\theta}\int_0^\theta \tilde \varphi(u) du -
  \frac{1}{\pi-\theta}\int_\theta^\pi \tilde \varphi(y) du \right]
\end{equation}
maps $H^s_e(2\pi)$ continuously to itself. Furthermore, 
\begin{equation}\label{Cdef}
{\tilde C}[e_n] (\theta ) = \left\{\begin{array}{ll} 
 0& \text{for $n=0$}\\
\frac{\sin
n\theta}{n\sin\theta} & \text{for $n>0$}, \end{array}\right.
\end{equation}
for all $n\geq 0$
\end{lemma}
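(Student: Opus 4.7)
The plan is to split the proof into two parts: the diagonal identity on the basis $\{e_n\}$, and the $H^s_e(2\pi)\to H^s_e(2\pi)$ continuity. For the identity \eqref{Cdef}, I would substitute $\tilde\varphi=e_n$ directly into \eqref{Cfact} and use $\int_0^\theta\cos(nu)\,du=\sin(n\theta)/n$. For $n=0$ both averages equal $1$, so the bracket vanishes identically. For $n\geq 1$, using $\sin(n\pi)=0$ one finds that the bracket equals
\begin{equation*}
  \frac{\sin(n\theta)}{n}\left[\frac{1}{\theta}+\frac{1}{\pi-\theta}\right] = \frac{\sin(n\theta)}{n}\cdot\frac{\pi}{\theta(\pi-\theta)},
\end{equation*}
and multiplication by the prefactor $\frac{\theta(\pi-\theta)}{\pi\sin\theta}$ yields $\frac{\sin(n\theta)}{n\sin\theta}$, as claimed.

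For continuity I would treat the cases $s=0$ and $s>0$ separately. For $s=0$ I would work directly with the integral form \eqref{Cfact}, using the fact that $\alpha(\theta)=\frac{\theta(\pi-\theta)}{\pi\sin\theta}$ extends to a smooth and strictly positive function on the closed interval $[0,\pi]$ and hence acts as a bounded multiplier. The two averages $\frac{1}{\theta}\int_0^\theta\tilde\varphi\,du$ and $\frac{1}{\pi-\theta}\int_\theta^\pi\tilde\varphi\,du$ are the continuous Ces\`aro operator and its reflection across $\pi/2$, both bounded on $L^2(0,\pi)$ by Hardy's inequality; the evenness and $2\pi$-periodicity of the output are then obtained by taking the even periodic extension of the $L^2(0,\pi)$ function produced by the right-hand side of \eqref{Cfact}.

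For $s>0$ I would work in Fourier. Writing $\tilde\varphi=\tfrac{a_0}{2}+\sum_{n\geq 1}a_n e_n$ and combining \eqref{Cdef} with the expansion \eqref{Un} of $\sin(n\theta)/\sin\theta$ in the cosine basis, the Fourier coefficients $c_k$ of $\tilde C[\tilde\varphi]$ become explicit: after sorting by parity one obtains $c_{2j}=2\sum_{p\geq j}a_{2p+1}/(2p+1)$ for $j\geq 1$, $c_{2j+1}=\sum_{p\geq j+1}a_{2p}/p$ for $j\geq 0$, and $c_0=\sum_{p\geq 0}a_{2p+1}/(2p+1)$ (the latter controlled by $\|\tilde\varphi\|_{L^2}$ via Cauchy--Schwarz). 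Continuity then reduces to the weighted $\ell^2$ inequality $\sum_k |c_k|^2 k^{2s}\leq C\sum_n|a_n|^2 n^{2s}$, which I would establish by applying Schur's test to the kernel $K(k,n)\sim k^s/n^{s+1}$ supported on $\{n>k\}$: both the row sum $k^s\sum_{n>k}n^{-(s+1)}$ and the column sum $n^{-(s+1)}\sum_{k\leq n}k^s$ are uniformly bounded for any fixed $s>0$.

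The main obstacle I anticipate is precisely the transition at $s=0$: Schur's test just fails at that endpoint since the row sum becomes the divergent harmonic series, which is why the proof must bifurcate into the integral/Hardy argument (for $s=0$) and the Fourier/Schur argument (for $s>0$). A conceptually cleaner unified approach would factor $\tilde C$ as \emph{antidifferentiation modulo a linear correction} (mapping even $H^s$ into odd $H^{s+1}$ functions vanishing at $0$ and $\pi$) followed by \emph{division by $\sin\theta$}, but the division step itself still requires a Schur-type estimate to be made quantitative in the Sobolev scale.
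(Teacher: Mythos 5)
Your proposal is correct, but for the main part --- the $H^s_e(2\pi)\to H^s_e(2\pi)$ continuity with $s>0$ --- it takes a genuinely different route from the paper. The verification of \eqref{Cdef} and the $s=0$ case agree in substance with the paper, which also rests the $L^2$ step on boundedness of the continuous Ces\`aro operator $C[f](x)=\frac{1}{x}\int_0^x f(u)\,du$ (cited from Brown--Halmos--Shields; this is Hardy's inequality, as you note). For $s>0$, however, the paper never examines Fourier coefficients of the output: it writes $C[f](x)=\int_0^1 f(xu)\,du$, derives the pointwise bound $\left|\partial_x^m C[f](x)\right|\le C\left[|f^{(m)}|\right](x)$, integrates to get $H^m_e(2\pi)\to H^m_e(2\pi)$ continuity for every integer $m\ge 0$, extends $\tilde C$ to an operator $\tilde P$ on all of $H^m(2\pi)$ (set equal to zero on odd functions), and invokes Sobolev interpolation to reach every real $s\ge 0$. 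You instead compute the cosine coefficients of $\tilde C[\tilde\varphi]$ explicitly from \eqref{Cdef} and \eqref{Un} (your coefficient formulas check out, up to the harmless factor of $2$ in $c_0$ coming from the $\frac{1}{2}a_0$ convention) and bound the resulting triangular matrix by Schur's test with kernel comparable to $k^s/n^{s+1}$ on $\{n>k\}$. Your route is more self-contained --- no interpolation theorem and no artificial extension to non-even functions are needed, it treats all real $s>0$ at once, it yields an explicit operator norm of order $s^{-1/2}$ as $s\to 0^+$, and it makes transparent exactly why the endpoint $s=0$ must be handled separately (the harmonic-series divergence of the row sums). The paper's route is shorter once the $L^2$ theory of the Ces\`aro operator and interpolation are taken as known, requires no coefficient bookkeeping, and fits the style of the companion Lemma~\ref{D0lemma}, which exploits the discrete Ces\`aro operator in the same spirit. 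The only point you should make explicit when writing this up is that the coefficientwise operator coincides with the integral operator \eqref{Cfact} on all of $H^s_e(2\pi)$ for $s>0$; since $H^s_e(2\pi)\subset H^0_e(2\pi)$, this follows in one line from your $s=0$ bound together with the density of finite cosine sums, so it is a remark rather than a gap.
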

\begin{proof}
  The relation~\eqref{Cdef} results from simple manipulations. The
  integral operator on the right-hand side of equation~\eqref{Cfact}
  in turn, can be expressed in terms of the continuous Ces\`aro
  operator
\begin{equation}\label{expr_ces}
C[f](x)=\frac{1}{x}\int_0^x f(u)du=\int_0^1f(xu)du.
\end{equation}
As is known~\cite{BrownHalmosShields}, $C$ is a bounded operator from
$L^2[0,b]$ into $L^2[0,b]$ (the space of square-integrable functions
over $[0,b]$) for all $b>0$. In view of the relations~\eqref{Cdef} it
follows that the operator $\tilde C$ can be extended in a unique
manner as a bounded operator from $H^0(2\pi)$ to $H^0(2\pi)$. Taking
into account equation~\eqref{expr_ces}, further, for each $f\in
C^\infty_0[0,b]$, $m\in \mathbb{N}$ and $x\in[0,b]$ we obtain
\begin{equation}
\left|\frac{\partial^m {C}[f](x)}{\partial x^m}\right|^2\leq \left(\int_0^1
\left|u^mf^{(m)}(xu)\right|du\right)^2 \leq \left(\int_0^1
\left|f^{(m)}(xu)\right|du\right)^2 = \left(C\left[g\right](x)\right)^2
\end{equation}
where $g=\left|f^{(m)}\right|$. Integrating this inequality with
respect to $x$ and taking into account the boundedness of $C$ as an
operator from $L^2$ to $L^2$ there results
\begin{equation}
\int_0^{2\pi}\left|\frac{\partial^m {C}[f](x)}{\partial x^m}\right|^2 dx\leq
M||f^{(m)}||_{L_2[0,2\pi]}^2
\end{equation}
for some constant $M$. It follows easily from this inequality that
$\tilde{C}$ is a continuous operator from $H^m_e(2\pi)$ into
$H^m_e(2\pi)$ for all non-negative integers $m$.  Letting $H^m(2\pi)$
be the space of $2\pi$ periodic functions whose derivatives of order
$k$ are square integrable in any bounded set of the line for all
integers $k\leq m$ (c.f.~\cite{Kress}) we see that $\tilde{C}$ equals
the restriction to $H^m_e(2\pi)$ of some continuous operator
$\tilde{P}: H^m(2\pi)\to H^m(2\pi)$: we may simply take, for example,
$\tilde{P}$ to equal $\tilde{C}$ on the subspace of even functions and
to equal 0 on the space of odd functions.  In view of the Sobolev
interpolation result (see e.g.~\cite[Theorem 8.13]{Kress}),
$\tilde{P}$ defines a continuous operator from $H^s(2\pi)$ to
$H^s(2\pi)$ for all $s\geq 0$, and thus, by restriction of $\tilde{P}$
to the susbspace of even and period functions we see that $\tilde{C}$
is a continuous operator from $H^s_e(2\pi)$ to $H^s_e(2\pi)$ for all
$s\geq 0$.
\end{proof}\\

Our main result concerning the operator $\tilde{J}_0$ is given in the
following lemma.
\begin{lemma}\label{J0bounded} The composition 
  $\tilde J_0= \tilde N_0\tilde S_0$ defines a bounded operator from
  $H^s_e(2\pi)$ into $H^s_e(2\pi)$ for all $s \geq 0$.
\end{lemma}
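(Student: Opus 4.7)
The plan is to compute the action of $\tilde J_0 = \tilde D_0 \tilde S_0 \tilde T_0 \tilde S_0$ on the basis $\{e_n\}_{n\geq 0}$ of $H^s_e(2\pi)$ in closed form, and to recognize the result as a linear combination of operators already known to be bounded on $H^s_e(2\pi)$ for all $s\geq 0$. In particular, the closed form will involve the operator $\tilde C$ of Lemma~\ref{Cprop}, whose continuous Ces\`aro-type boundedness is the key ingredient. This is a natural companion to Lemma~\ref{D0lemma}, which used the \emph{discrete} Ces\`aro operator to control the single factor $\tilde D_0$; here the \emph{continuous} Ces\`aro operator handles the full composition in a single stroke.

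Concretely, combining the diagonal relation~\eqref{S0Diag} with the tridiagonal action
\begin{equation*}
\tilde T_0[e_n]=\tfrac{1}{2}\bigl[(n+1)e_{n+1}-(n-1)e_{n-1}\bigr]\quad (n\geq 1),\qquad \tilde T_0[e_0]=e_1,
\end{equation*}
which follows from $\cos(n\theta)\sin\theta=\tfrac{1}{2}[\sin((n+1)\theta)-\sin((n-1)\theta)]$, and exploiting the fortuitous cancellation $(n+1)\lambda_{n+1}=(n-1)\lambda_{n-1}=\tfrac{1}{2}$ valid for $n\geq 2$, a short calculation yields $\tilde S_0\tilde T_0\tilde S_0[e_n]=-\sin(n\theta)\sin\theta/(4n)$ for $n\geq 2$. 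Applying $\tilde D_0$ via the product rule and noting that for $n=1$ the same end result is reached (an extra constant produced by the anomalous $\lambda_0$ is annihilated by $\tilde D_0$), I expect to obtain
\begin{equation*}
\tilde J_0[e_n]=-\tfrac{1}{4}e_n-\tfrac{\cos\theta}{4}\cdot\tfrac{\sin(n\theta)}{n\sin\theta}\quad (n\geq 1),\qquad \tilde J_0[e_0]=-\tfrac{\ln 2}{4}e_0.
\end{equation*}

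In view of~\eqref{Cdef}, the formula for $n\geq 1$ is precisely $\tilde J_0[e_n]=-\tfrac{1}{4}e_n-\tfrac{1}{4}M_{\cos}\tilde C[e_n]$, where $M_{\cos}$ denotes multiplication by the smooth even function $\cos\theta$; the $n=0$ case supplies only a rank-one correction. Defining
\begin{equation*}
J := -\tfrac{1}{4}I-\tfrac{1}{4}M_{\cos}\tilde C + R,
\end{equation*}
with $R$ the rank-one operator sending $e_0$ to $\tfrac{1-\ln 2}{4}e_0$ and vanishing on the $e_n$, $n\geq 1$, one has $J[e_n]=\tilde J_0[e_n]$ for every $n$. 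Boundedness of $J$ from $H^s_e(2\pi)$ to itself is immediate: the identity and $R$ are trivial, $\tilde C$ is bounded on $H^s_e(2\pi)$ by Lemma~\ref{Cprop}, and $M_{\cos}$ is bounded on $H^s_e(2\pi)$ because multiplication by a smooth even $2\pi$-periodic function preserves each Sobolev space of even functions. By density of $\mathrm{span}\{e_n\}$ in $H^s_e(2\pi)$, on which $\tilde J_0$ is unambiguously defined through the chain of compositions, the equality $\tilde J_0=J$ extends to all of $H^s_e(2\pi)$, proving the lemma.

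The main obstacle is the algebraic step in the second paragraph: the precise cancellation yielding the clean closed form relies on the specific values of $\lambda_n$, and the anomalous cases $n=0,1$ (where $\lambda_0$ is not of the form $1/(2n)$ and where $\tilde C[e_0]=0$) must be tracked separately to confirm that they contribute only a finite-rank discrepancy. Once the closed form is in hand, the proof reduces to an application of the already-established continuity of $\tilde C$.
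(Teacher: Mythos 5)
Your computation reproduces, essentially, the paper's own proof: the closed form you derive for $\tilde J_0[e_n]$ is equation~\eqref{N0S0cos}, your operator $J=-\tfrac14 I-\tfrac14 M_{\cos}\tilde C+R$ is exactly the operator $\tilde W_0$ of equation~\eqref{Jfact} (the paper writes your rank-one term $R$ as $\frac{1-\ln 2}{4\pi}\int_0^\pi\tilde\varphi(\theta)\,d\theta$), and the boundedness of $J$ rests, as in the paper, on Lemma~\ref{Cprop}. The algebra checks out, including the anomalous cases $n=0,1$.

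The one loose point is your final sentence. Knowing that the bounded operator $J$ agrees with $\tilde J_0$ on $\mathrm{span}\{e_n\}$ shows only that the restriction of $\tilde J_0$ to that span admits a bounded extension; to conclude that this extension coincides with the actual composition $\tilde N_0\tilde S_0$ on all of $H^s_e(2\pi)$ (say for $s\geq 1$, which is how $\tilde J_0$ is subsequently used, e.g.\ in Corollary~\ref{coro-3} and Lemma~\ref{J0Inverse}), you need some a priori continuity of $\tilde J_0$ itself: agreement on a dense subspace propagates to the whole space only between two \emph{continuous} maps. The paper supplies precisely this via Corollary~\ref{J0cor} (continuity of $\tilde J_0$ from $H^1_e(2\pi)$ into $H^0_e(2\pi)$), which in turn rests on the discrete-Ces\`aro Lemma~\ref{D0lemma}. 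So your framing that the continuous Ces\`aro operator ``handles the full composition in a single stroke,'' rendering the discrete-Ces\`aro control of $\tilde D_0$ dispensable, is not quite accurate: that earlier lemma is what legitimizes the density step. Adding a citation of Corollary~\ref{J0cor} at the end makes your argument complete---and identical to the paper's.
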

\begin{proof}
We first evaluate the action of $\tilde J_0$ on the basis $\{e_n:n\geq 0\}$.
The case $n=0$ is straightforward: in view of~\eqref{S0Diag}
and~\eqref{N0Def} we have
\begin{equation}\label{0case}
\tilde J_0[e_0](\theta) =-\frac{\ln2}{4}.
\end{equation}
For $n\geq 0$, in turn, expanding~(\ref{T0Def}) we obtain
\begin{equation}\label{T0cos}
\begin{split}
\tilde T_0[e_n](\theta) = \cos\theta \cos n\theta-n\sin n\theta \sin \theta
\\ =\frac{\cos(n+1)\theta+\cos(n-1)\theta}{2}+n\frac{\cos(n+1)\theta
-\cos(n-1)\theta}{2}
\end{split}
\end{equation}
which, for $n\geq 2$, in view of~\eqref{S0Diag} yields, upon
application of $\tilde{S}_0$,
\begin{equation}\label{S0T0}
\tilde{S}_0 \tilde T_0[e_n](\theta) =
\frac{\cos(n+1)\theta}{4(n+1)}+\frac{\cos(n-1)\theta}{4(n-1)}+n\left(\frac{\cos(n+1)\theta}{4(n+1)}
-\frac{\cos(n-1)\theta}{4(n-1)}\right)\quad,\quad (n\geq 2).
\end{equation}
In view of~\eqref{N0Def}-\eqref{D0Def}, for $n \geq 2$ we thus obtain
the relation
 \begin{equation}\label{N0en}
   \tilde{N}_0[e_n](\theta)=-\cos\theta\frac{\sin
     n\theta}{2\sin\theta}-\frac{n}{2}\cos n\theta, 
 \end{equation}
 which, as it is easily verified, also holds for $n=1$.  Using this
 relation in conjunction with~\eqref{S0Diag} and~\eqref{0case} it
 follows that
 \begin{equation}\label{N0S0cos}
   \tilde J_0[e_n](\theta)=\left\{\begin{array}{ll}-\frac{\ln 2}{4}, & n =0 \\
       -\cos\theta\frac{\sin n \theta}{4n\sin \theta} -\frac{\cos n\theta}{4},& n>0\end{array}\right.
 \end{equation}

It can be easily verified that the operator $\tilde W_0$  defined by 
\begin{equation}\label{Jfact}
\tilde W_0[\varphi](\theta)=-\frac{\tilde \varphi(\theta)}{4} -
\frac{\cos\theta}{4}{\tilde C[\tilde \varphi](\theta)}+\frac{1-\ln 2 }{4\pi}\int_0^\pi\tilde\varphi(\theta)d\theta,
\end{equation}
reduces to the right-hand side of~\eqref{N0S0cos} when evaluated on
the basis functions: $\tilde{J}_0[e_n]=\tilde W_0[e_n],$ for all
$n\geq 0$ (the last term in equation~\eqref{Jfact} is obtained by
collecting the zero-th order terms, and explicitly expressing the
zero-th order coefficient of $\tilde{\varphi}$ as an integral). In
view of Lemma~\ref{Cprop} we see that the operator $\tilde W_0$
defines a bounded mapping from $H^s_e(2\pi)$ into $H^s_e(2\pi)$ for
all $s>0$. We conclude the equality of $\tilde W_0$ and $\tilde{J}_0$
from the continuity of $\tilde{J}_0$ established in
Corollary~\ref{J0cor}, and the lemma follows.
\end{proof}

From the relationship $\tilde{N}_0 = \tilde{J}_0 \tilde{S}_0^{-1}$ we
immediately obtain the following corollary.
\begin{corollary} \label{coro-3}
  The operator $\tilde{N}_0:\quad H^{s+1}_e(2\pi) \rightarrow
  H^s_e(2\pi)$ is continuous.
\end{corollary}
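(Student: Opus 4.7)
The plan is to chain together two results established earlier in the paper: the bicontinuity of $\tilde{S}_0: H^s_e(2\pi) \to H^{s+1}_e(2\pi)$ from Lemma~\ref{lemma1}, and the boundedness of the composition $\tilde{J}_0 = \tilde{N}_0 \tilde{S}_0 : H^s_e(2\pi) \to H^s_e(2\pi)$ from Lemma~\ref{J0bounded}. Specifically, Lemma~\ref{lemma1} guarantees that $\tilde{S}_0$ is invertible with continuous inverse $\tilde{S}_0^{-1}: H^{s+1}_e(2\pi) \to H^s_e(2\pi)$ (see also Remark~\ref{inv_is_cont}), so the composition $\tilde{J}_0 \tilde{S}_0^{-1}$ is a well-defined bounded operator from $H^{s+1}_e(2\pi)$ to $H^s_e(2\pi)$.

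It then remains only to identify this composition with $\tilde{N}_0$. Given any $\varphi \in H^{s+1}_e(2\pi)$, write $\varphi = \tilde{S}_0[\psi]$ with $\psi = \tilde{S}_0^{-1}[\varphi] \in H^s_e(2\pi)$. Applying the definition~\eqref{j0def} of $\tilde{J}_0$ yields $\tilde{N}_0[\varphi] = \tilde{N}_0 \tilde{S}_0[\psi] = \tilde{J}_0[\psi] = \tilde{J}_0 \tilde{S}_0^{-1}[\varphi]$, so $\tilde{N}_0 = \tilde{J}_0 \tilde{S}_0^{-1}$ as operators on $H^{s+1}_e(2\pi)$, and the continuity claim follows immediately. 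There is no real obstacle here: the corollary is essentially an algebraic rearrangement, and all the analytic content has already been absorbed into Lemmas~\ref{lemma1} and~\ref{J0bounded}.
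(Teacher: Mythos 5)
Your argument is exactly the paper's: the text deduces Corollary~\ref{coro-3} ``from the relationship $\tilde{N}_0 = \tilde{J}_0 \tilde{S}_0^{-1}$,'' combining the boundedness of $\tilde{J}_0$ (Lemma~\ref{J0bounded}) with the bicontinuity of $\tilde{S}_0$ (Lemma~\ref{lemma1}), which is precisely your composition $\tilde{J}_0\tilde{S}_0^{-1}$. So the proposal is correct and takes essentially the same route as the paper.
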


\begin{remark}\label{rem_2}
  The decomposition~\eqref{Jfact} superficially resembles the
  classical closed-surface Calder\'on formula~\eqref{Calderon}, as it
  expresses the operator $\tilde W_0 = \tilde J_0 = \tilde N_0\tilde
  S_0$ as the sum of $-I/4$ and an additional operator. As shown in
  Section~\ref{point_sp} below, however, the operator ${\tilde C}:
  H^s_e(2\pi) \to H^s_e(2\pi)$ which appears in~\eqref{Jfact} is not
  compact---and thus the Fredholm theory cannot be applied to
  establish the continuous invertibility of $\tilde J_0 = \tilde W_0$
  merely on the basis of the decomposition~\eqref{Jfact}.  The results
  of Section~\ref{inv_j0} nevertheless do establish that the operator
  $\tilde J_0 = \tilde N_0\tilde S_0$ is
  bicontinuous. Section~\eqref{point_sp} then provides a description
  of the spectrum of $\tilde J_0$, and, in preparation for the proof
  of Theorem~\ref{th_1}, Section~\ref{J_0_tau} extends all of these
  results to the operator $\tilde{J}_0^\tau$.
  
\end{remark}

\subsection{Invertibility of $\tilde J_0$ \label{inv_j0}}
We now proceed to show that the continuous operator $\tilde J_0 :
H^s_e(2\pi)\to H^s_e(2\pi)$ admits a (bounded) inverse. Noting that
the decomposition~\eqref{N0Def} is not directly invertible on a term
by term basis ($\tilde T_0$ and $\tilde D_0$ are not invertible) we
first state and proof two lemmas concerning the mapping properties of
the operators $\tilde T_0$ and $\tilde D_0$.
\begin{lemma} The operators $\tilde{C}$ and $\tilde{T}_0$ satisfy
\begin{equation}\label{T0C}
  \tilde{T}_0 \tilde C[ e_n ]= \left\{\begin{array}{ll} 0,& n=0 \\
e_n,& n> 0. \end{array}\right.
\end{equation}
and 
\begin{equation}\label{CT0}
  \tilde{C} \tilde T_0[\tilde \varphi]= \tilde \varphi\quad \mbox{for all}\quad \tilde \varphi\in H^s_e(2\pi).
\end{equation}
\end{lemma}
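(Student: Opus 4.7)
The plan is to verify both identities on the basis $\{e_n\}_{n\geq 0}$ of $H^s_e(2\pi)$ by direct computation, and then extend the second identity by density and continuity.

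For the first identity $\tilde T_0 \tilde C[e_n]$, I would apply the definition~\eqref{T0Def} of $\tilde T_0$ to the explicit formula for $\tilde C[e_n]$ given by equation~\eqref{Cdef}. When $n=0$, one has $\tilde C[e_0]=0$ and so both sides vanish. When $n>0$, the $\sin\theta$ factor multiplying $\tilde C[e_n](\theta)=\sin(n\theta)/(n\sin\theta)$ cancels the denominator, leaving $\tilde T_0 \tilde C[e_n](\theta) = \tfrac{d}{d\theta}\bigl(\sin(n\theta)/n\bigr) = \cos(n\theta) = e_n$.

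For the second identity, I would first expand $\tilde T_0[e_n]$ using equation~\eqref{T0cos}, which after grouping terms gives $\tilde T_0[e_n] = \tfrac{n+1}{2}\, e_{n+1} + \tfrac{1-n}{2}\, e_{|n-1|}$ (with the convention $e_{-1}=e_1$ forced by the evenness of cosine). Applying $\tilde C$ term by term via equation~\eqref{Cdef} and using the classical identity $\sin((n+1)\theta) - \sin((n-1)\theta) = 2\cos(n\theta)\sin\theta$, the algebra collapses to $\tilde C\tilde T_0[e_n]=\cos(n\theta)=e_n$ for every $n\geq 2$. The cases $n=0$ and $n=1$ must be verified separately, since the exceptional value $\tilde C[e_0]=0$ from Lemma~\ref{Cprop} breaks the general pattern; each reduces, however, to a one-line computation ($\tilde T_0[e_0]=e_1$ and $\tilde C[e_1]=1=e_0$; $\tilde T_0[e_1]=e_2$ and $\tilde C[e_2]=\cos\theta=e_1$).

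To extend the basis-wise identity to all of $H^s_e(2\pi)$, I would invoke continuity: for $s \geq 0$, $\tilde T_0: H^{s+1}_e(2\pi) \to H^s_e(2\pi)$ is continuous by~\eqref{T0map}, and $\tilde C: H^s_e(2\pi)\to H^s_e(2\pi)$ is continuous by Lemma~\ref{Cprop}, so the composition $\tilde C\tilde T_0: H^{s+1}_e(2\pi)\to H^s_e(2\pi)$ is continuous. Since $\tilde C\tilde T_0$ and the natural inclusion $H^{s+1}_e(2\pi)\hookrightarrow H^s_e(2\pi)$ agree on the dense set of finite cosine sums, they coincide on the whole of $H^{s+1}_e(2\pi)$. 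The only real obstacle in the argument is the bookkeeping for the low-index cases $n=0,1$; the rest is routine trigonometric manipulation together with a standard density argument.
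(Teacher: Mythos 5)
Your computations are correct, and the first identity~\eqref{T0C} is obtained exactly as in the paper: apply the definition~\eqref{T0Def} to the explicit images~\eqref{Cdef}, with the $\sin\theta$ factor cancelling the denominator. For the second identity~\eqref{CT0}, however, you take a genuinely different (and somewhat longer) route than the paper. The paper reads~\eqref{CT0} off directly from the integral representation~\eqref{Cfact}: since $\tilde T_0[\tilde\varphi](\theta)=\frac{d}{d\theta}\bigl(\tilde\varphi(\theta)\sin\theta\bigr)$ and $\tilde\varphi(\theta)\sin\theta$ vanishes at $\theta=0$ and $\theta=\pi$, the two integrals in~\eqref{Cfact} evaluate by the fundamental theorem of calculus to $\tilde\varphi(\theta)\sin\theta$ and $-\tilde\varphi(\theta)\sin\theta$ respectively, and the prefactor $\frac{\theta(\pi-\theta)}{\pi\sin\theta}$ cancels everything, giving $\tilde C\tilde T_0[\tilde\varphi]=\tilde\varphi$ pointwise with no basis expansion at all. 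Your alternative—computing $\tilde T_0[e_n]=\frac{n+1}{2}e_{n+1}+\frac{1-n}{2}e_{n-1}$, applying~\eqref{Cdef}, collapsing via $\sin(n+1)\theta-\sin(n-1)\theta=2\cos(n\theta)\sin\theta$, checking $n=0,1$ separately (note that for $n=1$ the coefficient $\frac{1-n}{2}$ vanishes, so the exceptional value $\tilde C[e_0]=0$ never actually enters), and then extending by density using~\eqref{T0map} and Lemma~\ref{Cprop}—is valid and entirely in the spirit of the surrounding lemmas, which argue on the cosine basis. What the paper's argument buys is brevity and a pointwise identity independent of any continuity or density considerations; what yours buys is that it stays within the purely algebraic, basis-wise bookkeeping already used for $\tilde J_0$, at the cost of the extra density-plus-continuity step (which you do carry out correctly, with the natural domain $H^{s+1}_e(2\pi)$ making the composition well defined).
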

\begin{proof}
In view of~\eqref{T0Def} and~\eqref{Cdef} we
clearly obtain equation~\eqref{T0C}, while equation~\eqref{CT0} follows
immediately from~\eqref{Cfact}. 
\end{proof}\\

\begin{lemma}
  For all $s\geq 2$ the operator $\tilde{D}_0$ can be expressed in the
  form
\begin{equation}\label{DDinv}
  \tilde{D}_0 = -\frac{1}{4}\tilde{C}\left( \tilde{S}_0^{-1}\right)^2.
\end{equation}
\end{lemma}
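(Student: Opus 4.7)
The plan is to verify the identity on the basis $\{e_n\}_{n\geq 0}$ and then extend to all of $H^s_e(2\pi)$ using the continuity of both sides and the density of the basis. Since the spaces $H^s_e(2\pi)$ are nested downward in $s$, it suffices to establish the equality for $s=2$; the statement for $s>2$ then follows by restriction (with the right-hand side automatically providing the stronger $H^{s-2}_e(2\pi)$ image via the mapping properties of $\tilde{S}_0^{-1}$ and $\tilde{C}$).

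First I will evaluate both operators on $e_n$. On the left, a direct differentiation gives $\tilde{D}_0[e_n](\theta)=-n\sin(n\theta)/\sin\theta$, which vanishes for $n=0$. On the right, using the diagonalization~\eqref{S0Diag} we have $\tilde{S}_0^{-1}[e_n]=\lambda_n^{-1}e_n$, so
\[
(\tilde{S}_0^{-1})^2[e_0]=\tfrac{4}{(\ln 2)^2}\,e_0,\qquad (\tilde{S}_0^{-1})^2[e_n]=4n^2\,e_n\quad (n\geq 1).
\]
Applying $\tilde{C}$ and using~\eqref{Cdef} kills the $n=0$ term and yields, for $n\geq 1$, $\tilde{C}(\tilde{S}_0^{-1})^2[e_n]=4n\sin(n\theta)/\sin\theta$. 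Multiplying by $-\tfrac14$ gives exactly $\tilde{D}_0[e_n]$, so the identity holds pointwise on the basis.

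Next I will argue that both sides define bounded linear operators from $H^2_e(2\pi)$ into $H^0_e(2\pi)$. For the left-hand side this is precisely Lemma~\ref{D0lemma}. For the right-hand side, Lemma~\ref{lemma1} gives the bicontinuity $\tilde{S}_0^{-1}:H^{s+1}_e(2\pi)\to H^{s}_e(2\pi)$ for all $s\geq 0$, so $(\tilde{S}_0^{-1})^2:H^2_e(2\pi)\to H^0_e(2\pi)$ is bounded, and Lemma~\ref{Cprop} then provides the boundedness of $\tilde{C}$ on $H^0_e(2\pi)$.

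Finally, since the linear span of $\{e_n\}_{n\geq 0}$ is dense in $H^2_e(2\pi)$, and the two continuous operators $\tilde{D}_0$ and $-\tfrac14\tilde{C}(\tilde{S}_0^{-1})^2$ from $H^2_e(2\pi)$ into $H^0_e(2\pi)$ agree on this dense set, they coincide throughout $H^2_e(2\pi)$, and in particular on every $H^s_e(2\pi)$ with $s\geq 2$. There is no real obstacle here: the only ingredient that required genuine work, namely the boundedness of $\tilde{D}_0$ on $H^2_e(2\pi)$, was already supplied by Lemma~\ref{D0lemma} through the discrete Ces\`aro operator, and everything else reduces to the diagonal spectral computation above.
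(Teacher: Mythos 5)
Your proposal is correct and follows essentially the same route as the paper: verify the identity on the cosine basis using the diagonalization \eqref{S0Diag} of $\tilde{S}_0$ and the action \eqref{Cdef} of $\tilde{C}$, then conclude by the continuity results of Lemmas~\ref{lemma1}, \ref{D0lemma} and \ref{Cprop} together with the density of $\{e_n\}$. Your write-up simply makes explicit the spectral computation and the dense-subspace argument that the paper states in compressed form.
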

\begin{proof}
In view of~\eqref{D0Def} we have
\begin{equation}
\tilde{D}_0[e_n](\theta)=\left\{  \begin{array}{cc} 0, &n=0\\ -n\frac{\sin n \theta}{\sin\theta}, & n\geq 1. \end{array} \right.
\end{equation} 
In view of the continuity of the various operators involved in
relevant Sobolev spaces (Lemmas~\ref{lemma1},~\ref{D0lemma}
and~\ref{Cprop}) and the density of the basis $\{e_n\}$ in
$H^s_e(2\pi)$, equation~\eqref{DDinv} follows from~\eqref{S0Diag}
and~\eqref{Cdef}.
\end{proof}

\begin{corollary} 
For all $s>0$, the operator $\tilde{D}_0$ defines a bounded mapping from
$H^{s+2}_e(2\pi)$ into $H^s_e(2\pi)$.
\end{corollary}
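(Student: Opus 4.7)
The plan is to read the corollary off directly from the factorization $\tilde{D}_0 = -\frac{1}{4}\tilde{C}(\tilde{S}_0^{-1})^2$ established in the preceding lemma, by chaining the continuity statements already proved. First, Lemma~\ref{lemma1} gives that $\tilde{S}_0: H^s_e(2\pi) \to H^{s+1}_e(2\pi)$ is bicontinuous for every $s\geq 0$, so in particular $\tilde{S}_0^{-1}: H^{s+1}_e(2\pi) \to H^{s}_e(2\pi)$ is bounded; applying this twice, $(\tilde{S}_0^{-1})^2: H^{s+2}_e(2\pi)\to H^{s}_e(2\pi)$ is bounded for all $s\geq 0$. Combining with Lemma~\ref{Cprop}, which shows $\tilde{C}: H^s_e(2\pi)\to H^s_e(2\pi)$ is bounded for all $s\geq 0$, the composition $-\frac{1}{4}\tilde{C}(\tilde{S}_0^{-1})^2: H^{s+2}_e(2\pi)\to H^{s}_e(2\pi)$ is bounded for all $s\geq 0$ (and in particular for $s>0$).

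The only point requiring a brief justification is the identification of this composition with $\tilde{D}_0$ on $H^{s+2}_e(2\pi)$ for the full range $s>0$, since the factorization in the previous lemma is stated for $s\geq 2$. I would handle this by density: the identity $\tilde{D}_0[e_n] = -\frac{1}{4}\tilde{C}(\tilde{S}_0^{-1})^2[e_n]$ was verified on each basis element $e_n$ via the explicit diagonal formulas~\eqref{S0Diag} and~\eqref{Cdef}, hence it holds on the dense subspace of finite trigonometric cosine polynomials inside $H^{s+2}_e(2\pi)$. Since the right-hand side extends to a continuous operator $H^{s+2}_e(2\pi)\to H^{s}_e(2\pi)$, this continuous extension coincides with $\tilde{D}_0$ wherever both are defined and, by uniqueness of continuous extension, provides the needed bounded realization of $\tilde{D}_0$ on $H^{s+2}_e(2\pi)$.

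There is essentially no main obstacle here: all the analytical work has been done in Lemmas~\ref{lemma1},~\ref{D0lemma},~\ref{Cprop}, and in the factorization lemma immediately preceding the corollary. The corollary is a pure book-keeping consequence, whose only subtle ingredient is the density argument needed to extend the factorization from $s\geq 2$ down to arbitrary $s>0$.
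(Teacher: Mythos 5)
Your argument is correct and is essentially the paper's (unwritten) proof: the corollary is read off the factorization $\tilde{D}_0=-\frac{1}{4}\tilde{C}\left(\tilde{S}_0^{-1}\right)^2$ of equation~\eqref{DDinv} by chaining Lemma~\ref{lemma1} (applied twice to $\tilde{S}_0^{-1}$) with Lemma~\ref{Cprop}. Your added density step is harmless but unnecessary: the hypothesis $s\geq 2$ in the factorization lemma refers to the index of the domain space $H^s_e(2\pi)$, and in the corollary the domain is $H^{s+2}_e(2\pi)$ with $s+2>2$, so the factorization applies directly (and, in any case, your extension argument is legitimized by the $H^2_e(2\pi)\to H^0_e(2\pi)$ continuity of $\tilde{D}_0$ from Lemma~\ref{D0lemma}, which you cite).
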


\begin{lemma} \label{alt-lemma} 
For each integer $n\geq 2$ we have
\begin{equation}\label{CST}
\tilde{C}\tilde{S}_0\tilde{T}_0[e_n](\theta) = \frac {\cos\theta}{2}
\tilde{C}\left[\frac{n}{1-n^2}\ e_n\right](\theta) -\frac {n\cos n\theta}{2(1-n^2)}.
\end{equation}
\end{lemma}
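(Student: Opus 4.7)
The plan is to evaluate both sides on the basis function $e_n$ by direct, term-by-term computation, exploiting the diagonal/near-diagonal formulas already established for $\tilde{S}_0$, $\tilde{T}_0$, and $\tilde{C}$, and then collapse the resulting trigonometric sums using standard product-to-sum identities.

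First, I would apply $\tilde{S}_0\tilde{T}_0$ to $e_n$ for $n\geq 2$ using the decomposition already computed in equation~\eqref{S0T0}. The hypothesis $n\geq 2$ ensures $n-1\geq 1$, so the diagonal formula $\tilde{S}_0[e_m] = e_m/(2m)$ from~\eqref{S0Diag} applies to both $e_{n+1}$ and $e_{n-1}$ without picking up the anomalous $\frac{\ln 2}{2}$ term from $m=0$; this is the whole reason for restricting to $n\geq 2$.

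Next, I would apply $\tilde{C}$ term by term using~\eqref{Cdef}, which for $m\geq 1$ gives $\tilde{C}[e_m] = \sin(m\theta)/(m\sin\theta)$. This produces an expression of the form
\[
\tilde{C}\tilde{S}_0\tilde{T}_0[e_n](\theta) \;=\; \frac{1}{4\sin\theta}\left[\frac{\sin(n+1)\theta}{n+1} - \frac{\sin(n-1)\theta}{n-1}\right],
\]
after collecting the $(1+n)/(n+1)^2$ and $(1-n)/(n-1)^2$ coefficients that arise from the two occurrences of each $e_{n\pm 1}$ in~\eqref{S0T0}.

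Then I would substitute the identities $\sin(n\pm 1)\theta = \sin n\theta\cos\theta \pm \cos n\theta\sin\theta$, common-denominator the fraction, and observe that the $\sin n\theta$ cross-terms cancel appropriately, leaving
\[
\frac{\cos\theta\,\sin n\theta}{2(1-n^2)\sin\theta} \;-\; \frac{n\cos n\theta}{2(1-n^2)}.
\]
Finally, recognizing the first term as $\frac{\cos\theta}{2}\cdot\frac{n}{1-n^2}\cdot\frac{\sin n\theta}{n\sin\theta} = \frac{\cos\theta}{2}\tilde{C}\bigl[\tfrac{n}{1-n^2}e_n\bigr](\theta)$ by~\eqref{Cdef} yields exactly the claimed right-hand side. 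There is no real obstacle here: the lemma is a purely algebraic consequence of the explicit spectral actions of $\tilde{S}_0$, $\tilde{T}_0$, and $\tilde{C}$ on the cosine basis, with the only mild care required being the arithmetic bookkeeping in combining the coefficients and in confirming that $n\geq 2$ is the correct threshold for avoiding the exceptional $m=0$ eigenvalue of $\tilde{S}_0$.
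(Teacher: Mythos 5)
Your proposal is correct and follows essentially the same route as the paper: compute $\tilde{S}_0\tilde{T}_0[e_n]$ from~\eqref{S0T0} (equivalently $\tfrac14(e_{n+1}-e_{n-1})$ for $n\geq 2$), apply $\tilde{C}$ term by term via~\eqref{Cdef}, expand $\sin(n\pm1)\theta$, and recognize $\frac{\sin n\theta}{n\sin\theta}=\tilde{C}[e_n]$ to obtain~\eqref{CST}. The only quibble is the phrase about the $\sin n\theta$ cross-terms ``cancelling'' (they combine, with $\tfrac{1}{n+1}-\tfrac{1}{n-1}=\tfrac{2}{1-n^2}$, while the $\cos n\theta\sin\theta$ terms are what allow the $\sin\theta$ denominator to clear), but your displayed intermediate and final expressions agree with the paper's equation~\eqref{cancellation} and the claimed identity.
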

\begin{proof}
From the easily established identity
\[
\tilde{S}_0\tilde{T}_0[e_n] = \frac 14 ( e_{n+1} - e_{n-1})\quad (n\geq 1),
\]
using equation~\eqref{Cdef} we obtain the relation
\begin{equation}\label{cancellation}
\tilde{C}\tilde{S}_0\tilde{T}_0[e_n] = \frac 12 \left [ \frac{\sin
    n\theta}{\sin \theta}\frac{\cos \theta}{1-n^2} -\frac {n\cos
    n\theta}{1-n^2}\right]\quad (n\geq 2)
\end{equation}
which, via an additional application of equation~\eqref{Cdef} yields
the desired equation~\eqref{CST}.
\end{proof}
\begin{corollary} \label{alt-cor} The composition
  $\tilde{C}\tilde{S}_0\tilde{T}_0$ which, in view of
  Lemma~\ref{lemma1}, Lemma~\ref{Cprop} and equation~\eqref{T0Def},
  defines a continuous operator from $H^s_e(2\pi)$ to $H^s_e(2\pi)$
  for $s\geq 1$, can in fact be extended in a unique fashion to an
  operator defined on $H^s_e(2\pi)$ for each $s\geq 0$.  For all
  $s\geq 0$, further, these extended maps enjoy additional regularity:
  they can be viewed as continuous operators from $H^s_e(2\pi)$ into
  $H^{s+1}_e(2\pi)$.
\end{corollary}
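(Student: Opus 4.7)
The plan is to convert the explicit formula furnished by Lemma~\ref{alt-lemma} into the \emph{definition} of the extended operator on $H^s_e(2\pi)$, and then read off the additional regularity directly from the decay of the multiplier $n/(1-n^2) = O(1/n)$. Denote $A := \tilde{C}\tilde{S}_0\tilde{T}_0$ on its initial domain. Before proceeding, I would record the two missing low-mode values: a direct calculation using $\tilde{T}_0[e_0] = e_1$, $\tilde{T}_0[e_1] = e_2$, \eqref{S0Diag} and \eqref{Cdef} gives $A[e_0] = \tfrac{1}{2}e_0$ and $A[e_1] = \tfrac{1}{4}e_1$.

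Next, introduce a Fourier-multiplier operator $M$ on $H^s_e(2\pi)$ by
\begin{equation*}
M[e_n] = \mu_n e_n,\qquad \mu_0 = \mu_1 = 0,\qquad \mu_n = \frac{n}{1-n^2}\ \text{for}\ n\geq 2.
\end{equation*}
Since $|\mu_n| \leq 2/n$ for $n\geq 2$, the operator $M$ is bounded from $H^s_e(2\pi)$ into $H^{s+1}_e(2\pi)$ for every $s\geq 0$. Combining Lemma~\ref{alt-lemma} with the two low-mode values just computed, define
\begin{equation*}
\tilde{F}[\tilde\varphi] := \tfrac{\cos\theta}{2}\,\tilde{C}[M\tilde\varphi] \;-\; \tfrac{1}{2}\,M[\tilde\varphi] \;+\; R[\tilde\varphi],
\end{equation*}
where $R$ is the rank-two operator $R[\tilde\varphi] = \tfrac{1}{2}\tilde\varphi_0\, e_0 + \tfrac{1}{4}\tilde\varphi_1\, e_1$ built from the cosine coefficients $\tilde\varphi_n$ of $\tilde\varphi$. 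This $\tilde{F}$ maps $H^s_e(2\pi)$ continuously into $H^{s+1}_e(2\pi)$ for every $s\geq 0$: the multiplier $M$ supplies the gain of one derivative; Lemma~\ref{Cprop} gives continuity of $\tilde{C}$ on $H^{s+1}_e(2\pi)$; multiplication by the smooth function $\cos\theta$ preserves $H^{s+1}_e(2\pi)$ (standard, or immediate at the Fourier level from $2\cos\theta\,e_n = e_{n+1}+e_{n-1}$); and $R$ is finite rank, hence bounded into every $H^t_e(2\pi)$.

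Finally, $\tilde{F}$ and $A$ agree by construction on the finite linear span of $\{e_n\}$, which is dense in $H^s_e(2\pi)$ for every $s\geq 0$. For $s\geq 1$, both operators are continuous from $H^s_e(2\pi)$ to $H^s_e(2\pi)$, so they coincide on all of $H^s_e(2\pi)$, confirming that $\tilde{F}$ really is the operator we began with. For $0\leq s < 1$, the continuity of $\tilde{F}$ just established provides the required extension, and uniqueness of this extension follows from density of the span of $\{e_n\}$ combined with continuity. The main obstacle will be the bookkeeping surrounding the apparent singularity at $n=1$ in $\mu_n$ and the fact that $\tilde{C}[e_0]=0$: these anomalies are isolated cleanly into the finite-rank correction $R$, after which the rest of the argument reduces to a composition-of-bounded-operators verification together with the density argument described above.
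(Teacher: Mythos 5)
Your proposal is correct and follows essentially the same route as the paper: the paper's (very terse) proof also rests on the explicit formula of Lemma~\ref{alt-lemma} --- i.e.\ the cancellation in equation~\eqref{cancellation} --- together with the $O(1/n)$ decay of the resulting multiplier and the continuity of $\tilde{C}$ from Lemma~\ref{Cprop}. You simply carry out in full the bookkeeping the paper leaves implicit (the modes $n=0,1$ via the finite-rank correction, boundedness of multiplication by $\cos\theta$, and the density/uniqueness argument), and those computations (e.g.\ $A[e_0]=\tfrac12 e_0$, $A[e_1]=\tfrac14 e_1$, $|\mu_n|\leq 2/n$) are all accurate.
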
 
\begin{proof}
  The proof follows by consideration of
  equation~\eqref{cancellation}. A cancellation of the form $n\sin
  (n+1)\theta - n\sin (n+1)\theta = 0$, which occurs in the process of
  evaluation of the right hand side of equation~\eqref{cancellation},
  underlies the additional regularity of the mapping
  $\tilde{C}\tilde{S}_0\tilde{T}_0$.
\end{proof}

We can now obtain the inverse of the operator $\tilde{J}_0$.
\begin{lemma}\label{J0Inverse} The continuous operator 
  $\tilde J_0: H^s_e(2\pi) \to H^s_e(2\pi)$ ($s\geq 0$) which,
  according to equations~\eqref{N0Def} and~\eqref{j0def} is given by
\begin{equation}\label{J_exp}
\tilde J_0=\tilde D_0 \tilde{S}_0 \tilde T_0\tilde{S}_0,
\end{equation}
is bijective, with (continuous) inverse $\tilde J_0^{-1}:H^s_e(2\pi)
\to H^s_e(2\pi)$ given by
\begin{equation}\label{I0Def}
\tilde J_0^{-1}=-4\tilde{S}_0^{-1}\tilde{C}\tilde{S}_0\tilde{T}_0
\end{equation}
for $s\geq 2$, and given by the unique continuous extension of the
right hand side of this equation for $2 > s\geq 0$.

\end{lemma}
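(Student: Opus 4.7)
The plan is to verify the proposed two-sided identity $\tilde{J}_0\tilde{J}_0^{-1} = \tilde{J}_0^{-1}\tilde{J}_0 = I$ algebraically on $H^s_e(2\pi)$ for $s \geq 2$, where every factor in~\eqref{I0Def} acts as a standard continuous operator under the mapping properties already established, and then to extend the identity to the range $0 \leq s < 2$ by continuity and density of $H^2_e(2\pi)$ in $H^s_e(2\pi)$. The algebraic engine rests on three ingredients already in place: the factorization $\tilde{D}_0 = -\tfrac{1}{4}\tilde{C}(\tilde{S}_0^{-1})^2$ from~\eqref{DDinv}, which when substituted into~\eqref{J_exp} rewrites $\tilde{J}_0$ in the compact form
\begin{equation*}
\tilde{J}_0 = -\tfrac{1}{4}\,\tilde{C}\,\tilde{S}_0^{-1}\,\tilde{T}_0\,\tilde{S}_0;
\end{equation*}
the identity $\tilde{C}\tilde{T}_0 = I$ from~\eqref{CT0}; and the relation $\tilde{T}_0\tilde{C} = I - P_0$ implied by~\eqref{T0C}, where $P_0$ denotes the orthogonal projector onto $\mathrm{span}\{e_0\}$.

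Computing $\tilde{J}_0^{-1}\tilde{J}_0$ and $\tilde{J}_0\tilde{J}_0^{-1}$ by substituting the displayed form of $\tilde{J}_0$ together with~\eqref{I0Def}, and collapsing the middle $\tilde{T}_0\tilde{C}$ factor via $\tilde{T}_0\tilde{C} = I - P_0$, yields
\begin{equation*}
\tilde{J}_0^{-1}\tilde{J}_0 = \tilde{S}_0^{-1}\tilde{C}\tilde{S}_0(I - P_0)\tilde{S}_0^{-1}\tilde{T}_0\tilde{S}_0, \qquad \tilde{J}_0\tilde{J}_0^{-1} = \tilde{C}\tilde{S}_0^{-1}(I - P_0)\tilde{S}_0\tilde{T}_0.
\end{equation*}
The next step is to show that the $P_0$ correction vanishes in each case. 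In the first expression, $P_0$ commutes past the diagonal operator $\tilde{S}_0^{-1}$, and then $P_0\tilde{T}_0 = 0$, since~\eqref{T0cos} shows that $\tilde{T}_0[e_n]$ has no $e_0$ component for any $n \geq 0$; in the second, $P_0$ again commutes past $\tilde{S}_0^{-1}$ and $\tilde{C}P_0 = 0$ because $\tilde{C}[e_0] = 0$ by~\eqref{Cdef}. What remains in both cases collapses via $\tilde{C}\tilde{T}_0 = I$ together with $\tilde{S}_0\tilde{S}_0^{-1} = \tilde{S}_0^{-1}\tilde{S}_0 = I$ to the identity operator, establishing the Lemma on $H^s_e(2\pi)$ for $s \geq 2$.

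To pass to the full range $s \geq 0$, I would invoke Corollary~\ref{alt-cor}, which furnishes a unique continuous extension of $\tilde{C}\tilde{S}_0\tilde{T}_0$ as an operator $H^s_e(2\pi) \to H^{s+1}_e(2\pi)$ for every $s \geq 0$; composing with the bicontinuous $\tilde{S}_0^{-1}\colon H^{s+1}_e(2\pi) \to H^s_e(2\pi)$ from Lemma~\ref{lemma1} yields a continuous candidate inverse $\tilde{J}_0^{-1}\colon H^s_e(2\pi) \to H^s_e(2\pi)$. Since $\tilde{J}_0$ is also continuous on $H^s_e(2\pi)$ by Lemma~\ref{J0bounded}, and the identities $\tilde{J}_0\tilde{J}_0^{-1} = \tilde{J}_0^{-1}\tilde{J}_0 = I$ have already been proved on the dense subspace $H^2_e(2\pi)$, they propagate by continuity to all of $H^s_e(2\pi)$. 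The main conceptual obstacle---and the reason a naive termwise inversion of the factorization~\eqref{J_exp} fails---is that $\tilde{T}_0$ and $\tilde{C}$ share the constant mode in their kernels and so are not individually invertible; the proposed $\tilde{J}_0^{-1}$ evades this obstruction precisely because the two $P_0$ terms produced by $\tilde{T}_0\tilde{C} = I - P_0$ are absorbed on opposite sides of the compositions, one by $P_0\tilde{T}_0 = 0$ and the other by $\tilde{C}P_0 = 0$.
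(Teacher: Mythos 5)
Your proof is correct and follows essentially the same route as the paper: it rewrites $\tilde J_0$ via the factorization~\eqref{DDinv} as $-\tfrac14\tilde C\tilde S_0^{-1}\tilde T_0\tilde S_0$, uses~\eqref{CT0} and~\eqref{T0C} (your $\tilde T_0\tilde C = I-P_0$ with the constant-mode projector annihilated on either side is just a tidy restatement of the paper's observation that the images of $\tilde T_0$ and $\tilde S_0\tilde T_0$ are orthogonal to $e_0$), and then extends to $0\leq s<2$ by density using Corollary~\ref{alt-cor}, Lemma~\ref{lemma1} and Lemma~\ref{J0bounded}, exactly as in the paper's argument.
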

\begin{proof}\\
  Since the rightmost factor $\tilde{S}_0$ in equation~\eqref{J_exp}
  is a diagonal operator, we consider the next operator from the right
  in this product, namely, $\tilde T_0$, which in view of
  equations~\eqref{T0C} and~\eqref{CT0}, admits $\tilde{C}$ as a
  ``partial'' inverse. Since the next factor $\tilde{S}_0$ from the
  right is, once again, a diagonal operator, we consider next the
  leftmost factor in equation~\eqref{J_exp}: the operator $\tilde
  D_0$, a decomposition of which was provided in
  equation~\eqref{DDinv}.  In sum, to obtain the inverse of $\tilde
  J_0$ we proceed as follows: multiplying $\tilde J_0$ on the right by
  $\tilde{S}_0^{-1}\tilde{C}$ we obtain an operator that maps $e_0$ to
  $0$ and $e_n$ to $ \tilde{D}_0\tilde{S}_0[e_n]$. Thus,
  considering~\eqref{CT0} and~\eqref{DDinv}, we further multiply on
  the right by $-4\tilde{S}_0\tilde T_0$ and we obtain the operator
\begin{equation}\label{inv_left}
-4\tilde J_0\tilde{S}_0^{-1}\tilde{C}\tilde{S}_0\tilde T_0
\end{equation}
which, in view of the fact that the image of $\tilde{S}_0\tilde T_0$
is orthogonal to $e_0$ (as it follows easily from
equations~\eqref{S0Diag} and~\eqref{T0Def}) maps $e_n$ to
$-4\tilde{D}_0\left( \tilde{S}_0\right)^2\tilde T_0[e_n]$ for all
$n\geq 0$. But, in view of~\eqref{DDinv}, this quantity equals
$\tilde{C} \tilde T_0[e_n]$ which, according to~\eqref{CT0}, equals
$e_n$. In other words, the operator~\eqref{inv_left}, which is a
continuous operator from $H^s_e(2\pi)$ to $H^{s-1}_e(2\pi)$ ($s\geq
1$), maps $e_n$ to $e_n$ for $n=0,1,2\dots$---and, thus,
\begin{equation}\label{I0-2}
\tilde{I}_0 = -4\tilde{S}_0^{-1}\tilde{C}\tilde{S}_0\tilde T_0
\end{equation}
is a right inverse of $\tilde J_0$, that is
\begin{equation}\label{rght-inv-sgr1}
  \tilde J_0\tilde{I}_0 = I,
\end{equation} 
at least for $s\geq 1$.

Conversely, since in view of equations~\eqref{J_exp} and~\eqref{DDinv}
$\tilde{J}_0$ can be expressed, for $s\geq 1$, in the form
\begin{equation}\label{J0dense}
\tilde{J}_0=-\frac{1}{4}\tilde{C}\tilde{S}_0^{-1}\tilde{T}_0\tilde{S}_0,
\end{equation}
for $s\geq 2$ we have
\begin{equation}\label{I0J0}
  \tilde I_0 \tilde J_0 = \tilde S_0^{-1} \tilde C \tilde S_0
  \tilde T_0 \tilde C \tilde S_0^{-1} \tilde T_0\tilde{S}_0. 
\end{equation}
Now, as noted above, the image of $\tilde{T}_0$ is orthogonal to
$e_0$, and thus, since $\tilde S_0$ is a diagonal operator, the same
is true of the operator $\tilde S_0^{-1} \tilde T_0 \tilde
S_0$. Equation~\eqref{T0C} can therefore be used directly to obtain
\begin{equation}
\tilde T_0 \tilde C \tilde S_0^{-1} \tilde T_0 \tilde S_0[ e_n ]=  \tilde
S_0^{-1} \tilde T_0 \tilde S_0[ e_n ], \quad  \mbox{for all}\quad n \geq 0.
\end{equation}
Clearly then, equation~\eqref{I0J0} can be reduced to
\begin{equation} 
\tilde{I}_0\tilde{J}_0=\tilde{S}_0^{-1}\tilde C \tilde{T}_0\tilde{S}_0,
\end{equation}
and making use of~\eqref{CT0}, we finally obtain
\begin{equation}\label{lft-inv}
\tilde{I}_0\tilde{J_0}=I, 
\end{equation}
as desired, thus establishing the invertibility of $\tilde{J}_0$ at
least for $s\geq 2$.  The boundedness of $\tilde{I}_0 =
\tilde{J}_0^{-1}$ for $s\geq 2$ follows in view of
Remark~\ref{inv_is_cont} (continuity of inverses of continuous linear
maps) or, otherwise, directly from equation~\eqref{I0Def},
Corollary~\ref{alt-cor} and Lemma~\ref{lemma1}. To treat the case
$2>s\geq 0$, finally, we note that by Corollary~\ref{alt-cor} and
Lemma~\ref{lemma1} $\tilde{I}_0$ can be extended in a unique fashion
as a continuous mapping from $H^s_e(2\pi)$ to $H^s_e(2\pi)$ for all
$s\geq 0$, and that by Lemma~\ref{J0bounded} $\tilde{J_0}$ is
continuous mapping from $H^s_e(2\pi)$ to $H^s_e(2\pi)$ for all $s\geq
0$. The $s\geq 2$ relations~\eqref{rght-inv-sgr1} and~\eqref{lft-inv}
thus extend to all $s\geq 0$ by density of $H^2_e(2\pi)$ in
$H^s_e(2\pi)$ ($2>s\geq 0$), and the proof is thus complete.
\end{proof}

\begin{corollary}\label{N0mapping}
  For all $s\geq 0$, the operator
  $\tilde{N}_0=\tilde{J}_0\tilde{S}_0^{-1}$ defines a bicontinuous
  mapping from $H^{s+1}_e(2\pi)$ to $H^s_e(2\pi)$.
\end{corollary}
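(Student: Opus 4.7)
The plan is to view the statement as an immediate consequence of the factorization $\tilde{N}_0 = \tilde{J}_0 \tilde{S}_0^{-1}$ asserted in the statement itself, which follows formally from the definition $\tilde{J}_0 = \tilde{N}_0 \tilde{S}_0$ in equation~\eqref{j0def}. The whole argument is therefore a routine chaining of mapping properties that have already been established in the preceding lemmas.

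First I would invoke Lemma~\ref{lemma1} to recall that $\tilde{S}_0 : H^s_e(2\pi) \to H^{s+1}_e(2\pi)$ is bicontinuous for every $s \geq 0$; by Remark~\ref{inv_is_cont}, the inverse $\tilde{S}_0^{-1} : H^{s+1}_e(2\pi) \to H^s_e(2\pi)$ is likewise bicontinuous. Next, Lemma~\ref{J0bounded} together with Lemma~\ref{J0Inverse} gives that $\tilde{J}_0 : H^s_e(2\pi) \to H^s_e(2\pi)$ is bicontinuous for every $s \geq 0$. Composing the two maps yields that $\tilde{J}_0 \tilde{S}_0^{-1} : H^{s+1}_e(2\pi) \to H^s_e(2\pi)$ is bicontinuous.

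The only point requiring a line of care is identifying this composition with the operator $\tilde{N}_0$ originally defined by~\eqref{N0def0}, which so far (Corollary~\ref{N0cor}) has only been shown to map $H^2_e(2\pi)$ into $H^0_e(2\pi)$. To confirm the equality $\tilde{N}_0 = \tilde{J}_0 \tilde{S}_0^{-1}$ on the larger domain I would simply verify it on the basis elements $e_n$: the explicit diagonalization~\eqref{S0Diag} of $\tilde{S}_0$ combined with the formula~\eqref{N0S0cos} for $\tilde{J}_0[e_n]$ reproduces precisely the formula~\eqref{N0en} for $\tilde{N}_0[e_n]$, and the $n = 0$ case is immediate. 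Density of finite linear combinations of $\{e_n\}$ in $H^{s+1}_e(2\pi)$ together with continuity of both sides then extends the identity to the whole space. There is no real obstacle here; the corollary is essentially bookkeeping, with the substantive work having been carried out in Lemmas~\ref{J0bounded} and~\ref{J0Inverse}.
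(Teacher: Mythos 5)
Your proposal is correct and follows essentially the same route as the paper, which likewise deduces the corollary directly from equation~\eqref{S0reg}, the definition~\eqref{j0def}, and Lemmas~\ref{J0bounded} and~\ref{J0Inverse}. Your additional verification of the identity $\tilde{N}_0=\tilde{J}_0\tilde{S}_0^{-1}$ on the basis $\{e_n\}$ followed by a density argument merely makes explicit a step the paper leaves implicit (since for $s+1<2$ the operator $\tilde{N}_0$ is understood via continuous extension), and is perfectly fine.
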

\begin{proof}
This follows directly from equation~\eqref{S0reg}, equation~\eqref{j0def}, and
Lemmas~\ref{J0bounded} and ~\ref{J0Inverse}.
\end{proof}

\subsection{Point Spectrum of $\tilde J_0$\label{point_sp}}
Having established boundedness and invertibility, we conclude our
study of the operator $\tilde J_0$ by computing its eigenvalues.
\begin{lemma}\label{Jeigenvalues}
  For any $s>0$, the point spectrum $\sigma_s$ of $\tilde J_0 :
  H^s_e(2\pi) \to H^s_e(2\pi)$ can be expressed as the union
\begin{equation}
\sigma_s = \Lambda_s \cup\Lambda_\infty,
\end{equation}
where $\Lambda_\infty$ is the discrete set 
\begin{equation}\label{lambdaInfinity}
  \Lambda_\infty=\left\{\lambda_n: n=0,1,\dots, \infty\right\},\; \lambda_n=\left\{\begin{array}{ll}-\frac{\ln 2 }{4} , & n = 0 \\
      -\frac{1}{4} - \frac{1}{4n},& n>0 ,
\end{array}
\right.
\end{equation}
and where $\Lambda_s$ is the open bounded set
\begin{equation}\label{lambda_s}
  \Lambda_s=\left\{ \lambda=(\lambda_x+i\lambda_y)\in\mathbb{C}\; : \;4s+2 <
    \frac{-\left(\lambda_x+\frac{1}{4}\right)}{(\lambda_x+\frac{1}{4})^2+\lambda_y^2}\right\}.
\end{equation}
\end{lemma}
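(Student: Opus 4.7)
The plan is to exploit the explicit formula $\tilde J_0 = \tilde W_0$ from Lemma~\ref{J0bounded}, namely
\[
\tilde J_0[\tilde\varphi](\theta) = -\frac{\tilde\varphi(\theta)}{4} - \frac{\cos\theta}{4}\tilde C[\tilde\varphi](\theta) + \frac{1-\ln 2}{4\pi}\int_0^\pi \tilde\varphi(u)\,du,
\]
and look for eigenfunctions of the form $\tilde\varphi(\theta) = (\sin\theta)^\beta\cos\theta$ with $\mathrm{Re}(\beta) > -1$. For such $\tilde\varphi$ one has $\int_0^\theta (\sin u)^\beta\cos u\,du = (\sin\theta)^{\beta+1}/(\beta+1)$, so $\int_0^\pi \tilde\varphi = 0$ and, using the integral formula in~\eqref{Cfact}, $\tilde C[\tilde\varphi](\theta) = (\sin\theta)^\beta/(\beta+1)$. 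Substitution produces the eigenrelation
\[
\tilde J_0[(\sin\theta)^\beta\cos\theta] = -\frac{\beta+2}{4(\beta+1)}(\sin\theta)^\beta\cos\theta,
\]
so $\beta$ corresponds to the eigenvalue $\lambda(\beta) = -\frac{1}{4}-\frac{1}{4(\beta+1)}$; inverting this Moebius map gives $\beta(\lambda) = -1 - 1/(4\mu)$ where $\mu = \lambda + \frac{1}{4}$.

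Next I would determine for which $\lambda$ the function $(\sin\theta)^{\beta(\lambda)}\cos\theta$ actually belongs to $H^s_e(2\pi)$. Since $(\sin\theta)^\beta\cos\theta\sim\theta^\beta$ near $\theta = 0$ and $\sim -(\pi-\theta)^\beta$ near $\theta = \pi$, standard endpoint-regularity criteria yield membership in $H^s_e(2\pi)$ precisely when $\mathrm{Re}(\beta) > s - \frac{1}{2}$ or when $\beta$ is a non-negative integer. The integer case $\beta = n-1$, $n\geq 1$, (with smooth eigenfunctions polynomial in $\sin\theta$ and $\cos\theta$) gives the always-eigenvalues $\lambda_n = -\frac{1}{4}-\frac{1}{4n}$. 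The non-integer case gives the open set of $\lambda$ with $\mathrm{Re}(-1/\mu) > 4s + 2$; the identity $\mathrm{Re}(-1/\mu) = -(\lambda_x + \tfrac14)/((\lambda_x+\tfrac14)^2 + \lambda_y^2)$ identifies this set with $\Lambda_s$ from~\eqref{lambda_s}.

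To recover the remaining eigenvalue $\lambda_0$, observe that $\tilde C[1] = 0$ by~\eqref{Cdef}, whence $\tilde J_0[1] = -\frac{1}{4} + \frac{1-\ln 2}{4} = -\frac{\ln 2}{4}$. To show that no further eigenvalues exist, I would substitute the representation in~\eqref{Cfact} into the eigenvalue equation $\tilde W_0[\tilde\varphi] = \lambda\tilde\varphi$, reducing it to the first-order linear ODE
\[
4\mu\,F'(\theta)\sin\theta + F(\theta)\cos\theta = \frac{M\theta\cos\theta}{\pi} + 4c\sin\theta,
\]
where $F(\theta) = \int_0^\theta \tilde\varphi(u)\,du$, $M = F(\pi)$, and $c = (1-\ln 2)M/(4\pi)$. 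Integration via the integrating factor $(\sin\theta)^{1/(4\mu)}$ produces an explicit one-parameter family of solutions; imposing $F(0) = 0$ and the self-consistency $F(\pi) = M$ then selects exactly the two families already identified: the Ansatz family (when $M = 0$) and $\tilde\varphi \equiv \mathrm{const}$ (when $M \neq 0$, in which case the boundary condition at $\theta = \pi$ forces $4\mu = 1 - \ln 2$, hence $\lambda = \lambda_0$).

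Finally, the geometric description of $\Lambda_s$ is transparent: $\mathrm{Re}(-1/\mu) > 4s+2$ defines in the $\mu$-plane an open disk of radius $1/(8s+4)$ centered at $\mu = -1/(8s+4)$, which translates to an open disk of the same radius centered at $\lambda = -\frac{1}{4} - 1/(8s+4)$ in the $\lambda$-plane. This description yields at once the openness of $\Lambda_s$, the boundedness away from $0$ and $\infty$, the nested-decreasing property as $s$ increases, and the contraction $\bigcap_{s>0}\bar{\Lambda}_s = \{-\frac{1}{4}\}$. The main technical obstacle is the sharp regularity statement $(\sin\theta)^\beta\cos\theta\in H^s_e(2\pi) \iff \mathrm{Re}(\beta) > s - \frac{1}{2}$ for non-integer $\beta$; this requires a careful endpoint analysis (via the asymptotics of cosine coefficients, or a trace/extension argument for even $2\pi$-periodic Sobolev spaces) handled consistently for all $s > 0$.
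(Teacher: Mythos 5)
Your eigenfunction ansatz is correct as far as it goes, and it is a genuinely different route from the paper's (the paper works with cosine coefficients, the upper-triangular representation \eqref{J0exp}, and the explicitly solvable recurrence \eqref{fRec}): using \eqref{Jfact} and \eqref{Cfact} one does get $\tilde W_0[(\sin\theta)^\beta\cos\theta]=-\frac{\beta+2}{4(\beta+1)}(\sin\theta)^\beta\cos\theta$, and the membership condition $\mathrm{Re}\,\beta>s-\frac12$ reproduces exactly the set $\Lambda_s$ of \eqref{lambda_s}. But the argument has two genuine gaps. First, for $\beta=n-1$ odd (i.e.\ $n$ even) the function $(\sin\theta)^{n-1}\cos\theta$ is an \emph{odd} trigonometric polynomial; the element of $H^s_e(2\pi)$ it defines is its even $2\pi$-periodic extension, which behaves like $|\theta|^{\,n-1}$ at the endpoints and hence lies in $H^s_e(2\pi)$ only for $s<n-\frac12$ (for $n=2$ its cosine coefficients are $\frac{4}{\pi(4-m^2)}$, $m$ odd, decaying like $m^{-2}$). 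So your ansatz exhibits no eigenfunction for $\lambda_n$ with $n$ even once $s\geq n-\frac12$, and the inclusion $\Lambda_\infty\subseteq\sigma_s$ is not established; the eigenfunctions that do the job are finite cosine combinations coming from the triangular structure \eqref{J0exp} (e.g.\ $e_2+\frac{1}{3-2\ln 2}\,e_0$ for $\lambda_2=-\frac38$), which your family never produces.

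Second, the completeness step fails as stated. The claimed dichotomy ``$M=0$ gives the ansatz family; $M\neq 0$ forces $4\mu=1-\ln 2$, i.e.\ $\lambda=\lambda_0$'' is contradicted by the same example: $\tilde\varphi=\cos 2\theta+\frac{1}{3-2\ln 2}$ is an eigenfunction with eigenvalue $-\frac38\neq\lambda_0$ and $M=\int_0^\pi\tilde\varphi\neq 0$; indeed $F(\theta)=\frac12\sin 2\theta+\frac{\theta}{3-2\ln 2}$ solves your ODE with $4\mu=-\frac12$, $F(0)=0$ and $F(\pi)=M$. The source of the error is that $\frac{M\theta}{\pi}$ is a particular solution only when $4\mu=1-\ln 2$; for general $\mu$ the inhomogeneous equation must be solved by variation of parameters, and the resulting endpoint analysis at $\theta=0,\pi$ (which is what really decides which $\lambda$ admit eigenfunctions with $M\neq0$, and must recover all of $\Lambda_s\cup\Lambda_\infty$, not just $\lambda_0$) is precisely the substantive work your sketch omits. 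The paper sidesteps both issues at once: subtracting consecutive coefficient equations turns the eigenvalue problem into the two-term recurrence \eqref{fRec}, whose solutions have the computable asymptotics \eqref{qnAsympt}, giving simultaneously the eigenfunctions for $\lambda\in\Lambda_s$, the eigenvalues \eqref{lambdaInfinity} via triangularity, and the absence of any further point spectrum.
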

\begin{proof}
  We start by re-expressing equation~\eqref{N0S0cos} as
\begin{equation}\label{Jc0}
  \tilde J_0[e_n](\theta)=\left\{\begin{array}{ll}-\frac{\ln 2}{4}&n=0\\-\frac{\sin (n+1)\theta}{4n\sin\theta}+\frac{\cos
        n\theta}{4n}-\frac{\cos n \theta}{4},& n>0.
\end{array}\right.
\end{equation}
Then, making use again of~\eqref{Un}
we obtain
\begin{equation}\label{J0exp}
\tilde J_0[ e_n] = \left\{ \begin{array}{ll} \lambda_n e_n -
  \frac{1}{2n}\sum\limits_{k=0}^{p-1}(1-\frac{\delta_{0k}}{2})e_{2k},& n =
  2p,\; p\geq0\\ \lambda_n e_n - \frac{1}{2n}\sum\limits_{k=0}^{p-1}e_{2k+1},&
  n = 2p+1,\; p\geq0\\ \end{array}\right.,
\end{equation}
where the diagonal elements $\lambda_n$ are defined in
equation~\eqref{lambdaInfinity}.  Clearly, $\tilde{J}_0$ takes the
form of an upper-triangular (infinite) matrix whose diagonal terms
$\lambda_n$ define eigenvalues associated with eigenvectors $v_n$,
each one of which can be expressed in terms of a finite linear
combination of the first $n$ basis functions: $v_n=\sum_{k=0}^n c_k^n
e_k$. In particular, for all $n\in \mathbb{N}$, $v_n \in
H^s_e[0,2\pi]$ for all $s > 0$. This shows that the set
$\Lambda_\infty$ of diagonal elements defined in
equation~\eqref{lambdaInfinity} is indeed contained in $\sigma_s$ for
all $s>0$.

As is well known, an upper triangular operator in an
infinite-dimensional space can have eigenvalues beyond those
represented by diagonal elements. As shown in
\cite[Th. 2]{BrownHalmosShields}, for instance, the point spectrum of
the upper-triangular bounded operator
\begin{equation}
C^*[a](n) = \sum\limits_{k=n}^\infty \frac{a_k}{k+1},
\end{equation}
(the adjoint of the discrete Ces\`aro operator $C$) is the open disc
$|\lambda-1|<1$. A similar situation arises
for our operator $\tilde J_0$.

To obtain the full point spectrum of the operator $\tilde J_0$ let
$\lambda \in \mathbb{C}$ and $f=\sum_{k=0}^\infty f_n e_n$ be such
that $\tilde{J}_0[f]=\lambda f$.  It follows from~\eqref{J0exp} that
the coefficients $f_n$ satisfy the relation
\begin{equation}\label{recfn}
  (-\frac{1}{4}-\frac{1}{4n})f_{n} -\frac{1}{2}
  \sum\limits_{k=1}^\infty \frac{f_{n+2k}}{n+2k}=\lambda f_{n}\quad,\quad
  n \geq 1,
\end{equation}
along with 
\begin{equation}
  (-\frac{\ln 2}{4})f_{0} -\frac{1}{4}
  \sum\limits_{k=1}^\infty \frac{f_{2k}}{2k}=\lambda f_{0}\quad,\quad
  n = 0.
\end{equation}
Equation~\eqref{recfn} is equivalent to 
\begin{equation}
\frac{1}{2}\sum\limits_{k=1}^\infty \frac{f_{n+2k}}{n+2k} = f_n(
-\frac{1}{4n}-\frac{1}{4}-\lambda),\quad n\geq 1,
\end{equation}
which, by subtraction, gives
\begin{equation}
\frac{1}{2}\frac{f_{n+2}}{n+2}  = f_n(
-\frac{1}{4n}-\frac{1}{4}-\lambda)-f_{n+2}(-
\frac{1}{4(n+2)}-\frac{1}{4}-\lambda),\quad n \geq 1.
\end{equation}
Therefore, the coefficients of $f$ must satisfy
\begin{equation}\label{fRec}
\left\{\begin{array}{rlc}
f_{n+2}=&f_n\left(\frac{ \frac{z}{2} + \frac{1}{n} }{\frac{z}{2}
  -\frac{1}{(n+2)}  }\right),&\quad n \geq 1\\
\frac{1}{4}\sum\limits_{k=1}^\infty \frac{f_{2k}}{2k} =& f_0( -\frac{\ln
  2}{4}-\lambda),&\quad n=0.
\end{array}\right.
\end{equation}
where, in order to simplify the notations, we write
 \begin{equation}\label{zdef}
z = 8\lambda  +2.
\end{equation}
It is clear from equation~\eqref{fRec} that the zero-th coefficient is
determined by the coefficients of even positive orders,
and that the sequence $f_n$ for $n\geq 1$ is entirely determined by
$f_1$ and $f_2$. 

Clearly, there are no elements of the point spectrum for which $Re(z)
\geq 0 $, since for such values of $z$ the resulting sequence $f_n$ is not
square summable (that is, $\sum |f_n|^2=\infty$). 
Note that the set of vectors $\{v_n\}$ associated
with the discrete eigenvalues $\lambda_n=-\frac{1}{4}-\frac{1}{4n}$,
in turn, are recovered by setting $z=-\frac{2}{n}$.  To determine all
of the elements of the point spectrum with $Re(z)<0$ we study
separately the odd and even terms in the sequence~\eqref{fRec}. We
start with the sequence $q_n=f_{2n}$, which satisfies the recurrence
relationship
 \begin{equation}
q_{n+1}=q_n \left( \frac{ z + \frac{1}{n}}{z -\frac{1}{n+1}}\right),\quad n
\geq 1.
 \end{equation}
Let $z= -x+iy$ with $x > 0$, and assume without loss of generality, that $q_1
= 1 $. Then
\begin{equation}
  q_n = \left(\frac{z-1}{z-\frac{1}{n}}\right)\prod\limits_{k=1}^{n-1}
  \left( \frac{z + \frac{1}{k}}{z -\frac{1}{k}} \right),\quad n\geq 1,
\end{equation}
and it follows that
\begin{equation}\label{logqn}
\begin{split}
\ln|q_n| = \ln\left|\frac{z-1}{z- \frac{1}{n}}\right|+\frac{1}{2}\sum\limits_{k=1}^{n-1} \ln\left( \frac{(x-\frac{1}{k})^2 + y^2}{ (x+\frac{1}{k})^2 + y^2} \right)\\
=  \ln\left|\frac{z-1}{z - \frac{1}{n}}\right|+\frac{1}{2}\sum\limits_{k=1}^{n-1}\ln\left( \frac{1-r(x,y,k))}{1+r(x,y,k)}   \right)
\end{split}
\end{equation}
where
\begin{equation}\label{rxyk}
r(x,y,k)=\frac{2x}{k(x^2+y^2+\frac{1}{k^2})}.
\end{equation}
For large $k$, we have
\begin{equation}
\ln \left( \frac{1-r(x,y,k)}{1+r(x,y,k)}\right)= -\frac{4x}{k(x^2 + y^2) }+O(\frac{1}{k^3}),\; k \rightarrow \infty,
\end{equation}
and thus
\begin{equation}\label{qnAsympt}
   \ln|q_n|= -\frac{2x}{x^2+y^2}\ln n + M + O(\frac{1}{n}),
\end{equation}
where $M$ is a constant.  The absolute value of $q_n$
is thus asymptotically given by
\begin{equation}
|q_n| = O\left( \frac{1}{n^{\frac{2x}{x^2+y^2}}}\right)
\end{equation}
as $n\to\infty$.  It follows that, for any $s>0$, the set of points $(x,y)$
in the half plane such that the sequence $\sum n^{2s} |q_n|^2<\infty$ is
exactly defined by the equation
\begin{equation}\label{convergence_condition}
  2s - \frac{4x}{x^2 + y^2 } < -1.
\end{equation}
The analysis for the odd-term sequence $p_n=f_{2n+1}$ can be carried
out similarly, since
\begin{equation}
  p_{n+1}=p_n\left(\frac{z+\frac{1}{n+\frac{1}{2}}}{z-\frac{1}{n+1+\frac{1}{2}}}\right),\end{equation}
which essentially amounts to replacing $k$ by $k+\frac{1}{2}$ in
equations~\eqref{logqn} and~\eqref{rxyk}. The convergence
condition~\eqref{convergence_condition} thus applies to $p_n$ as well, and 
it follows,  in view of equation~\eqref{zdef}, that the set $\Lambda_s$
defined by~\eqref{lambda_s} contains all the eigenvalues of $\tilde{J}_0$ not contained in
$\Lambda_\infty$. 
\end{proof}
\begin{corollary}
The operator $\tilde{C}\; :H^s_e(2\pi) \to H^s_e(2\pi)$ is not compact.
\end{corollary}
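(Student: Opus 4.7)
The plan is to argue by contradiction using the decomposition of $\tilde{J}_0$ furnished by equation~\eqref{Jfact} together with the characterization of the point spectrum of $\tilde{J}_0$ established in Lemma~\ref{Jeigenvalues}. The key observation is that the open set $\Lambda_s$ provides an uncountable family of eigenvalues for $\tilde{J}_0$, which is incompatible with compactness of any operator that differs from $\tilde{J}_0$ by $-\tfrac{I}{4}$ plus a compact remainder.

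More concretely, suppose for contradiction that $\tilde{C}: H^s_e(2\pi) \to H^s_e(2\pi)$ is compact. Recalling from the proof of Lemma~\ref{J0bounded} (equation~\eqref{Jfact}) that
\begin{equation}
\tilde{J}_0[\tilde\varphi](\theta) = -\frac{\tilde\varphi(\theta)}{4} - \frac{\cos\theta}{4}\tilde{C}[\tilde\varphi](\theta) + \frac{1-\ln 2}{4\pi}\int_0^\pi \tilde\varphi(\theta)\,d\theta,
\end{equation}
I would first note that multiplication by $\cos\theta$ defines a bounded map on $H^s_e(2\pi)$ (since $\cos\theta$ is smooth and even), so the middle term is a composition of a bounded operator with the assumed compact $\tilde{C}$, and is therefore compact. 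The last term is of rank one and hence compact. It follows that the operator $\tilde{J}_0 + \tfrac{I}{4}$ would be compact as an operator from $H^s_e(2\pi)$ to itself.

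The contradiction would then come from the spectral theory of compact operators: the spectrum of a compact operator on a Banach space is at most countable and admits $0$ as its only possible accumulation point. In particular, the set of nonzero eigenvalues of $\tilde{J}_0 + \tfrac{I}{4}$ must be at most countable. However, by Lemma~\ref{Jeigenvalues} the point spectrum of $\tilde{J}_0$ contains the nonempty open subset $\Lambda_s \subseteq \mathbb{C}$ defined in~\eqref{lambda_s}, and inspection of that definition shows that $-\tfrac{1}{4} \notin \Lambda_s$. Hence $\{\lambda + \tfrac{1}{4} : \lambda \in \Lambda_s\}$ is an uncountable set of nonzero eigenvalues of $\tilde{J}_0 + \tfrac{I}{4}$, contradicting the countability of the spectrum of a compact operator.

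There are no real obstacles in this argument: the only point worth verifying carefully is that multiplication by $\cos\theta$ preserves $H^s_e(2\pi)$ (which it does, since the product of two even functions is even, and multiplication by a $C^\infty$ $2\pi$-periodic function is continuous on periodic Sobolev spaces) and that $\Lambda_s$ is genuinely uncountable (which is immediate since it is a nonempty open subset of $\mathbb{C}$, as already used in the statement of Theorem~\ref{th_1}).
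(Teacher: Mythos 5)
Your argument is correct and is essentially the paper's own proof: the paper likewise deduces non-compactness of $\tilde{C}$ from the decomposition~\eqref{Jfact} together with the fact (Lemma~\ref{Jeigenvalues}) that the spectrum of $\tilde{J}_0$ is not discrete, your version merely spelling out the intermediate steps (boundedness of multiplication by $\cos\theta$, compactness of the rank-one term, and the countability of the nonzero spectrum of a compact operator). No gaps; the only point worth checking, that $-\frac{1}{4}\notin\Lambda_s$, is indeed immediate from~\eqref{lambda_s} (or~\eqref{polarLambda}, where $r>0$).
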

\begin{proof}
  This follows from the decomposition~\eqref{Jfact} of $\tilde{J}_0$
  and the fact that $\tilde{J}_0$ admits a spectrum that is not
  discrete.
\end{proof}
\begin{remark}
Using polar coordinates $(r,\theta)$ around the point $(-\frac{1}{4},0)$ it
is easy to check that
\begin{equation}\label{polarLambda}
  \Lambda_s = \left\{ (\lambda_x+i\lambda_y)\in\mathbb{C}: \lambda_x+\frac{1}{4}=r\cos\theta,\, \lambda_y= r\sin\theta,\, 0<r <
    -\frac{\cos \theta}{4s +2 },\, \theta \in \left[\frac{\pi}{2},
      \frac{3\pi}{2}\right] \right\}.
\end{equation}
Clearly then, for $s>s'$, $\Lambda_s \varsubsetneq \Lambda_{s'}$, and we
have $\bigcap_{s>0} \Lambda_s =\varnothing$, while the intersection of the
closures is given by $\bigcap_{s>0} \bar{\Lambda}_s=
\{-\frac{1}{4}\}$. Also, for all $s>0$, ${\rm dist}(
\sigma_s,0)=-\frac{1}{4}$, and $\max_{\lambda\in\sigma_s}|\lambda| \leq
\frac{3}{4}$.  It therefore follows that $\sigma_s$ is bounded away from the
zero and infinity. In view of Theorem~\ref{th_1} and Section~\ref{J_0_tau},
this is a fact of great significance in connection with the numerical
solution of equations~\eqref{Stilde} and~\eqref{Ntilde} by means of
Krylov-subspace iterative linear-algebra techniques;
see~\cite{BrunoLintner2} for details.
\end{remark}
\subsection{The operator $\tilde J^\tau_0$\label{J_0_tau}}
In our proof of Theorem~\ref{NStheorem} we need to consider not $\tilde J_0$
but a closely related operator, namely
\begin{equation}\label{j_0_tau_def}
\tilde J^\tau_0 = \tilde N^\tau_0 \tilde S^\tau_0
 \end{equation}
where defining (in a manner consistent with equation~\eqref{Z_op} below)
$\tilde{Z}_0[\gamma](\theta)=\gamma(\theta)\tau(\cos\theta)$, we have set
\begin{equation}\label{s_tau_def}
\tilde{S}_0^\tau[\gamma]=\tilde{S}_0\tilde{Z}_0[\gamma ],
\end{equation}
and
\begin{equation}\label{n0_tau_def}
\tilde{N}^\tau_0[\gamma] = \tilde{Z}_0^{-1}\tilde{N}_0[\gamma].
\end{equation}
It is easy to generalize equation~\eqref{S0reg},
Corollary~\ref{N0mapping} and Lemmas~\ref{J0bounded}
through~\ref{Jeigenvalues} to needed corresponding results for
$\tilde{S}_0^\tau$, $\tilde{N}_0^\tau$ and $\tilde J^\tau_0$; these
are given in the following Theorem.
\begin{theorem}\label{j_0_tau_thm} Let $s\geq 0$. Then,
\begin{itemize}
\item[(i)] The operator $\tilde{S}_0^\tau\;\,:\; H^{s}_e(2\pi)\to
  H^{s+1}_e(2\pi)$ is bicontinuous. 
\item[(ii)] The operator $\tilde{N}_0^\tau\;\,:\; H^{s+1}_e(2\pi)\to
  H^s_e(2\pi)$ is bicontinuous. 
\item[(iii)] The operator $\tilde J^\tau_0\;\,:\;H^s_e(2\pi)\to H^s_e(2\pi)$ is bicontinuous.
\item[(iv)] The point spectrum of $\tilde J^\tau_0:\; H^s_e(2\pi)\to
H^s_e(2\pi)$ is equal to the point spectrum $\sigma_s$ of $\tilde J_0$.
\end{itemize}
\end{theorem}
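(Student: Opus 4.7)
The central observation is that $\tilde{J}_0^\tau$ is similar to $\tilde{J}_0$: by associativity,
\[
\tilde{J}_0^\tau \;=\; \tilde{N}_0^\tau \tilde{S}_0^\tau \;=\; \tilde{Z}_0^{-1}\tilde{N}_0\tilde{S}_0 \tilde{Z}_0 \;=\; \tilde{Z}_0^{-1}\tilde{J}_0\tilde{Z}_0.
\]
My plan is therefore to reduce the four statements to the corresponding results already established for $\tilde{S}_0$, $\tilde{N}_0$, $\tilde{J}_0$, using the fact that the multiplier $\tilde{Z}_0$ is an isomorphism of $H^s_e(2\pi)$ onto itself for every $s\geq 0$.

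The preliminary step is to verify this multiplier property. Since $t\mapsto\mathbf{r}(t)$ is smooth and $\tau(t)=|\mathbf{r}'(t)|$ never vanishes on $[-1,1]$, the function $\theta\mapsto\tau(\cos\theta)$ is a smooth, strictly positive, even, $2\pi$-periodic function on the real line. Consequently, both $\tilde{Z}_0$ (pointwise multiplication by $\tau(\cos\theta)$) and $\tilde{Z}_0^{-1}$ (multiplication by $1/\tau(\cos\theta)$) preserve evenness and periodicity and map $H^m_e(2\pi)$ continuously into itself for every nonnegative integer $m$ (standard Leibniz rule); the continuity on $H^s_e(2\pi)$ for arbitrary $s\geq 0$ then follows by the Sobolev interpolation argument already invoked in the proof of Lemma~\ref{Cprop} (Theorem~8.13 of~\cite{Kress}). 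Since $\tilde{Z}_0$ and $\tilde{Z}_0^{-1}$ are inverses of one another, each is bicontinuous on $H^s_e(2\pi)$ for every $s\geq 0$.

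With this in hand, parts (i)--(iii) are immediate. For (i), $\tilde{S}_0^\tau=\tilde{S}_0\tilde{Z}_0$ is the composition of the bicontinuous map $\tilde{Z}_0:H^s_e(2\pi)\to H^s_e(2\pi)$ with the bicontinuous map $\tilde{S}_0:H^s_e(2\pi)\to H^{s+1}_e(2\pi)$ from Lemma~\ref{lemma1}. For (ii), $\tilde{N}_0^\tau=\tilde{Z}_0^{-1}\tilde{N}_0$ is the composition of the bicontinuous map $\tilde{N}_0:H^{s+1}_e(2\pi)\to H^s_e(2\pi)$ from Corollary~\ref{N0mapping} with the bicontinuous multiplier $\tilde{Z}_0^{-1}$ on $H^s_e(2\pi)$. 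For (iii), the conjugation identity together with Lemma~\ref{J0bounded} and Lemma~\ref{J0Inverse} yields that $\tilde{J}_0^\tau$ is bicontinuous from $H^s_e(2\pi)$ to itself.

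For (iv), the conjugation relation $\tilde{J}_0^\tau = \tilde{Z}_0^{-1}\tilde{J}_0\tilde{Z}_0$ on $H^s_e(2\pi)$ gives a one-to-one correspondence between eigenvectors: if $\tilde{J}_0 v=\lambda v$ with $v\in H^s_e(2\pi)\setminus\{0\}$ then $\tilde{Z}_0^{-1}v\in H^s_e(2\pi)\setminus\{0\}$ and $\tilde{J}_0^\tau(\tilde{Z}_0^{-1}v)=\lambda\,\tilde{Z}_0^{-1}v$, and conversely, if $\tilde{J}_0^\tau w=\lambda w$ with $w\in H^s_e(2\pi)\setminus\{0\}$ then $\tilde{Z}_0 w$ is a nonzero eigenvector of $\tilde{J}_0$ with eigenvalue $\lambda$. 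Hence the point spectrum of $\tilde{J}_0^\tau$ on $H^s_e(2\pi)$ coincides with that of $\tilde{J}_0$ on $H^s_e(2\pi)$, which was computed in Lemma~\ref{Jeigenvalues}. There is no real obstacle here; the only mildly delicate point is the Sobolev-interpolation step for the multiplier $\tilde{Z}_0$, and that is handled exactly as in the proof of Lemma~\ref{Cprop}.
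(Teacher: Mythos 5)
Your proposal is correct and follows essentially the same route as the paper: the conjugation identity $\tilde{J}_0^\tau=\tilde{Z}_0^{-1}\tilde{J}_0\tilde{Z}_0$ together with the fact that multiplication by the smooth, non-vanishing, even function $\tau(\cos\theta)$ is bicontinuous on $H^s_e(2\pi)$, reducing (i)--(iii) to Lemma~\ref{lemma1}, Corollary~\ref{N0mapping}, Lemmas~\ref{J0bounded} and~\ref{J0Inverse}, and (iv) to the eigenvector correspondence $v\mapsto\tilde{Z}_0^{-1}[v]$ with Lemma~\ref{Jeigenvalues}. Your explicit justification of the multiplier property (Leibniz rule plus interpolation) simply fills in a detail the paper treats as immediate.
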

\begin{proof}
In view of~\eqref{s_tau_def}, ~\eqref{n0_tau_def}, the ensuing relation
\begin{equation}\label{j_0_tau_gen}
\tilde J^\tau_0 = \tilde{Z}_0^{-1}\tilde J_0\tilde{Z}_0,
\end{equation}
and the fact that $\tau$ is smooth and non-vanishing, the proof of
points~{\em(i)},~{\em(ii)} and~{\em(iii)} is immediate.
Equation~\eqref{j_0_tau_gen} also shows that $(\lambda,v)$ is an
eigenvalue-eigenvector pair for $\tilde J_0$ if and only if
$(\lambda,\tilde{Z}_0^{-1}[v])$ is an eigenvalue-eigenvector pair for
$\tilde{J}_0^\tau$, and point~{\em(iv)} follows as well.
\end{proof}

\section{General Properties of the  Operators $\tilde S$ and
$\tilde N$ \label{SGeneral}} The proof of Theorem~\ref{NStheorem} results
from a perturbation argument involving Theorem~\ref{j_0_tau_thm} and the
results established in this section on the regularity and invertibility of
the operators $\tilde S$ and $\tilde N$ defined by equations~\eqref{ssdef}
and~\eqref{NNdef}.
\subsection{Bicontinuity of the operator $\tilde{S}$}
We seek to show that for all $s\geq 0$ the operator $\tilde{S}$
defined in equation~\eqref{ssdef} is a bicontinuous mapping between
$H^s_e(2\pi)$ into $H^{s+1}_e(2\pi)$. This is done in
Lemmas~\ref{S_bounded} and~\ref{Sinverse} below.
\begin{lemma}\label{S_bounded}
  Let $s\geq 0$. Then $\tilde{S}$ defines a bounded mapping from
  $H^s_e(2\pi)$ into $H^{s+1}_e(2\pi)$. Further, the difference
  $\tilde{S} - \tilde{S}^\tau_0$ (see equation~\eqref{s_tau_def})
  defines a continuous mapping from $H^s_e(2\pi)$ into
  $H^{s+3}_e(2\pi)$.
\end{lemma}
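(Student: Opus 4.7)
The plan is to decompose $\tilde{S}$ as the sum of the weighted Symm operator $\tilde{S}_0^\tau$ and an infinitely smoothing perturbation, and then invoke Theorem~\ref{j_0_tau_thm}(i). The starting point is the classical splitting of the Helmholtz Green's function,
\begin{equation*}
G_k(\mathbf{r},\mathbf{r}') = -\frac{1}{2\pi}\ln|\mathbf{r}-\mathbf{r}'| + A_k(\mathbf{r},\mathbf{r}'),
\end{equation*}
where $A_k$ is jointly $C^\infty$ in $(\mathbf{r},\mathbf{r}')$: this is immediate for $k=0$, and for $k>0$ it follows from the series representation of $H_0^{(1)}$, in which the logarithmic singularity is explicit and the remainder is an entire function of $|\mathbf{r}-\mathbf{r}'|^2$.

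Next, because the parametrization $\mathbf{r}(t)$ is smooth with non-vanishing derivative on $[-1,1]$, one may write $|\mathbf{r}(t)-\mathbf{r}(t')| = |t-t'|\, h(t,t')$ with $h$ smooth and strictly positive; substituting $t=\cos\theta$, $t'=\cos\theta'$ yields
\begin{equation*}
\ln|\mathbf{r}(\cos\theta)-\mathbf{r}(\cos\theta')| = \ln|\cos\theta-\cos\theta'| + \ln g(\cos\theta,\cos\theta'),
\end{equation*}
for some smooth positive $g$. Substituting these two identities into the definition~\eqref{ssdef} of $\tilde{S}$ and recalling the definition~\eqref{s_tau_def} of $\tilde{S}_0^\tau$ produces the operator decomposition $\tilde{S}=\tilde{S}_0^\tau+\tilde{R}$, where $\tilde{R}$ is integration against the kernel
\begin{equation*}
M(\theta,\theta') = \left[A_k(\mathbf{r}(\cos\theta),\mathbf{r}(\cos\theta')) - \tfrac{1}{2\pi}\ln g(\cos\theta,\cos\theta')\right]\tau(\cos\theta').
\end{equation*}
Because $M$ depends on its angular arguments only through $\cos\theta$ and $\cos\theta'$, it is automatically $C^\infty$ and is even and $2\pi$-periodic in each variable separately.

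For the second assertion of the lemma, differentiation under the integral combined with Cauchy-Schwarz gives the a priori estimate $\|\partial_\theta^m \tilde{R}[\gamma]\|_{L^\infty([0,2\pi])}\leq C_m \|\gamma\|_{L^2}$ for every $m\in\mathbb{N}$, and the output inherits the evenness and $2\pi$-periodicity of $M$ in its first argument. Hence $\tilde{R}:H^0_e(2\pi)\to H^m_e(2\pi)$ continuously for every $m$, and in particular $\tilde{R}:H^s_e(2\pi)\to H^{s+3}_e(2\pi)$ continuously for all $s\geq 0$. Combining this with the continuity $\tilde{S}_0^\tau:H^s_e(2\pi)\to H^{s+1}_e(2\pi)$ provided by Theorem~\ref{j_0_tau_thm}(i) gives the first claim.

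The main obstacle---which is mild but must be handled carefully---is the precise extraction of the logarithmic singularity across the diagonal $\theta=\theta'$ and the verification that the remainder $M$ is genuinely $C^\infty$ (including at the diagonal and at the endpoints $\theta,\theta'\in\{0,\pi\}$ where $\sin\theta$ vanishes) and inherits the required parity and periodicity. Once these facts are in place, the boundedness claims reduce to a direct application of Theorem~\ref{j_0_tau_thm}(i) together with standard smoothing-kernel estimates.
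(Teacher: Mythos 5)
There is a genuine gap, and it is precisely at the point you flag as the ``main obstacle'': your claimed splitting $G_k(\mathbf{r},\mathbf{r}') = -\frac{1}{2\pi}\ln|\mathbf{r}-\mathbf{r}'| + A_k(\mathbf{r},\mathbf{r}')$ with $A_k$ jointly $C^\infty$ is false for $k>0$. The series representation of the Hankel function gives $\frac{i}{4}H_0^{(1)}(k|\mathbf{r}-\mathbf{r}'|) = -\frac{1}{2\pi}\,J_0(k|\mathbf{r}-\mathbf{r}'|)\,\ln|\mathbf{r}-\mathbf{r}'| + (\text{smooth})$, i.e.\ the coefficient of the logarithm is the non-constant smooth factor $J_0(k|\mathbf{r}-\mathbf{r}'|)$, not $1$. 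If you force the coefficient to be constant, the remainder $A_k$ contains the term $-\frac{1}{2\pi}\bigl(J_0(k|\mathbf{r}-\mathbf{r}'|)-1\bigr)\ln|\mathbf{r}-\mathbf{r}'|$, which behaves like $|\mathbf{r}-\mathbf{r}'|^{2}\ln|\mathbf{r}-\mathbf{r}'|$ near the diagonal and is therefore only finitely differentiable there. Consequently your kernel $M$ is not $C^\infty$ across $\theta=\theta'$, the estimate $\|\partial_\theta^m\tilde R[\gamma]\|_{L^\infty}\leq C_m\|\gamma\|_{L^2}$ fails beyond low orders of $m$, and the conclusion that $\tilde R=\tilde S-\tilde S_0^\tau$ maps $H^0_e(2\pi)$ into $H^m_e(2\pi)$ for every $m$ is not available. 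Indeed this could not be true: the lemma only asserts a gain of three derivatives for the difference, which reflects exactly the $(t-t')^2\ln|t-t'|$ singularity of the remainder kernel (the infinite-smoothing picture is correct only in the special case $k=0$, where the remainder reduces to $-\frac{1}{2\pi}\ln h(t,t')$ with $h$ smooth and positive).

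The repair is essentially the paper's argument: keep the log coefficient as the smooth function $A_1(t,t')=-\frac{1}{2\pi}J_0(k|\mathbf{r}(t)-\mathbf{r}(t')|)$, and use that $J_0$ is even (a series in $z^2$) together with $|\mathbf{r}(t)-\mathbf{r}(t')|^2=(t-t')^2\times(\text{smooth})$ to expand $A_1(t,t')=-\frac{1}{2\pi}+\sum_{n=2}^{m+3}a_n(t)(t'-t)^n+O\bigl((t-t')^{m+4}\bigr)$, so that the difference kernel consists of terms $(\cos\theta'-\cos\theta)^n\ln|\cos\theta-\cos\theta'|$ with $n\geq 2$ plus a $C^{m+3}$ remainder. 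One then differentiates $\tilde S-\tilde S_0^\tau$ twice in $\theta$, rewrites the resulting logarithmic terms as smooth multiples of $\tilde S_0\tilde Z_\ell$ (hence bounded $H^s_e\to H^{s+1}_e$ by Lemma~\ref{lemma1}), treats the finitely smooth remainder by direct kernel estimates, and finishes with Sobolev interpolation; this yields exactly $\tilde S-\tilde S_0^\tau: H^s_e(2\pi)\to H^{s+3}_e(2\pi)$ and, combined with the bicontinuity of $\tilde S_0^\tau$, the first claim. Your overall strategy (perturbation of $\tilde S_0^\tau$) is the right one, but the crucial quantitative step is the finite-order expansion of the logarithm's coefficient starting at $(t-t')^2$, not an appeal to a $C^\infty$ remainder.
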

\begin{proof}
In view of equation~\eqref{Gk_def} and the expression
\[
H_0^1(z) = \frac{2i}{\pi}J_0(z)\ln(z) + R(z)
\]
for the Hankel function in terms of the Bessel function $J_0(z)$, the
logarithmic function and a certain entire function $R$, the kernel of
the operator $S_\omega$ (equation~\eqref{sOmegaDef}) can be cast in
the form
\begin{equation}
G_k(\mathbf{r}(t),\mathbf{r}(t'))=A_1(t,t')\ln|t-t'|+A_2(t,t'),
\end{equation}
where $A_1(t,t')$ and $A_2(t,t')$ are smooth functions. Further, since $J_0(z)$ is given by a series in powers of $z^2$, it follows that for
all $m\in \mathbb{N}$, the function $A_1$ can be
expressed in the form
\[
A_1(t,t')=-\frac{1}{2\pi} + \sum_{n=2}^{m+3} a_n(t)(t'-t)^n +
(t-t')^{m+4}\Lambda_{m+3}(t,t'),
\] 
where $\Lambda_{m+3}(t,t')$ is a smooth function of $t$ and $t'$. The
operator $\tilde{S}$ in equation~\eqref{ssdef} can thus be expressed in the
form
\begin{equation}\label{Sfact}
\begin{split}
\tilde{S}[\tilde \varphi](\theta)=\tilde{S}_0^\tau[\tilde\varphi](\theta) +
\sum\limits_{n=2}^{m+3} a_n(\cos\theta)\int_0^\pi(\cos\theta'- \cos\theta)^n
\ln|\cos\theta-\cos\theta'|\tilde\varphi(\theta')\tau(\cos\theta')d\theta'\\+\int_0^\pi
A_3(\cos\theta,\cos\theta')\tilde\varphi(\theta')\tau(\cos\theta')d\theta',
\end{split}
\end{equation}
where $A_3(\cos\theta,\cos\theta')$, which contains a logarithmic factor,
belongs to $C^{m+3}([0,2\pi]\times [0,2\pi])$.

Clearly, for $n\geq 2$, the second derivative $d^2/d\theta^2$ of the product
$(\cos\theta'- \cos\theta)^n \ln|\cos\theta-\cos\theta'|$ can be expressed
as a product $P_1(\cos\theta,\cos\theta')\ln|\cos\theta-\cos\theta'|+
P_2(\cos\theta,\cos\theta')$ where $P_1(t,t')$ and $P_2(t,t')$ are
polynomials. Collecting terms with the common factor $\cos^\ell\theta'$ we
then obtain
\begin{equation}\label{Sfact_2}
\frac{d^2}{d\theta^2}\left(\tilde{S} -
\tilde{S}_0^\tau\right)[\tilde\varphi](\theta) =\sum\limits_{\ell=0}^{m+1}
b_\ell(\cos\theta)\tilde S_0 \tilde Z_\ell [\tilde \varphi](\theta) +
\int_0^\pi A_4(\cos\theta,\cos\theta')\tilde\varphi(\theta')\tau(\cos\theta')d\theta',
\end{equation}
where $b_\ell(\cos\theta)$ is an even smooth function, where the
operator $\tilde Z_\ell: H^s_e(2\pi)\to H^s_e(2\pi)$
($s\in\mathbb{R}$) is given by
\begin{equation}\label{Z_op}
\tilde Z_\ell[\gamma](\theta')=\cos^\ell\theta'\;\tau(\cos\theta')\;
\gamma(\theta'),
\end{equation}
and where $A_4(\cos\theta,\cos\theta')\in C^{m+1}([0,2\pi]\times
[0,2\pi])$.  Now, in view of equation~\eqref{S0reg}, the first term on
the right-hand-side of equation~\eqref{Sfact_2} defines a bounded
operator from $H^s_e(2\pi)$ into $H^{s+1}_e(2\pi)$. On the other hand,
the derivatives of orders $k\leq (m+1)$ of the second term on the
right-hand-side of~(\ref{Sfact_2}), all reduce to integral operators
with bounded kernels, and thus map $L^2[0,2\pi]$ continuously into
$L^2[0,2\pi]$. It follows that the second term itself maps
continuously $H^0_e(2\pi)$ (and hence $H^{m}_e(2\pi)$) into
$H^{m+1}_e(2\pi)$, and the lemma follows for integer values $s=m$. The
extension for real values $s > 0$ follows directly by
interpolation~\cite[Theorem 8.13]{Kress}.
\end{proof}

The following lemma and its corollary provide a direct link, needed
for our proof of Lemma~\ref{Sinverse}, between the spaces
$H^s_e(2\pi)$ under consideration here and the original space
$\tilde{H}^{-\frac{1}{2}}(\Gamma)$ appearing in
equations~\eqref{Smapping}.

\begin{lemma}\label{LemmaStephanSpace0}
Let $s>0$, and assume $\tilde\varphi\in H^s_e(2\pi)$. Then the function
\begin{equation}
w(\xi)=\frac{1}{\pi}\int_0^\pi \tilde\varphi(\theta) e^{-i\xi\cos\theta}d\theta.
\end{equation} 
satisfies 
\begin{equation}
  \int_\mathbb{R}\frac{|w(\xi)|^2}{(1+|\xi|^2)^\frac{1}{2}}d\xi < \infty.
\end{equation}
\end{lemma}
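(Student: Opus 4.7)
The plan is to reduce the claim to a statement about Bessel functions by exploiting the cosine basis of $H^s_e(2\pi)$ together with a classical integral representation. Writing $\tilde\varphi(\theta) = \tfrac{a_0}{2}+\sum_{m\geq 1}a_m\cos(m\theta)$ and using the standard identity
\begin{equation*}
\int_0^\pi \cos(m\theta)\,e^{-i\xi\cos\theta}\,d\theta = \pi\,(-i)^m J_m(\xi),\qquad m\geq 0,
\end{equation*}
which follows from the generating function $e^{i\xi\cos\theta}=\sum_{n\in\mathbb{Z}} i^n J_n(\xi)e^{in\theta}$ and the parity relation $J_{-n}=(-1)^n J_n$, I would obtain the pointwise expansion
\begin{equation*}
w(\xi) \;=\; \frac{a_0}{2}J_0(\xi) + \sum_{m\geq 1}a_m(-i)^m J_m(\xi).
\end{equation*}
Since $\tilde\varphi\in L^2[0,\pi]$ and $|e^{-i\xi\cos\theta}|\leq 1$, this representation is justified at each fixed $\xi$ by termwise integration against the $L^2$-convergent cosine series.

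Next I would estimate the scalar quantities $I_m := \int_{\mathbb{R}} |J_m(\xi)|^2 (1+\xi^2)^{-1/2}\,d\xi$. Splitting at $|\xi|=1$, the bound $|J_m(\xi)|\leq (|\xi|/2)^m e^{\xi^2/4}/m!$ coming from the power series shows that the contribution from $|\xi|\leq 1$ is $O(1/(m!)^2)$, which is negligible. On the complement $|\xi|\geq 1$ one has $(1+\xi^2)^{-1/2}\leq |\xi|^{-1}$, so the classical Weber identity
\begin{equation*}
\int_0^\infty \frac{J_m(\xi)^2}{\xi}\,d\xi \;=\; \frac{1}{2m},\qquad m\geq 1,
\end{equation*}
yields $I_m \leq C/m$ for $m\geq 1$; that $I_0$ is finite follows from the asymptotic $J_0(\xi)\sim \sqrt{2/(\pi|\xi|)}\cos(\xi-\pi/4)$.

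The proof is then completed by two applications of Cauchy--Schwarz. Writing $|w(\xi)|\leq \sum_{m\geq 0}|c_m||J_m(\xi)|$ with $c_0=a_0/2$ and $c_m=a_m(-i)^m$ for $m\geq 1$, integrating, and estimating the mixed Bessel integrals by $\int |J_m J_n|(1+\xi^2)^{-1/2}\,d\xi \leq \sqrt{I_m I_n}$, one obtains
\begin{equation*}
\int_{\mathbb{R}}\frac{|w(\xi)|^2}{\sqrt{1+\xi^2}}\,d\xi \;\leq\; \Bigl(\sum_{m\geq 0}|c_m|\sqrt{I_m}\,\Bigr)^{\!2}.
\end{equation*}
A second Cauchy--Schwarz, now with weight $m^{s}$, gives
\begin{equation*}
\sum_{m\geq 1}\frac{|a_m|}{\sqrt{m}} \;\leq\; \Bigl(\sum_{m\geq 1}m^{-2s-1}\Bigr)^{\!1/2}\Bigl(\sum_{m\geq 1}|a_m|^2 m^{2s}\Bigr)^{\!1/2},
\end{equation*}
and the first factor on the right is finite \emph{exactly} when $s>0$. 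Combining this with the trivial contribution of the $m=0$ term bounds the integral by a multiple of $\|\tilde\varphi\|_{H^s_e(2\pi)}^2$, which is the desired conclusion.

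The main obstacle is the sharp decay estimate $I_m = O(1/m)$: uniform bounds such as Landau's inequality $|J_m(\xi)|\leq Cm^{-1/3}$ only produce $I_m = O(m^{-2/3})$, which would still close the proof but at a cost of narrowing the admissible range of $s$. The exact Weber identity is therefore essential for capturing the natural threshold, and the hypothesis $s>0$ enters the argument precisely and only through the convergence of $\sum m^{-2s-1}$ in the last Cauchy--Schwarz step.
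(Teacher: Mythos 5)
Your proposal is correct and follows essentially the same route as the paper's proof: expansion in the cosine basis, the Hansen--Bessel identity $\int_0^\pi \cos(m\theta)e^{-i\xi\cos\theta}d\theta=\pi(-i)^mJ_m(\xi)$, the Weber integral $\int_{\mathbb{R}}|J_m(\xi)|^2|\xi|^{-1}d\xi=1/m$, and Cauchy--Schwarz with the $H^s$ weight, with $s>0$ entering only through the convergence of $\sum m^{-1-2s}$. The only difference is bookkeeping: the paper applies Cauchy--Schwarz pointwise in $\xi$ before integrating, whereas you integrate first and then apply Cauchy--Schwarz twice on the coefficient sums, which yields the same bound.
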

\begin{proof}
  Using the $L^2[0,\pi]$-convergent cosine expansion
\begin{equation}
\tilde\varphi(\theta) = \sum_{n=0}^\infty a_n\cos\theta
\end{equation}
we obtain
\begin{equation}\label{fhat2}
  w(\xi)=\sum_{n=0}^\infty \frac{a_n}{\pi}\int_0^\pi \cos n\theta
 e^{-i\xi\cos\theta}d\theta.
 \end{equation}
Since 
\begin{equation}
  \int_0^\pi \cos n\theta e^{-i\xi\cos\theta}d\theta =
  \frac{1}{2}\int_{-\pi}^\pi e^{in\theta}e^{-i\xi\cos\theta}d\theta =
  \frac{1}{2}e^{\frac{in\pi}{2}}\int_{-\pi}^\pi e^{-in\theta}e^{-i\xi\sin\theta}d\theta = \pi i^n J_n(-\xi),
\end{equation}
(where, denoting by $J_n(\xi)$ the Bessel function of order $n$, the
last identity follows from~\cite[8.411 p. 902]{Gradshteyn}), we see
that equation~\eqref{fhat2} can be re-expressed in the form
\begin{equation}
  w(\xi ) = \sum\limits_{n=0}^\infty i^n a_n  J_n(-\xi) = \sum\limits_{n=0}^\infty \left( \sqrt{1 +n^{2s}}
    \; i^n a_n \right)\; \left( \frac{J_n(-\xi )}{\sqrt{1 +n^{2s}}}\right).
 \end{equation}
 In view of the Cauchy-Schwartz inequality we thus obtain
\begin{equation}
\left|w(\xi)\right|^2 \leq \left(\sum\limits_{n= 0}^\infty (1 +
n^{2s})|a_n|^2\right) \left(\sum\limits_{n=0}^\infty \frac{|J_n(\xi)|^2}{1+ n^{2s}}\right) \leq \left(
\sum\limits_{n=1}^\infty\frac{|J_n(\xi)|^2}{n^{2s}}
+|J_0(\xi)|^2\right)\|\tilde\varphi \|^2_s.
\end{equation}
Since $0\leq |\xi|/(1+|\xi|^2)^{1/2}\leq 1$, it follows that
\begin{equation}\begin{split}
\int_\mathbb{R}\frac{|w(\xi)|^2}{(1 + |\xi|^2)^\frac{1}{2}} d\xi\leq \left(
 \sum\limits_{n=1}^\infty
 \left(\frac{1}{n^{2s}}\int_\mathbb{R}\frac{|J_n(\xi)|^2}{(1 +
 |\xi|^2)^\frac{1}{2}}d\xi\right)+\int_\mathbb{R}\frac{|J_0(\xi)|^2}{(1+|\xi|^2)^\frac{1}{2}}d\xi\right)\|\tilde\varphi
 \|^2_s \\\leq \left(\sum\limits_{n=1}^\infty
 \left(\frac{1}{n^{2s}}\int_\mathbb{R}\frac{|J_n(\xi)|^2}{|\xi|}d\xi\right)+\int_\mathbb{R}\frac{|J_0(\xi)|^2}{(1+|\xi|^2)^\frac{1}{2}}d\xi\right)\|\tilde\varphi
 \|^2_s.
\end{split}
\end{equation}
Further, in view of \cite[6.574, eq 2.]{Gradshteyn}, the integral
involving $J_n$ can be computed exactly for $n\geq 1$:
\begin{equation}
\int_\mathbb{R} \frac{|J_n(\xi)|^2}{|\xi|}d\xi =\frac{1}{n}.
\end{equation}
It thus follows that
\begin{equation}
\int_\mathbb{R} \frac{| w(\xi) |^2}{(1 +|\xi |^2)^\frac{1}{2}}d\xi
\leq C_s \|\tilde\varphi \|^2_s < \infty
\end{equation} 
where
 \begin{equation}
 C_s= \sum\limits_{n=1}^\infty \frac{1}{n^{1+2s}} +
\int_\mathbb{R}\frac{|J_0(\xi)|^2}{(1+|\xi|^2)^\frac{1}{2}}d\xi.
\end{equation}
\end{proof}

\begin{corollary}\label{LemmaStephanSpace1} 
  Let $s>0$, $\tilde\varphi\in H^{s}_e(2\pi)$, $\varphi(t) =
  \tilde\varphi(\arccos(t))$, $\varphi:[-1,1]\to\mathbb{C}$,
  $\alpha(\bf{p}) = \varphi(\mathbf{r}^{-1}(\bf{p}))$ and $W(\bf{p}) =
  \omega(\mathbf{r}^{-1}(\bf{p}))$. Then, the function
  $F=\frac{\alpha}{W}$ is an element of
  $\tilde{H}^{-\frac{1}{2}}(\Gamma)$.
\end{corollary}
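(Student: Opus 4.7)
The plan is to reduce the statement to Lemma~\ref{LemmaStephanSpace0} by recognizing $\pi w(\xi)$ as the Fourier transform of $F$ pulled back to the parameter interval $[-1,1]$, and then transporting the resulting $H^{-1/2}$ regularity to $\tilde{H}^{-1/2}(\Gamma)$ via the parametrization $\mathbf{r}$.

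First I would perform the change of variables $t=\cos\theta$ (so that $dt = -\sin\theta\, d\theta$ and $\sqrt{1-t^2}=\sin\theta$ on $[0,\pi]$) in the defining integral of $w(\xi)$ to obtain
\begin{equation*}
\pi\, w(\xi) = \int_{-1}^1 \frac{\varphi(t)}{\sqrt{1-t^2}}\, e^{-i\xi t}\, dt .
\end{equation*}
Denote by $F_0$ the parameter-space distribution $F_0(t)=\varphi(t)/\sqrt{1-t^2}$ on $(-1,1)$, extended by zero to the rest of $\mathbb{R}$. Then the displayed formula identifies $\pi w$ with the Fourier transform $\widehat{F_0}$, and Lemma~\ref{LemmaStephanSpace0} shows that $\int_{\mathbb R} |\widehat{F_0}(\xi)|^2 (1+|\xi|^2)^{-1/2}\, d\xi < \infty$. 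This is precisely the Fourier characterization of membership in $H^{-1/2}(\mathbb R)$, so $F_0 \in H^{-1/2}(\mathbb R)$; since $F_0$ is supported in $[-1,1]$, it belongs to $\tilde H^{-1/2}([-1,1])$.

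Next I would transport this regularity from the parameter interval to $\Gamma$ itself. Since $\mathbf{r}\colon [-1,1]\to\Gamma$ is smooth with $\tau(t)=|\mathbf{r}'(t)|$ nowhere vanishing, it extends to a smooth regular parametrization $\gamma\colon \mathbb S^1 \to \dot G_1$ of a smooth closed curve $\dot G_1 \supset \Gamma$, in such a way that $\gamma$ restricted to a sub-arc $I \subset \mathbb S^1$ coincides with $\mathbf{r}\circ\phi$ for some smooth diffeomorphism $\phi\colon I \to [-1,1]$. Pulling $F$ back to $\mathbb{S}^1$ under $\gamma$ yields a distribution supported in $I$ that equals $F_0\circ\phi$ there. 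By the standard diffeomorphism invariance of compactly-supported Sobolev distributions, $F_0\circ\phi$ belongs to $\tilde H^{-1/2}(I)$, and hence to $H^{-1/2}(\mathbb{S}^1)$. Identifying $H^{-1/2}(\mathbb{S}^1)$ with $H^{-1/2}(\dot G_1)$ through $\gamma$, and recalling that $F$ is supported in $\overline{\Gamma}$, Definition~\ref{htil_def} gives $F \in \tilde H^{-1/2}(\Gamma)$.

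The main obstacle is the transport step: one must check that the Fourier-based characterization of $H^{-1/2}(\mathbb{R})$ and the manifold-based definition of $H^{-1/2}(\dot G_1)$ agree on the localized object $F_0$. This is standard but requires a little care; the cleanest route is to note that the endpoints of $\Gamma$ are at positive distance from $\dot G_1\setminus\overline{\Gamma}$, which allows one to insert a smooth cutoff supported in a neighborhood of $\overline{\Gamma}$ in $\dot G_1$ and reduce the assertion to the classical diffeomorphism invariance of the local Sobolev space $H^{-1/2}_{\mathrm{loc}}(\mathbb{R})$ together with the obvious isomorphism of $H^{-1/2}$ on a small open subset of $\dot G_1$ with $H^{-1/2}$ of an open subset of $\mathbb{R}$.
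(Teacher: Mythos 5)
Your proposal is correct and follows essentially the same route as the paper: change variables $t=\cos\theta$ to identify $\pi w(\xi)$ with the Fourier transform of $\varphi/\omega$ extended by zero, and invoke Lemma~\ref{LemmaStephanSpace0} to conclude membership in $\tilde H^{-\frac12}[-1,1]$. The only difference is that you spell out the transport from the flat parameter interval to the curved arc $\Gamma$ (cutoff plus diffeomorphism invariance of $H^{-\frac12}$), a step the paper compresses into the single remark that ``it suffices to show the result for $\Gamma=[-1,1]$.''
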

\begin{proof}
  It suffices to take show that $f = \varphi/\omega \in
  \tilde{H}^{-\frac{1}{2}}[-1,1]$ for the case $\Gamma =
  [-1,1]$. Extending $f$ by $0$ outside the interval $[-1,1]$, the
  Fourier transform of $f$ is given by
\begin{equation}
  \hat{f}(\xi)=\int_{-\infty}^\infty f(t)e^{-i\xi t }dt=\int_{-1}^1
  \frac{\varphi(t)e^{-i\xi t }}{\omega(t)} dt=\int_0^\pi
  \tilde\varphi(\theta) e^{-i\xi\cos\theta}d\theta,
\end{equation}
since $\omega(t)=\sqrt{1-t^2}$ in the present case.  The Corollary now
follows from Lemma~\ref{LemmaStephanSpace0}. \end{proof}

\begin{lemma}\label{Sinverse} For all $s > 0$ the operator
  $\tilde{S}: H^s_e(2\pi)\to H^{s+1}_e(2\pi)$ is invertible, and the
  inverse $\tilde{S}^{-1}: H^{s+1}_e(2\pi)\to H^s_e(2\pi)$ is a
  bounded operator.
\end{lemma}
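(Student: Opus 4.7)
My plan is to deduce the lemma by combining a compactness-plus-Fredholm argument with the injectivity of the classical unweighted single-layer operator $\mathbf{S}$ on $\tilde H^{-1/2}(\Gamma)$.

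First, I would decompose
\[
\tilde S \;=\; \tilde S_0^{\tau} \;+\; (\tilde S - \tilde S_0^{\tau}),
\]
where $\tilde S_0^{\tau}:H^s_e(2\pi)\to H^{s+1}_e(2\pi)$ is bicontinuous by Theorem~\ref{j_0_tau_thm}(i), and where, by the second assertion of Lemma~\ref{S_bounded}, the remainder maps $H^s_e(2\pi)$ continuously into $H^{s+3}_e(2\pi)$. Composing with the compact Sobolev embedding $H^{s+3}_e(2\pi)\hookrightarrow H^{s+1}_e(2\pi)$ shows that $\tilde S-\tilde S_0^{\tau}$ is compact as an operator into $H^{s+1}_e(2\pi)$. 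Factoring,
\[
\tilde S \;=\; \tilde S_0^{\tau}\Bigl(I + (\tilde S_0^{\tau})^{-1}(\tilde S-\tilde S_0^{\tau})\Bigr),
\]
the operator $(\tilde S_0^{\tau})^{-1}(\tilde S-\tilde S_0^{\tau})$ is then a compact endomorphism of $H^s_e(2\pi)$. By the Fredholm alternative, $\tilde S$ is invertible with bounded inverse from $H^{s+1}_e(2\pi)$ to $H^s_e(2\pi)$ if and only if it is injective.

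To establish injectivity, I would pull back a putative null element to the geometric setting. Suppose $\tilde\varphi\in H^s_e(2\pi)$ satisfies $\tilde S[\tilde\varphi]=0$. Using the change of variables linking \eqref{ssdef}, \eqref{sOmegaDef}, \eqref{S_param} and \eqref{Somega}, and introducing $\varphi(t)=\tilde\varphi(\arccos t)$, $\alpha=\varphi\circ\mathbf r^{-1}$, this is equivalent to
\[
\mathbf{S}\!\left[\frac{\alpha}{\omega}\right](\mathbf r(\cos\theta)) \;=\; 0\qquad\text{for all }\theta\in[0,\pi],
\]
i.e.\ $\mathbf{S}[\alpha/\omega]=0$ on $\Gamma$. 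By Corollary~\ref{LemmaStephanSpace1}, which requires precisely the hypothesis $s>0$, the density $\alpha/\omega$ lies in $\tilde H^{-1/2}(\Gamma)$. The classical mapping result \eqref{Smapping} asserts that $\mathbf{S}:\tilde H^{-1/2}(\Gamma)\to H^{1/2}(\Gamma)$ is bicontinuous, hence in particular injective, so $\alpha/\omega=0$; since $\omega>0$ on the interior of $\Gamma$ this forces $\alpha=0$ there, hence $\tilde\varphi=0$.

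Combining the two parts, $I+(\tilde S_0^{\tau})^{-1}(\tilde S-\tilde S_0^{\tau})$ is an injective compact perturbation of the identity, hence bicontinuous on $H^s_e(2\pi)$, and then $\tilde S=\tilde S_0^{\tau}(I+\cdots)$ is bicontinuous from $H^s_e(2\pi)$ to $H^{s+1}_e(2\pi)$ by Theorem~\ref{j_0_tau_thm}(i). The boundedness of the inverse is automatic from Remark~\ref{inv_is_cont}. I expect the main subtlety to be the injectivity step: one must carefully verify that the $H^s_e(2\pi)$ null function, which \emph{a priori} lives in a weighted, periodized function space with no obvious embedding into $\tilde H^{-1/2}(\Gamma)$, is transferred into the functional framework of \eqref{Smapping} through Corollary~\ref{LemmaStephanSpace1}, so that the classical uniqueness theorem for the open-arc single-layer equation can be invoked.
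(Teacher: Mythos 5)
Your proposal is correct and follows essentially the same route as the paper: the factorization $\tilde S=\tilde S_0^{\tau}\bigl(I+(\tilde S_0^{\tau})^{-1}(\tilde S-\tilde S_0^{\tau})\bigr)$, compactness of the perturbation via Lemma~\ref{S_bounded} and the Sobolev embedding, injectivity transferred to the classical setting through Corollary~\ref{LemmaStephanSpace1} and the injectivity of the map~\eqref{Smapping}, and the Fredholm alternative to conclude bicontinuity. The only cosmetic difference is that you invoke Theorem~\ref{j_0_tau_thm}(i) for the bicontinuity of $\tilde S_0^{\tau}$ where the paper deduces it directly from Lemma~\ref{lemma1} and the smoothness of $\tau$.
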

\begin{proof}
  Let $s> 0$ be given.  From Lemma~\ref{lemma1} we know
  $\tilde{S_0}: H^s_e(2\pi)\to H^{s+1}_e(2\pi)$ is a continuously
  invertible operator. The same clearly holds for $\tilde{S}_0^\tau$
  as well, and we may write
\begin{equation}\label{s_tau}
\tilde{S} =\tilde{S_0^\tau}\left( I +
\left(\tilde{S_0^\tau}\right)^{-1}(\tilde{S}-\tilde{S_0^\tau})\right).
\end{equation}
It follows from Lemma~\ref{S_bounded} that the operator
$(\tilde{S}_0^\tau)^{-1}(\tilde{S}-\tilde{S_0^\tau})$ is bounded from
$H^s_e(2\pi)$ into $H^{s+1}_e(2\pi)$, and therefore, in view of the
Sobolev embedding theorem it defines a compact mapping from
$H^s_e(2\pi)$ into itself. Further, in view of
Corollary~\ref{LemmaStephanSpace1} and the injectivity of the
mapping~\eqref{Smapping} it follows that the operator
$\tilde{S}:H^s_e(2\pi)\to H^{s+1}_e(2\pi)$ is injective, and
therefore, so is
\begin{equation}\label{FredholmReduction}
\left(\tilde{S_0^\tau} \right)^{-1} \tilde{S} =
 I+\left(\tilde{S_0^\tau}\right)^{-1}(\tilde{S}-\tilde{S_0^\tau}):
 H^s_e(2\pi)\to H^{s}_e(2\pi).
 \end{equation}
A direct application of the Fredholm theory thus shows that the
operator~\eqref{FredholmReduction} is continuously invertible, and the lemma
follows.
\end{proof}

\subsection{Bicontinuity of the operator $\tilde{N}$}
To study the mapping properties of the operator $\tilde{N}$ we rely on
Lemma~\ref{Nfact_lemma} below where, as in~\cite{Monch}, the operator
$\tilde{N}$ is re-cast in terms of an expression which involves tangential
differential operators (cf.  also~\cite[Th. 2.23]{ColtonKress1} for the
corresponding result for closed surfaces). The needed relationships between
normal vectors, tangent vectors and parametrizations used are laid down in
the following definition.
\begin{definition}\label{tangent} 
  For a given (continuous) selection of the normal vector
  $\textbf{n}=\textbf{n}(\textbf{r})$ on $\Gamma$, the tangent vector
  $\textbf{t}(\textbf{r})$ is the unit vector that results from a $90^\circ$
  clockwise rotation of $\textbf{n}(\textbf{r})$. Throughout this paper it
  is further assumed that the parametrization $\textbf{r} = \textbf{r}(t)$
  of the curve $\Gamma$ has been selected in such a way that
  \begin{equation}
    \frac{d\textbf{r}}{dt}(t) = \left| \frac{d\textbf{r}}{dt} \right| \textbf{t}(\textbf{r}(t)).
  \end{equation}
\end{definition}
\begin{lemma}\label{Nfact_lemma}
  For $\varphi \in C^\infty(\Gamma)$, and for $t \in (-1,1)$, the
  quantity $N_\omega[\varphi](t)$ defined by
  equation~\eqref{NOmegaDef} can be expressed in the form
\begin{equation}\label{NomegaFact}
N_\omega[\varphi](t)=N_\omega^g[\varphi](t)+N_\omega^{pv}[\varphi](t)
\end{equation}
where \begin{equation}\label{NomegaG}
  N_\omega^g[\varphi](t)=k^2\int_{-1}^1
  G_k(\mathbf{r}(t),\mathbf{r}(t'))\;\varphi(t')\;
  \tau(t')\;\sqrt{1-t'^2}\;\mathbf{n}_t \cdot
  \mathbf{n}_{t'}\;dt',\end{equation} and where
\begin{equation}\label{NomegaPV}
  N_\omega^{pv}[\varphi](t)=\frac{1}{\tau(t)}\frac{d}{dt}\left(\int_{-1}^1
    G_k(\mathbf{r}(t),\mathbf{r}(t'))\;\frac{d}{dt'} \left(
      \varphi(t')\;\sqrt{1-t'^2}\right)dt'\right).
\end{equation}
\end{lemma}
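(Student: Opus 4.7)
The plan is to derive the claimed identity by means of a two-dimensional version of Maue's formula followed by an integration by parts in the inner variable $t'$; the vanishing of the weight $\sqrt{1-t'^2}$ at the endpoints will eliminate boundary contributions, and the outer normal derivative will reduce to a tangential $t$-derivative pulled out of the integral sign.

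More concretely, I first justify bringing the $z$-derivative inside the integral, interpreting the resulting hypersingular kernel as a finite-part/limit of appropriately regularized integrals. This converts the defining expression for $N_\omega[\varphi](t)$ into an integral whose kernel is $\partial_{\mathbf{n}_t}\partial_{\mathbf{n}_{t'}} G_k(\mathbf{r}(t),\mathbf{r}(t'))$. The next step is to establish the pointwise algebraic identity
\begin{equation*}
\frac{\partial^2 G_k(\mathbf{r},\mathbf{r}')}{\partial\mathbf{n}_{\mathbf{r}}\,\partial\mathbf{n}_{\mathbf{r}'}}
= -\,\frac{\partial^2 G_k(\mathbf{r},\mathbf{r}')}{\partial\mathbf{t}_{\mathbf{r}}\,\partial\mathbf{t}_{\mathbf{r}'}}
+ k^2\,(\mathbf{n}_{\mathbf{r}}\cdot\mathbf{n}_{\mathbf{r}'})\,G_k(\mathbf{r},\mathbf{r}'),
\qquad \mathbf{r}\neq\mathbf{r}',
\end{equation*}
valid in two dimensions. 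To verify it, I would expand both sides in the orthonormal frame $\{\mathbf{n}_{\mathbf{r}},\mathbf{t}_{\mathbf{r}}\}$ acting on the Hessian of $G_k$ in one variable (using $\nabla_{\mathbf{r}'}G_k=-\nabla_{\mathbf{r}}G_k$ since $G_k$ depends only on $\mathbf{r}-\mathbf{r}'$), and observe that the sum of the two mixed second derivatives collapses to $(\mathbf{n}_{\mathbf{r}}\cdot\mathbf{n}_{\mathbf{r}'})\,\mathrm{tr}(H)$, which in turn equals $-k^2(\mathbf{n}_{\mathbf{r}}\cdot\mathbf{n}_{\mathbf{r}'})\,G_k$ by the Helmholtz equation $\Delta G_k+k^2 G_k=0$ off the diagonal.

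The $k^2$ term immediately contributes $N_\omega^g[\varphi](t)$ as written in equation~\eqref{NomegaG}. For the remaining piece, I rewrite the tangential derivatives using Definition~\ref{tangent}: since $\mathbf{t}(\mathbf{r}(t))=\tau(t)^{-1}\,d\mathbf{r}/dt$, one has $\partial_{\mathbf{t}_{\mathbf{r}(t)}}F(\mathbf{r}(t),\mathbf{r}(t'))=\tau(t)^{-1}\partial_t F$ and similarly in $t'$. The resulting contribution reads
\begin{equation*}
-\int_{-1}^1 \frac{1}{\tau(t)\tau(t')}\,\frac{\partial^2 G_k(\mathbf{r}(t),\mathbf{r}(t'))}{\partial t\,\partial t'}\,\varphi(t')\,\tau(t')\sqrt{1-t'^2}\,dt'.
\end{equation*}
I then integrate by parts once in $t'$: the boundary terms vanish because $\sqrt{1-t'^2}$ does so at $t'=\pm 1$, and one is left with $\tau(t)^{-1}\int_{-1}^1\partial_t G_k\,(d/dt')[\varphi(t')\sqrt{1-t'^2}]\,dt'$. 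Finally, the outer $t$-derivative is moved outside the integral, producing exactly $N_\omega^{pv}[\varphi](t)$ of equation~\eqref{NomegaPV}, and the two pieces add up to the claimed decomposition.

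The principal technical obstacle is the rigorous justification of the two manipulations that involve differentiation under the integral sign, namely bringing $\lim_{z\to 0^+}\partial_z$ inside to produce the hypersingular kernel, and pulling the outer $d/dt$ outside the regularized tangential expression at the end. Both need to be handled by a standard excision-and-limit argument: remove an $\varepsilon$-neighborhood of $t'=t$, perform the operations (where all kernels are smooth and Fubini/Leibniz apply), and pass to the limit $\varepsilon\to 0^+$ after the integration by parts, noting that the $\sqrt{1-t'^2}$ weight controls the endpoint terms while the smoothness of $\varphi\in C^\infty(\Gamma)$ and the logarithmic nature of the singularity of $G_k$ ensure convergence of the interior limits. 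Since an analogous closed-curve argument is standard (see Colton--Kress and M\"onch as cited), the only genuinely new ingredient is the verification that the weight $\sqrt{1-t'^2}$ suppresses the endpoint contributions so that the open-arc identity takes the same clean form as its closed-curve counterpart; this is also the reason the present lemma is deferred to Appendix~\ref{Nfact_appendix} rather than proved in line.
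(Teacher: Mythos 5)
Your derivation is correct in substance, but it takes a genuinely different route from the paper's Appendix~\ref{Nfact_appendix}. You work directly on the curve: you pass from the limit definition~\eqref{NOmegaDef} to a finite-part integral with kernel $\partial^2_{\mathbf{n}_t\mathbf{n}_{t'}}G_k$, use the two-dimensional Maue kernel identity (which is indeed valid: for orthonormal pairs, $\mathbf{n}^T H\,\mathbf{n}'+\mathbf{t}^T H\,\mathbf{t}'=(\mathbf{n}\cdot\mathbf{n}')\,\mathrm{tr}\,H$ with $\mathrm{tr}\,H=-k^2G_k$ off the diagonal), and integrate by parts in $t'$, the weight $\sqrt{1-t'^2}$ killing the endpoint terms. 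The paper never forms a hypersingular integral on $\Gamma$: it writes the weighted double-layer potential as $-\mathrm{div}\,\mathbf{E}$ with $\mathbf{E}$ the vector single-layer potential~\eqref{vecE}, applies $\mathrm{grad}\,\mathrm{div}\,\mathbf{E}=-k^2\mathbf{E}+\left(\partial_y\,\mathrm{curl}\,\mathbf{E},\,-\partial_x\,\mathrm{curl}\,\mathbf{E}\right)$ off the curve, performs the same integration by parts there (all kernels smooth, no excision needed), and only then lets $\mathbf{r}\to\mathbf{r}(t)$, the normal component of the rotated gradient becoming $-\tau(t)^{-1}\,dA/dt$ by the continuity of tangential derivatives of single-layer potentials \cite[Theorem 2.17]{ColtonKress1}. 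What the paper's route buys is that this classical continuity theorem is the only nontrivial analytic input; the steps you defer to a ``standard excision-and-limit argument''---equivalence of the limit definition with the Hadamard finite part, integration by parts of a finite-part integral with cancellation of the $O(1/\varepsilon)$ terms at $t'=t\pm\varepsilon$, and moving $d/dt$ across the resulting principal-value integral to reach~\eqref{NomegaPV}---are precisely what the off-curve manipulation avoids. Your route is the classical Maue regularization and can be completed (it is essentially the closed-curve argument of Kress/M\"onch plus your correct observation that the vanishing weight suppresses endpoint contributions), but as written those interchanges are asserted rather than proved, so you would need to carry out the excision bookkeeping or cite the precise closed-curve statements being adapted.
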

\begin{proof}
  See Appendix~\ref{Nfact_appendix},
  cf.~\cite{Kress,ColtonKress1,Monch}.
\end{proof}


In order to continue with our treatment of the operator $\tilde{N}$ we
note that, using the changes of variables $t=\cos\theta$ and
$t'=\cos\theta'$ in equations~\eqref{NomegaG} and~\eqref{NomegaPV}
together with the notation~\eqref{phi_psi_theta}, for $\varphi\in
C^\infty(\Gamma)$ and for $\theta\in (0,\pi)$ we obtain
\begin{equation}\label{Nfact}
  \tilde{N}[\tilde\varphi]=\tilde{N}^{g}[\tilde\varphi]+\tilde{N}^{pv}[\tilde\varphi],
\end{equation}
where
\begin{equation}\label{Ng}
\tilde{N}^g[\tilde\varphi](\theta)=k^2\int_{0}^\pi
G_k(\mathbf{r}(\cos\theta),\mathbf{r}(\cos\theta'))
\;\tilde\varphi(\theta')\; \tau(\cos\theta')\;\sin^2\theta'\;
\mathbf{n}_\theta \cdot \mathbf{n}_{\theta'}\;d\theta',
\end{equation}
and where, taking into account equations~\eqref{D0Def} and~\eqref{T0Def},
\begin{equation}\label{Npv}
\tilde{N}^{pv}[\tilde\varphi](\theta) =
\frac{1}{\tau(\cos\theta)}\left(\tilde D_0 \tilde{S} \tilde
T_0^\tau\right)[\tilde\varphi](\theta),
\end{equation}
with 
\begin{equation}\label{t_0_tau}
\tilde T_0^\tau[\tilde\varphi](\theta)=\frac{1}{\tau(\cos\theta)}T_0[\tilde
\varphi](\theta).
\end{equation}

\begin{lemma}\label{N_pv_bounded}
Let $s \geq 0$. The operator $\tilde{N}^{pv}$ defines a bounded mapping from
$H^{s+1}_e(2\pi)$ to $H^s_e(2\pi)$. Further, the difference $(
\tilde{N}^{pv}-\tilde{N}_0^\tau )$ (see equation~\eqref{n0_tau_def}) defines a
bounded mapping from $H^{s+1}_e(2\pi)$ into $H^{s+1}_e(2\pi)$. \end{lemma}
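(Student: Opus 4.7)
The plan is to exploit the decomposition~\eqref{Npv}, namely $\tilde{N}^{pv} = \frac{1}{\tau(\cos\theta)}\tilde D_0\,\tilde S\,\tilde T_0^\tau$, together with the analogous decomposition $\tilde{N}_0^\tau = \frac{1}{\tau(\cos\theta)}\tilde D_0\,\tilde S_0\,\tilde T_0$ that follows from~\eqref{N0Def} and~\eqref{n0_tau_def}. The key algebraic observation is that $\tilde Z_0\,\tilde T_0^\tau = \tilde T_0$ (which follows immediately from the definitions~\eqref{Z_op} and~\eqref{t_0_tau}, since the factors $\tau(\cos\theta)$ cancel), whence using $\tilde S_0^\tau = \tilde S_0 \tilde Z_0$ one obtains $\tilde D_0\,\tilde S_0^\tau\,\tilde T_0^\tau = \tilde D_0\,\tilde S_0\,\tilde T_0 = \tilde N_0$. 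Dividing by $\tau(\cos\theta)$ this yields
\begin{equation*}
\tilde{N}^{pv} - \tilde{N}_0^\tau = \frac{1}{\tau(\cos\theta)}\,\tilde D_0\,(\tilde S - \tilde S_0^\tau)\,\tilde T_0^\tau.
\end{equation*}

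Having extracted this clean expression for the difference, I would then trace the mapping properties piece by piece for $s\ge 0$: (a) $\tilde T_0^\tau:H^{s+1}_e(2\pi)\to H^s_e(2\pi)$ is bounded, as it is the composition of $T_0$ (continuous $H^{s+1}_e\to H^s_e$ by~\eqref{T0map}) with multiplication by the smooth, even, non-vanishing function $1/\tau(\cos\theta)$; (b) $\tilde S - \tilde S_0^\tau:H^s_e(2\pi)\to H^{s+3}_e(2\pi)$ is continuous by Lemma~\ref{S_bounded}; (c) $\tilde D_0:H^{s+3}_e(2\pi)\to H^{s+1}_e(2\pi)$ is continuous by the Corollary to Lemma~\ref{D0lemma} (applied with parameter $s+1>0$); and (d) multiplication by $1/\tau(\cos\theta)$ preserves $H^{s+1}_e(2\pi)$. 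Composing these four bounded operators yields $\tilde{N}^{pv} - \tilde{N}_0^\tau:H^{s+1}_e(2\pi)\to H^{s+1}_e(2\pi)$ continuously, which is the second assertion of the lemma.

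The first assertion then follows immediately: by Theorem~\ref{j_0_tau_thm}(ii), $\tilde N_0^\tau:H^{s+1}_e(2\pi)\to H^s_e(2\pi)$ is (bi)continuous, so the identity $\tilde{N}^{pv} = \tilde{N}_0^\tau + (\tilde{N}^{pv} - \tilde{N}_0^\tau)$ combined with the trivial embedding $H^{s+1}_e(2\pi)\hookrightarrow H^s_e(2\pi)$ gives $\tilde{N}^{pv}:H^{s+1}_e(2\pi)\to H^s_e(2\pi)$ bounded.

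The main obstacle is entirely conceptual rather than technical: a naive estimate of the composition $\tilde D_0\,\tilde S\,\tilde T_0^\tau$ would only produce $\tilde{N}^{pv}:H^{s+1}_e\to H^{s-1}_e$, since $\tilde D_0$ loses two derivatives while $\tilde S\,\tilde T_0^\tau$ is merely neutral. The crucial point is that this two-derivative loss must be absorbed by the extra smoothing present in $\tilde S - \tilde S_0^\tau$ (three derivatives, by Lemma~\ref{S_bounded}), while the non-smoothing part $\tilde S_0^\tau$ combines with $\tilde T_0^\tau$ in exactly the right way to reconstruct the operator $\tilde N_0^\tau$ whose mapping properties were already settled in Section~\ref{J_0_tau}. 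The algebraic identity $\tilde Z_0\,\tilde T_0^\tau = \tilde T_0$ is what enables this decomposition to go through cleanly.
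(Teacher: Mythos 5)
Your proposal is correct and follows essentially the same route as the paper: you derive the identity $\tilde{N}^{pv}-\tilde{N}_0^\tau=\frac{1}{\tau(\cos\theta)}\,\tilde D_0\,(\tilde S-\tilde S_0^\tau)\,\tilde T_0^\tau$ (the paper's equation~\eqref{pv_0}) and then chain the same mapping properties, $\tilde T_0^\tau:H^{s+1}_e\to H^s_e$, $\tilde S-\tilde S_0^\tau:H^s_e\to H^{s+3}_e$, and a two-derivative-loss bound for $\tilde D_0$, before recovering the first assertion from Theorem~\ref{j_0_tau_thm}(ii). The only cosmetic difference is that you obtain $\tilde D_0:H^{s+3}_e\to H^{s+1}_e$ by citing the corollary to the identity $\tilde D_0=-\frac14\tilde C(\tilde S_0^{-1})^2$, while the paper re-derives it on the spot by writing $\tilde D_0[\tilde\psi]=\tilde C\bigl[\frac{d^2\tilde\psi}{d\theta^2}\bigr]$; both rest on the continuity of $\tilde C$ from Lemma~\ref{Cprop}.
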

\begin{proof}
Using~\eqref{N0Def},~\eqref{n0_tau_def} and~\eqref{Npv} we obtain
\begin{equation}\label{pv_0}
{\tilde N}^{pv}[\tilde \varphi] =\tilde{N}^\tau_0[\tilde \varphi] +
\frac{1}{\tau(\cos\theta)} \tilde D_0 (\tilde{S}-\tilde{S}_0^\tau) \tilde
T_0^\tau [\tilde \varphi].
\end{equation}
As shown in Theorem~\ref{j_0_tau_thm} the operator $\tilde{N}_0^\tau :
H^{s+1}_e(2\pi)\to H^s_e(2\pi)$ on the right-hand side of this equation is
bounded. To establish the continuity of the second term on the right-hand
side of equation~\eqref{pv_0} we first note that, in view of
equation~\eqref{T0Def}, the operator $\tilde T_0 :  H^{s+1}_e(2\pi)\to
H^s_e(2\pi)$ is bounded, and therefore, so is $\tilde T_0^\tau$.  Further,
as shown in Lemma~\ref{S_bounded}, the operator
$(\tilde{S}-\tilde{S}^\tau_0)$ maps continuously $H^s_e(2\pi)$ into
$H^{s+3}_e(2\pi)$ so that, to complete the proof, it suffices to show that
the operator $\tilde D_0$ maps continuously $H^{s+3}_e(2\pi)$ into
$H^{s+1}_e(2\pi)$. But, for $\tilde\psi\in H^{s+3}_e(2\pi)$ ($s>0$) we can
write
\[
\tilde D_0[\tilde \psi](\theta) = \frac{1}{\sin\theta}\int_0^\theta
\frac{d^2}{d\theta^2}\tilde\psi(u) du,
\]
and since the zero-th order term in the cosine expansion of
$\frac{d^2}{d\theta^2}\tilde\psi$ vanishes, in view of~\eqref{Cdef} we have
\[
\tilde D_0[\tilde \psi] = \tilde{C}\left[ \frac{d^2 \tilde
\psi}{d\theta^2}\right].
\]
It therefore follows from Lemma~\ref{Cprop} that the second term
in~\eqref{pv_0} is a continuous map from $H^{s+1}_e(2\pi)$ into
$H^s_e(2\pi)$, that is, $(\tilde{N}^{pv}-\tilde{N}_0^\tau )$, as claimed.
\end{proof}
\begin{corollary}\label{N_bounded}
  For all $s\geq 0$ the operator $\tilde{N}$ can be extended as a continuous
  linear map from $H^{s+1}_e(2\pi)$ to $H^s_e(2\pi)$. Further, the
  difference $\tilde{N}-\tilde{N}_0^\tau$ defines a continuous operator from
  $H^{s+1}_e(2\pi)$ to $H^{s+1}_e(2\pi)$.
\end{corollary}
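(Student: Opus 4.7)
The plan is to combine the decomposition $\tilde{N} = \tilde{N}^g + \tilde{N}^{pv}$ from equation~\eqref{Nfact} with the estimates on $\tilde{N}^{pv}$ supplied by Lemma~\ref{N_pv_bounded}, after supplementing them with a continuity statement for the ``smoothing'' piece $\tilde{N}^g$. Since $\tilde{N}^{pv}$ has already been controlled in both of the senses needed, only $\tilde{N}^g$ must be analyzed here.

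I would first show that $\tilde{N}^g\colon H^s_e(2\pi)\to H^{s+1}_e(2\pi)$ is bounded for every $s\geq 0$. Its kernel, read off from equation~\eqref{Ng}, is the product of the Green's function $G_k(\mathbf{r}(\cos\theta),\mathbf{r}(\cos\theta'))$ and the smooth factor $k^2\,\tau(\cos\theta')\sin^2\theta'\,(\mathbf{n}_\theta\cdot\mathbf{n}_{\theta'})$, so it has exactly the logarithmic-plus-smooth structure exploited in the proof of Lemma~\ref{S_bounded}: the Hankel expansion $H_0^1(z)=\frac{2i}{\pi}J_0(z)\ln(z)+R(z)$ applies verbatim, and the additional smooth weight may be absorbed into the analogues of the coefficients $A_1$ and $A_2$ appearing there. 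Taylor expanding in $(t-t')$ around the diagonal and collecting terms, one obtains for $(d^2/d\theta^2)\tilde{N}^g$ an identity of exactly the shape~\eqref{Sfact_2}: a finite sum of operators $b_\ell(\cos\theta)\,\tilde{S}_0\tilde{Z}_\ell$ plus an integral operator with $C^{m+1}$ kernel. Equation~\eqref{S0reg} then gives continuity of the first piece $H^s_e\to H^{s+1}_e$, while the second piece is a continuous map $H^s_e\to H^{m+1}_e$; interpolation~\cite[Theorem 8.13]{Kress} yields the stated boundedness for all real $s\geq 0$.

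With this in hand, the two assertions of the corollary follow at once. For the first, Lemma~\ref{N_pv_bounded} gives $\tilde{N}^{pv}\colon H^{s+1}_e(2\pi)\to H^s_e(2\pi)$, and the smoothing bound just obtained yields $\tilde{N}^g\colon H^{s+1}_e(2\pi)\to H^{s+2}_e(2\pi)\subset H^s_e(2\pi)$; consequently $\tilde{N}$ is bounded from $H^{s+1}_e(2\pi)$ to $H^s_e(2\pi)$ on the dense subspace of smooth even $2\pi$-periodic functions on which equations~\eqref{Ng}--\eqref{Npv} initially define it, and extends uniquely by continuity. For the second assertion, decompose
\[
\tilde{N}-\tilde{N}_0^\tau \;=\; \tilde{N}^g \;+\; \bigl(\tilde{N}^{pv}-\tilde{N}_0^\tau\bigr),
\]
control the parenthesized term by Lemma~\ref{N_pv_bounded} as a bounded map $H^{s+1}_e(2\pi)\to H^{s+1}_e(2\pi)$, and observe that $\tilde{N}^g$ sends $H^{s+1}_e(2\pi)$ continuously into $H^{s+2}_e(2\pi)\subset H^{s+1}_e(2\pi)$.

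The main technical obstacle will be verifying that the presence of the genuinely two-variable smooth factor $\mathbf{n}_\theta\cdot\mathbf{n}_{\theta'}$ (as opposed to the pure product structure exploited for $\tilde{S}$) does not break the expansion argument of Lemma~\ref{S_bounded}. This is handled by Taylor expanding $\mathbf{n}_\theta\cdot\mathbf{n}_{\theta'}$ in $(\theta'-\theta)$ around the diagonal and collecting resulting coefficients with the analogous expansion of $A_1$; each resulting term is of the same type treated in Lemma~\ref{S_bounded}, so the estimate carries over with no essential change.
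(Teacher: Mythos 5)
Your proposal is correct and follows essentially the same route as the paper: the same decomposition \eqref{Nfact} into $\tilde{N}^g+\tilde{N}^{pv}$, the same appeal to Lemma~\ref{N_pv_bounded}, the same density-plus-continuity extension from smooth densities, and the same splitting $\tilde{N}-\tilde{N}_0^\tau=\tilde{N}^g+(\tilde{N}^{pv}-\tilde{N}_0^\tau)$. The only difference is that you work out the Lemma~\ref{S_bounded}-style kernel expansion establishing $\tilde{N}^g: H^{s+1}_e(2\pi)\to H^{s+2}_e(2\pi)$ in detail, whereas the paper simply asserts this (equation~\eqref{ng_cont}) by observing that the kernel of $\tilde{N}^g$ has the same logarithmic-plus-smooth structure as that of $\tilde{S}$ (and note the two-variable smooth factor $\mathbf{n}_\theta\cdot\mathbf{n}_{\theta'}$ poses no new difficulty, since $A_1(t,t')$ in Lemma~\ref{S_bounded} is already a general smooth function of both variables).
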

\begin{proof}
  From equation~\eqref{Ng} we see that $\tilde{N}^g$ has the same
  mapping properties as $\tilde{S}$ (Lemma~\ref{S_bounded}), namely
\begin{equation}\label{ng_cont}
\tilde N^g: H^{s+1}_e(2\pi)\to H^{s+2}_e(2\pi) \quad \mbox{ is continuous}.
\end{equation}
In view of Lemma~\ref{N_pv_bounded} it therefore follows that the
right hand side of equation~\eqref{Nfact},
 \begin{equation}\label{Npvg}
   \tilde{N}^g+\tilde{N}^{pv} : H^{s+1}_e(2\pi)\to H^{s}_e(2\pi),
 \end{equation}
 is a bounded operator for all $s\geq 0$. Equation~\eqref{Nfact} was
 established for functions $\tilde\varphi$ of the
 form~\eqref{phi_psi_theta} with $\varphi\in C^\infty(\Gamma)$.  But
 the set of such functions $\tilde\varphi$ is dense in
 $H^{s+1}_e(2\pi)$ for all $s>0$---as can be seen by considering,
 e.g., that the Chebyshev polynomials span a dense set in
 $H^{s+1}[-1,1]$.  It follows that $\tilde N$ can be uniquely extended
 to a continuous operator from $H^{s+1}_e(2\pi)$ to $H^{s}_e(2\pi)$,
 as claimed. Finally, $\tilde N- \tilde{N}_0^\tau = \tilde{N}^g
 +(\tilde N^{pv}-\tilde{N}^\tau_0)$ is continuous from
 $H^{s+1}_e(2\pi)$ into $H^{s+1}_e(2\pi)$, in view of
 equation~\eqref{ng_cont} and Lemma~\ref{N_pv_bounded}.

\end{proof}

The following lemma establishes a link, needed for our proof of
Lemma~\ref{Ninverse}, between the domain of the unweighted
hypersingular operator $\mathbf{N}$ considered in~\cite{Stephan}
(equation~\eqref{Nmapping} above) and the corresponding possible
domains of the weighted operator $\tilde{N}$
(equation~\eqref{Ndomain_3}); cf. also
Corollary~\ref{LemmaStephanSpace1} where the corresponding result for
the domains of the operators $\mathbf{S}$ and $\tilde{S}$ is given.

\begin{lemma}\label{LemmaStephanSpace3}
  Let $\tilde\psi$ belong to $H^{s+1}_e(2\pi)$ for $s>0$,
  $\psi(t)=\tilde\psi( \arccos t)$, $\psi:\; [-1,1]\rightarrow
  \mathbb{C}$, $\beta(\bf{p})=\psi\left( \bf{r}^{-1}(\bf{p})\right)$,
  $W(\bf{p})=\omega\left( \textbf{r}^{-1}(\bf{p})\right)$. Then the function
  $G=W\beta$ is an element of $\tilde{H}^{\frac{1}{2}}(\Gamma)$.
\end{lemma}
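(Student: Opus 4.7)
The plan is to mirror the Fourier-analytic argument of Lemma~\ref{LemmaStephanSpace0} and Corollary~\ref{LemmaStephanSpace1}, but to exploit the extra factor of $\omega(\mathbf{r}(t))=\sqrt{1-t^{2}}=\sin\theta$ supplied by the weight. This factor will be absorbed through a single integration by parts in~$\theta$, and the resulting multiplier $1/(i\xi)$ will provide exactly the extra power of $|\xi|$ needed to upgrade the Sobolev index from the value $-1/2$ obtained in Lemma~\ref{LemmaStephanSpace0} to the value $+1/2$ required here.

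I would begin by reducing to the model case $\Gamma=[-1,1]$: the smooth regular parametrization $\mathbf r$ induces an isomorphism between the relevant $\tilde H^{s}$ spaces, so the task becomes showing that the function
$$g(t):=\sqrt{1-t^{2}}\,\psi(t),\qquad t\in[-1,1],$$
extended by zero outside $[-1,1]$, belongs to $H^{1/2}(\mathbb R)$. Because $\sqrt{1-t^{2}}$ vanishes at $\pm 1$, this extension is continuous, so $\mathrm{supp}\,g\subseteq[-1,1]=\overline\Gamma$ is automatic; what must still be checked is the Fourier-side bound
$$\int_{\mathbb R}(1+|\xi|^{2})^{1/2}|\hat g(\xi)|^{2}\,d\xi<\infty.$$

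I would then pass to $t=\cos\theta$ to obtain $\hat g(\xi)=\int_{0}^{\pi}\sin^{2}\theta\,\tilde\psi(\theta)\,e^{-i\xi\cos\theta}\,d\theta$ and integrate by parts using $\partial_{\theta}e^{-i\xi\cos\theta}=i\xi\sin\theta\,e^{-i\xi\cos\theta}$. The boundary contributions vanish thanks to the factor $\sin\theta$, yielding, for $\xi\neq 0$,
$$\hat g(\xi)=-\frac{1}{i\xi}\int_{0}^{\pi}\tilde T_{0}[\tilde\psi](\theta)\,e^{-i\xi\cos\theta}\,d\theta$$
with $\tilde T_{0}$ as in~\eqref{T0Def}. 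Since $\tilde T_{0}:H_{e}^{s+1}(2\pi)\to H_{e}^{s}(2\pi)$ is continuous by~\eqref{T0map} and $s>0$, the function $\tilde T_{0}[\tilde\psi]$ satisfies the hypotheses of Lemma~\ref{LemmaStephanSpace0}, which yields $\int_{\mathbb R}|w(\xi)|^{2}/(1+|\xi|^{2})^{1/2}\,d\xi<\infty$ for the associated transform $w$. Combining this with the IBP identity and the elementary bound $(1+|\xi|^{2})/\xi^{2}\leq 2$ valid for $|\xi|\geq 1$, the tail contribution to the $H^{1/2}$-integral is controlled; on $|\xi|\leq 1$ the integrand is trivially bounded because $g\in L^{1}\cap L^{\infty}[-1,1]$ makes $\hat g$ continuous.

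The only point requiring real care is the vanishing of the boundary terms in the integration by parts, which holds precisely because of the weight factor $\sin\theta$. This is exactly the role $\omega$ plays throughout the paper: it forces $G=W\beta$ to vanish at the endpoints of $\Gamma$ and thus makes it a legitimate element of $\tilde H^{1/2}(\Gamma)$ rather than merely an element of $H^{1/2}$ on the ambient closed curve.
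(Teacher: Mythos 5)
Your proof is correct and follows essentially the same route as the paper: reduce to the flat arc, write $\hat g(\xi)=\int_0^\pi\tilde\psi(\theta)\sin^2\theta\,e^{-i\xi\cos\theta}d\theta$, integrate by parts (boundary terms killed by $\sin\theta$) to produce the factor $1/(i\xi)$ and the integrand $\tilde T_0[\tilde\psi]\in H^s_e(2\pi)$, then invoke Lemma~\ref{LemmaStephanSpace0}. The only cosmetic difference is the low-frequency bookkeeping: you handle $|\xi|\le 1$ by boundedness of $\hat g$, whereas the paper applies Lemma~\ref{LemmaStephanSpace0} a second time to the un-integrated form of $\hat g$; both are fine.
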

\begin{proof}
  It suffices to show that $g=\omega\psi\in
  \tilde{H}^{\frac{1}{2}}[-1,1]$ for the case $\Gamma =
  [-1,1]$. Extending $g$ by $0$ outside the interval $[-1,1]$, the
  Fourier transform of $g$ is given by
\begin{equation}\label{gxi0}
  \hat{g}(\xi)=\int_{-1}^1 \psi(t)e^{-i\xi t} \omega(t) dt=\int_0^\pi \psi(\cos\theta)e^{-i\xi\cos\theta}\sin^2\theta d\theta,
\end{equation}
since $\omega(t)=\sqrt{1-t^2}$ in the present case. Integrating by
parts we obtain
\begin{equation}\label{gxi1}
\hat{g}(\xi)=\frac{1}{i\xi}\int_0^\pi \frac{\partial }{\partial \theta}\left\{\psi(\cos\theta)\sin\theta\right\} e^{-i\xi\cos\theta} d\theta.
\end{equation}
It is easy to check that $\frac{\partial}{\partial\theta}\{
  \psi(\cos\theta) \sin\theta\}=\frac{\partial}{\partial\theta}\{
  \tilde\psi(\theta) \sin\theta\}$ is an element of $H^s_e(2\pi)$
and, thus, in view of equation~\eqref{gxi1} together with
Lemma~\ref{LemmaStephanSpace0} we obtain
 \begin{equation}\label{gxi}
 \int_\mathbb{R} \frac{|\hat{g}(\xi)|^2\xi^2}{(1+\xi^2)^\frac{1}{2}}d\xi <\infty \end{equation}
 It thus follows that the second term on the right-hand side of
 the identity
 \begin{equation}
 \int_\mathbb{R} |\hat{g}(\xi)|^2(1 + |\xi|^2)^\frac{1}{2}d\xi = \int_\mathbb{R} \frac{|\hat{g}(\xi)|^2}{(1+\xi^2)^\frac{1}{2}}d\xi + \int_\mathbb{R} \frac{|\hat{g}(\xi)|^2\xi^2}{(1+\xi^2)^\frac{1}{2}}d\xi  
 \end{equation}
 is finite.  The first term is also finite, as can be seen by applying
 Lemma~\ref{LemmaStephanSpace0} directly to equation~\eqref{gxi0}. The
 function $g$ thus belongs to $\tilde{H}^\frac{1}{2}[-1,1]$, and the
 proof is complete.
\end{proof}

\begin{lemma}\label{Ninverse}
 For all $s > 0$ the operator $\tilde{N}: H^{s+1}_e(2\pi)\to H^{s}_e(2\pi)$
  is invertible, and the inverse $\tilde{N}^{-1}: H^{s}_e(2\pi)\to
  H^{s+1}_e(2\pi)$ is a bounded operator.
\end{lemma}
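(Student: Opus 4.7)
The plan is to mirror the strategy used in the proof of Lemma~\ref{Sinverse}, with the model operator $\tilde{S}_0^\tau$ replaced by $\tilde{N}_0^\tau$. By Theorem~\ref{j_0_tau_thm}(ii), $\tilde{N}_0^\tau:H^{s+1}_e(2\pi)\to H^s_e(2\pi)$ is bicontinuous, and therefore I would write
\begin{equation*}
  \tilde{N}=\tilde{N}_0^\tau\bigl(I+(\tilde{N}_0^\tau)^{-1}(\tilde{N}-\tilde{N}_0^\tau)\bigr),
\end{equation*}
so that invertibility of $\tilde{N}$ is reduced to invertibility of the parenthesized operator on $H^{s+1}_e(2\pi)$.

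Next, I would argue that this parenthesized operator is a compact perturbation of the identity. Indeed, by Corollary~\ref{N_bounded}, $\tilde{N}-\tilde{N}_0^\tau$ defines a bounded operator from $H^{s+1}_e(2\pi)$ into $H^{s+1}_e(2\pi)$. Composing with the continuous $(\tilde{N}_0^\tau)^{-1}:H^{s+1}_e(2\pi)\to H^{s+2}_e(2\pi)$ (which exists by Theorem~\ref{j_0_tau_thm}(ii) applied one level up), we see that $(\tilde{N}_0^\tau)^{-1}(\tilde{N}-\tilde{N}_0^\tau)$ maps $H^{s+1}_e(2\pi)$ continuously into $H^{s+2}_e(2\pi)$, hence compactly into $H^{s+1}_e(2\pi)$ by Rellich's theorem. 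The Fredholm alternative then reduces the problem to showing injectivity of $\tilde{N}:H^{s+1}_e(2\pi)\to H^s_e(2\pi)$.

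For injectivity, I would rely on Lemma~\ref{LemmaStephanSpace3} to transport the problem to the functional framework of equation~\eqref{Nmapping}. Given $\tilde\psi\in H^{s+1}_e(2\pi)$ with $\tilde{N}[\tilde\psi]=0$, define $\psi(t)=\tilde\psi(\arccos t)$, $\beta(\mathbf{p})=\psi(\mathbf{r}^{-1}(\mathbf{p}))$, and $W(\mathbf{p})=\omega(\mathbf{r}^{-1}(\mathbf{p}))$. Lemma~\ref{LemmaStephanSpace3} then gives $\nu:=W\beta\in\tilde{H}^{\frac{1}{2}}(\Gamma)$. Unwinding the chain of change-of-variables identities~\eqref{N_param},~\eqref{NOmegaDef},~\eqref{NNdef} and~\eqref{Nomega}, the vanishing of $\tilde{N}[\tilde\psi]$ translates into $\mathbf{N}[\nu]=0$. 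Injectivity of the classical map~\eqref{Nmapping} forces $\nu=W\beta=0$, and since $W>0$ in the interior of $\Gamma$, we conclude $\beta\equiv 0$ and hence $\tilde\psi=0$. Applying the Fredholm alternative to $I+(\tilde{N}_0^\tau)^{-1}(\tilde{N}-\tilde{N}_0^\tau)$ yields its bicontinuity on $H^{s+1}_e(2\pi)$, and composition with $\tilde{N}_0^\tau$ gives the bicontinuity of $\tilde{N}:H^{s+1}_e(2\pi)\to H^s_e(2\pi)$, as required; Remark~\ref{inv_is_cont} (or equivalently, the open mapping theorem) guarantees that the inverse is automatically bounded.

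The main obstacle I anticipate is the injectivity step: one must carefully verify that $\tilde{N}[\tilde\psi]=0$ really does translate, through all the weighted/periodized changes of variable, into the statement $\mathbf{N}[W\beta]=0$ in the sense of the functional setting~\eqref{Nmapping}, and that the regularity $W\beta\in\tilde{H}^{\frac{1}{2}}(\Gamma)$ supplied by Lemma~\ref{LemmaStephanSpace3} is exactly what is needed to invoke injectivity of~\eqref{Nmapping}. The compactness half of the argument, by contrast, is essentially a Rellich embedding once Corollary~\ref{N_bounded} is in hand.
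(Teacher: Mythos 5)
Your proposal follows essentially the same route as the paper's proof of Lemma~\ref{Ninverse}: the factorization $\tilde{N}=\tilde{N}_0^\tau\bigl(I+(\tilde{N}_0^\tau)^{-1}(\tilde{N}-\tilde{N}_0^\tau)\bigr)$, compactness of the perturbation via Corollary~\ref{N_bounded} and the Sobolev embedding, injectivity via Lemma~\ref{LemmaStephanSpace3} together with the injectivity of the map~\eqref{Nmapping}, and the Fredholm alternative plus Remark~\ref{inv_is_cont} to conclude bicontinuity. The argument is correct as written.
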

\begin{proof}
In view of Theorem~\ref{j_0_tau_thm}, the operator $\tilde{N}_0^\tau\;\,:\;
H^{s+1}_e(2\pi)\to H^s_e(2\pi)$ is bicontinuous, and we may thus write
\begin{equation}
\tilde{N}=\tilde{N^\tau_0}\left( I +\left(\tilde{N}^\tau_0\right)^{-1}(\tilde{N}-\tilde{N}^\tau_0)\right).
\end{equation}
Since, by Corollary~\ref{N_bounded}, the difference
$\tilde{N}-\tilde{N}^\tau_0$ defines a bounded mapping from
$H^{s+1}_e(2\pi)$ into $H^{s+1}_e(2\pi)$, it follows that the operator
$\left(\tilde{N}^\tau_0\right)^{-1}(\tilde{N}-\tilde{N}^\tau_0)$ is bounded
from $H^{s+1}_e(2\pi)$ into $H^{s+2}_e(2\pi)$ and, in view of the Sobolev
embedding theorem, it is also compact from $H^{s+1}_e(2\pi)$ into
$H^{s+1}_e(2\pi)$.  The Fredholm theory can thus be applied to the operator
\begin{equation}\label{id_p_comp}
I+\left(\tilde{N}^\tau_0\right)^{-1}(\tilde{N}-\tilde{N}^\tau_0).
\end{equation}
This operator is also injective, in view of Lemma~\ref{LemmaStephanSpace3}
and the bicontinuity of the map $ \mathbf{N}$ in equation~\eqref{Nmapping},
and it is therefore invertible. The Lemma then follows from the bicontinuity
of the operator of $\tilde{N}_0^\tau$.
\end{proof}

\section{Generalized Calder\'on Formula: Proof of
  Theorem~\ref{NStheorem}}\label{five}
Collecting results presented in previous sections we can now present a
proof of Theorem~\ref{NStheorem}.\\
 \begin{proof} The bicontinuity of the operators $\tilde{S}$,
   $\tilde{N}$ and $\tilde{N}\tilde{S}$ follow directly from Lemmas
   \ref{S_bounded},~\ref{Sinverse},~\ref{Ninverse} and
   Corollary~\ref{N_bounded}. To establish equation~\eqref{NSfact}, on the
   other hand, we write
\begin{equation}
\tilde{N}\tilde{S}=\tilde{N}_0^\tau\tilde{S}_0^\tau + \tilde K
=\tilde{J}_0^\tau + \tilde K
\end{equation}
where as shown in Theorem~\ref{j_0_tau_thm}, $\tilde{J}_0^\tau$ is
bicontinuous, and where
\begin{equation}
\tilde K=\tilde{N}(\tilde{S} - \tilde{S}_0^\tau) + (\tilde{N}- \tilde{N}_0^\tau)\tilde{S}_0^\tau.
\end{equation}
In view of Lemma~\ref{S_bounded}, Corollary~\ref{N_bounded} and
Theorem~\ref{j_0_tau_thm}, the operator $\tilde K$ maps $H^s_e(2\pi)$ into
$H^{s+1}_e(2\pi)$ and is therefore compact from $H^s_e(2\pi)$ into
$H^s_e(2\pi)$. The proof is now complete.\; \end{proof}

\section*{Acknowledgments}
The authors gratefully acknowledges support from the National Science
Foundation and the Air Force Office of Scientific Research.

\appendix
\section{Proof of Lemma~\ref{Nfact_lemma}\label{Nfact_appendix}}
\begin{proof}
  Assuming $\varphi\in C^\infty(\Gamma)$, we define the weighted
  double layer potential by 
\begin{equation}
\mathbf{D}_\omega[\alpha](\textbf{r})=\int_\Gamma \frac{\partial
  G(\textbf{r},\textbf{r}')}{\partial
  \textbf{n}_{\textbf{r}'}}\alpha(\textbf{r}')\omega(\textbf{r}')d\ell',\quad \mbox{\textbf{r} outside } \Gamma,
\end{equation}
which, following the related closed-surface calculation presented in
\cite[Theorem 2.23]{ColtonKress1}, we rewrite as 
\begin{equation}\label{Ndiv}
  \mathbf{D}_\omega[\alpha](\textbf{r})=-\mathrm{div}_{\textbf{r}} \textbf{E}[\alpha](\textbf{r})
\end{equation}
where
\begin{equation}\label{vecE}
  \textbf{E}[\alpha](\textbf{r}) = \int_\Gamma
  G_k(\textbf{r},\textbf{r}')\alpha(\textbf{r}')\omega(\textbf{r}')\textbf{n}(\textbf{r}')d\ell'.
\end{equation}
Since $\textbf{E}[\alpha] = \textbf{E}=(E_x,E_y)$ satisfies
$\Delta\textbf{E}+k^2\textbf{E} = 0$, the two dimensional gradient of
its divergence can be expressed in the form
\begin{equation}
  \mathrm{grad}\;\mathrm{div}\textbf{E}=-k^2\textbf{E}+ \left(\frac{\partial
  }{\partial y} \mathrm{curl}\;\textbf{E}, -\frac{\partial}{\partial
  x}\mathrm{curl}\;\textbf{E} \right),
\end{equation}
where the {\em scalar rotational} of a two-dimensional vector field
$\textbf{A} = (A_x,A_y)$ is defined by $\mathrm{curl}\;\textbf{A} =
(\frac{\partial A_y}{\partial x} -\frac{\partial A_x}{\partial y})$.
Since ${\rm curl}_{\textbf{r}}\left(\textbf{n}(\textbf{r}')
  G(\textbf{r},\textbf{r}') \right )$ equals
$-\textbf{t}(\textbf{r}')\cdot \nabla_{\textbf{r}'}
G(\textbf{r},\textbf{r}') $ (see definition~\ref{tangent}), we obtain
\begin{equation}\label{Fexp}
  {\rm curl}_{\textbf{r}}\textbf{E}[\alpha](\textbf{r})=-\int_{-1}^1 \frac{d G_k(\textbf{r},\textbf{r}(t'))}{dt'}\alpha(\mathbf{r}(t'))\omega(\mathbf{r}(t'))dt'.
\end{equation}
Therefore, taking the gradient of~\eqref{Ndiv}, letting $\varphi(t') =
\alpha(\mathbf{r}(t'))$, using~\eqref{canon_sq},
integrating~\eqref{Fexp} by parts, and noting that the boundary terms
vanish identically (since $\sqrt{1-t'^2} =0$ for $t' =\pm 1$), we see
that
\begin{equation}\label{gradDomega}
  \mbox{grad}\; \mathbf{D}_\omega[\alpha](\textbf{r})=k^2\int_\Gamma G(\textbf{r},\textbf{r}')\alpha(\textbf{r}')\textbf{n}(\textbf{r}')\omega(\mathbf{r}')d\ell'-\left(\frac{\partial A}{\partial y},-\frac{\partial A}{\partial x}\right),
\end{equation}
where
\begin{equation}\label{A}
  A(\textbf{r})=  \int_{-1}^1G(\textbf{r},\textbf{r}(t'))\frac{d}{dt'}
  \big(\varphi(t')\sqrt{1-t'^2}\big)dt'.
\end{equation}
In view of the continuity of the tangential derivatives of single layer
potentials across the integration surface (e.g.~\cite[Theorem
2.17]{ColtonKress1}), in the limit as $\textbf{r}\to\textbf{r}(t)\in \Gamma$
we obtain
\begin{equation}
  \left(\frac{\partial A(\textbf{r}(t))}{\partial y},-\frac{\partial A(\textbf{r}(t))}{\partial x}\right)\cdot\textbf{n}(\textbf{r}(t))=-\frac{1}{\tau(t)}\frac{dA(\textbf{r}(t))}{dt},
\end{equation}
and the decomposition~(\ref{NomegaFact}) results.
\end{proof}

\section{Asymptotic behavior of $\mathbf{N}\mathbf{S}[1]$\label{NSOne}}
In this section we demonstrate the poor quality of the composition
$\mathbf{N}\mathbf{S}$ of the unweighted hypersingular and single-layer
operators by means of an example: we consider the flat arc $[-1,1]$ at zero
frequency ($\mathbf{N}\mathbf{S}=\mathbf{N}_0\mathbf{S}_0$). In detail, in
Section~\ref{app_b1} we show that the image of $\mathbf{S}$ is not contained
in the domain of $\mathbf{N}$ (and, thus, the formulation
$\mathbf{N}\mathbf{S}$ cannot be placed in the functional
framework~\cite{Stephan,StephanWendland,StephanWendland2}), and in
Section~\ref{app_b2} we study the edge asymptotics of the function
$\mathbf{N}\mathbf{S}[1]$ which show, in particular, that the function $1$,
(which itself lies in $H^s[-1,1]$ for arbitrarily large values of $s$) is
mapped by the operator $\mathbf{N}\mathbf{S}$ into a function which does not
belong to the Sobolev space $H^{-\frac{1}{2}}[-1,1]$, and, thus, to any
space $H^s[-1,1]$ with $s\geq -1/2$.

We thus consider the {\em unweighted} single-layer and hypersingular
operators which, in the present flat-arc, zero-frequency case take
particularly simple forms. In view of~\eqref{Sdef}, the {\em parameter-space
form} of the unweighted single-layer operator (which is defined in a manner
analogous to that inherent in equation~\eqref{sOmegaDef} and related text)
is given by
\begin{equation}\label{S00}
S_0[\varphi](x)=-\frac{1}{2\pi}\int_{-1}^1\ln|x-s|\varphi(s)ds.
\end{equation}
With regards to the parameter-space form $N_0$ of the hypersingular
operator~\eqref{Ndef} we note, with reference to that equation, that in the
present zero-frequency flat-arc case we have $\mathbf{r}=(x,0)$,
$z\textbf{n}_{\mathbf{r}} = (0,z)$ and
$-d/d\textbf{n}_{\mathbf{r}'}=d/dz$. Since, additionally, the single layer
potential~\eqref{single_l} yields a solution of the Laplace equation in the
variables $(x,z)$, we have
\begin{equation}\label{N0zlimit}
4\pi N_0[\varphi](x)=-\lim_{z\rightarrow 0 }
\frac{d^2}{dx^2}\int_{-1}^1 \varphi(s)
\ln((x-s)^2+z^2)ds,
\end{equation}
or equivalently,
\begin{equation}\label{N0tangeant}
N_0[\varphi](x)=\frac{1}{4\pi}\lim_{z\rightarrow 0 }
\frac{d}{dx}\int_{-1}^1 \varphi(s) \frac{d}{ds} \ln((x-s)^2+z^2)ds.
\end{equation}
Note that, in view of the classical regularity theory for the Laplace
equation, letting $z$ tend to zero for $-1<x<1$ in equation
of~\eqref{N0zlimit} we also obtain, for smooth $\varphi$,
\begin{equation}\label{N00}
N_0[\varphi](x)=\frac{d^2}{dx^2}S_0[\varphi](x),\quad -1<x<1.
\end{equation}
\subsection{The operator $S_0$\label{app_b1}}
Integrating~\eqref{S00} by parts we obtain
\begin{equation}
-2\pi S_0[1](x)=\int_{-1}^1 \frac{d(s-x)}{ds}\ln|s-x|ds =
 (1-x)\ln(1-x) + (1+x)\ln(1+x) -2,
\end{equation}
and therefore
\begin{equation}\label{S0base}
S_0[1](x)=\frac{1}{2\pi}\big( 2 - (1-x)\ln(1-x) -(1+x)\ln(1+x)\big).
\end{equation}
Incidentally, this expression shows that the unweighted
single-layer operator does not map $C^\infty$ functions into $C^\infty$
functions up to the edge; a more general version of this result is given
in~\cite[p. 182]{Tricomi}.

The following two lemmas provide details on certain mapping properties of the operator $S_0$ .

\begin{lemma}\label{S0reg1} 
The image $S_0[1]$ of the constant function $1$ by the operator~\eqref{S00}
is an element of $H^\frac{1}{2}[-1,1]$.
\end{lemma}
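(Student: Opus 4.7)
The plan is to work directly from the closed-form expression~\eqref{S0base} for $S_0[1]$ and to establish the stronger statement that $S_0[1]\in H^1[-1,1]$, from which the desired $H^{1/2}[-1,1]$ membership follows at once by the continuous embedding $H^1[-1,1]\hookrightarrow H^{1/2}[-1,1]$.

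First I would observe that $S_0[1]$ is continuous on the closed interval $[-1,1]$: the two terms $(1\mp x)\ln(1\mp x)$ each tend to $0$ as $x\to\pm 1$ (with $S_0[1](\pm 1)=(1-\ln 2)/\pi$), so $S_0[1]$ is bounded on $[-1,1]$ and therefore belongs to $L^2[-1,1]$.

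Next, differentiating~\eqref{S0base} termwise (using $\frac{d}{dx}[(1-x)\ln(1-x)]=-\ln(1-x)-1$ and $\frac{d}{dx}[(1+x)\ln(1+x)]=\ln(1+x)+1$) gives
\begin{equation*}
  \frac{d}{dx}S_0[1](x) = \frac{1}{2\pi}\ln\frac{1-x}{1+x}.
\end{equation*}
The only singularities of this derivative are logarithmic, located at $x=\pm 1$. Since $\ln(1\mp x)\in L^p[-1,1]$ for every finite $p$, in particular $\frac{d}{dx}S_0[1]\in L^2[-1,1]$, and hence $S_0[1]\in H^1[-1,1]$. Combining this with the embedding $H^1[-1,1]\hookrightarrow H^{1/2}[-1,1]$ yields the lemma.

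There is essentially no obstacle in this argument: once the closed form~\eqref{S0base} is in hand, the regularity reduces to the observation that a logarithmic singularity is too mild to spoil square integrability. In particular, no appeal to Slobodecki\u\i{} seminorms or to Fourier/Chebyshev characterizations of fractional Sobolev spaces is required. (It is worth noting that, although $S_0[1]\in H^{1/2}[-1,1]$, the nonzero boundary values of $S_0[1]$ at $x=\pm 1$ will prevent membership in $\tilde H^{1/2}[-1,1]$, which is the feature exploited in the subsequent analysis of $\mathbf{N}\mathbf{S}[1]$.)
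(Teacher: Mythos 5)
Your proof is correct, and it takes a genuinely different route from the paper's. The paper does not touch the closed-form expression~\eqref{S0base} at all: it extends the constant function $1$ by zero to a closed curve $\Gamma_1\supset[-1,1]$, notes that this extension lies in $L^2(\Gamma_1)\subset H^{-\frac12}(\Gamma_1)$ and is supported in $[-1,1]$, so that $1\in\tilde H^{-\frac12}[-1,1]$ by Definition~\ref{htil_def}, and then invokes the mapping property~\eqref{Smapping} of $\mathbf{S}$ to conclude $S_0[1]\in H^{\frac12}[-1,1]$. You instead argue from the explicit formula: $S_0[1]$ is bounded on $[-1,1]$, its derivative is $\frac{1}{2\pi}\ln\frac{1-x}{1+x}$ (the differentiation is right, and since the derivative is integrable the function is absolutely continuous, so this is also the weak derivative), and a logarithmic singularity is square integrable, giving the stronger conclusion $S_0[1]\in H^1[-1,1]$ and hence membership in $H^{\frac12}[-1,1]$ by embedding. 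Your argument is self-contained and elementary, and it yields more regularity than stated; the paper's argument, on the other hand, requires no explicit evaluation and is the natural one within the functional framework~\eqref{Smapping}--\eqref{Nmapping} the appendix is exercising (it would apply verbatim to curved arcs and nonzero frequency, where no closed form is available). Your closing remark is also consistent with the paper: the failure of $S_0[1]$ to lie in $\tilde H^{\frac12}[-1,1]$ is exactly the content of the subsequent Lemma~\ref{S0reg_ex}, and it is caused by the nonzero endpoint values, as you note.
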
 
\begin{proof}
Let $\Gamma_1$ be a closed, smooth curve which includes the segment
$[-1,1]$. Clearly, the function
\begin{equation}\label{f1}
f_1(s)=\left\{ \begin{array}{ll}1,& s \in [-1,1]\\ 0,& s\in
  \Gamma_1\backslash [-1,1]\end{array}\right.
\end{equation}
belongs to $L^2(\Gamma_1)$ and therefore to $H^{-\frac{1}{2}}(\Gamma_1)$, so
that, according to Definition~\ref{htil_def}, the constant 1 is in the space
$\tilde{H}^{-\frac{1}{2}}[-1,1]$. In view of equation~\eqref{Smapping}, it
follows that $S_0[1]\in H^{\frac{1}{2}}[-1,1]$.
\end{proof}
\begin{lemma}\label{S0reg_ex} 
The image $S_0[1]$ of the constant function $1$ by the operator~\eqref{S00}
is not an element of $\tilde H^\frac{1}{2}[-1,1]$.
 
\end{lemma}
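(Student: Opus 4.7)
The plan is to derive a contradiction from Definition~\ref{htil_def}. Suppose $S_0[1]\in\tilde H^{1/2}[-1,1]$; then, for any smooth closed curve $\dot G_1\supseteq[-1,1]$, the extension $F$ of $S_0[1]$ by zero to $\dot G_1\setminus[-1,1]$ must lie in $H^{1/2}(\dot G_1)$. Using~\eqref{S0base} together with the standard limits $(1-x)\ln(1-x)\to 0$ and $(1+x)\ln(1+x)\to 2\ln 2$ as $x\to 1^-$, and symmetry at $x=-1$, I would first record the endpoint values
\[
S_0[1](\pm 1) \;=\; \frac{1-\ln 2}{\pi} \;>\; 0.
\]
Since $S_0[1]$ is manifestly continuous on $[-1,1]$ by~\eqref{S0base}, the zero extension $F$ therefore exhibits a genuine jump discontinuity of magnitude $(1-\ln 2)/\pi$ at each endpoint $x=\pm 1$.

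The main step is then to show that no function on $\dot G_1$ with a true jump discontinuity can lie in $H^{1/2}(\dot G_1)$. For this I would invoke the Slobodeckij seminorm characterization of $H^{1/2}$ on a smooth closed curve: every $F\in H^{1/2}(\dot G_1)$ must satisfy
\[
\iint_{\dot G_1\times\dot G_1} \frac{|F(p)-F(q)|^2}{d(p,q)^2}\,d\ell(p)\,d\ell(q) \;<\;\infty,
\]
where $d(p,q)$ denotes arclength distance. Parametrizing $\dot G_1$ by arclength in a neighborhood of $x=1$, the jump of $F$ gives $|F(p)-F(q)|\ge (1-\ln 2)/(2\pi)$ whenever $p\in(1-\epsilon,1)$ and $q\in(1,1+\epsilon)$ for $\epsilon$ small enough. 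The contribution to the double integral from this bilateral rectangle is therefore bounded below by a positive multiple of
\[
\int_{-\epsilon}^{0}\!\int_{0}^{\epsilon}\frac{dv\,du}{(v-u)^2},
\]
which diverges logarithmically as the endpoint $u=v=0$ is approached. This contradicts $F\in H^{1/2}(\dot G_1)$ and establishes the lemma; the analogous computation at $x=-1$ gives a second, redundant obstruction.

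The main (mild) obstacle is simply a clean invocation of the Slobodeckij characterization in the smooth-curve setting; if a more self-contained route is preferred, one can pass to a local straight chart at $x=1$, decompose $F$ there as the sum of a smooth function and a Heaviside-type jump, and apply the classical Fourier-analytic observation that $\hat F(\xi)\sim c/\xi$ as $|\xi|\to\infty$ prevents $\int |\hat F(\xi)|^2(1+|\xi|^2)^{1/2}d\xi$ from being finite. Both routes locate the obstruction in the same phenomenon: $H^{1/2}$ is the critical Sobolev exponent at which jump discontinuities are just barely excluded, and $S_0[1]$ has the wrong boundary trace for its zero extension to remain in that space.
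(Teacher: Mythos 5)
Your proof is correct, but it takes a genuinely different route from the paper's. The paper works on the Fourier side: it extends $S_0[1]$ by zero, integrates by parts to get $\int_{-1}^1 e^{-i\xi x}S_0[1](x)\,dx = \frac{2\sin\xi}{\xi}S_0[1](1) + O\bigl(\frac{\ln\xi}{\xi^2}\bigr)$, and concludes that since $S_0[1](1)\neq 0$ the transform decays only like $1/\xi$, so $\int|\hat\varphi(\xi)|^2(1+|\xi|^2)^{1/2}d\xi=\infty$. You instead isolate the same underlying fact---the nonzero endpoint value $S_0[1](\pm1)=\frac{1-\ln 2}{\pi}$, which you compute correctly from~\eqref{S0base}---and argue in physical space: the zero extension has a jump at $\pm1$, and the Slobodeckij seminorm $\iint |F(p)-F(q)|^2/d(p,q)^2$ diverges across any jump, since the bilateral contribution is bounded below by a multiple of $\int_{-\epsilon}^0\int_0^\epsilon (v-u)^{-2}\,dv\,du=\infty$. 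Your argument is sound (the one-sided continuity of $S_0[1]$ up to the endpoint, needed for the pointwise lower bound on $|F(p)-F(q)|$, is immediate from~\eqref{S0base}), and your fallback sketch via a local chart and the $\hat F(\xi)\sim c/\xi$ behavior of a Heaviside-type jump is essentially the paper's computation in disguise. What each approach buys: the paper's integration-by-parts route is fully explicit and quantifies the decay rate (including the $\ln\xi/\xi^2$ correction coming from the logarithmic terms, via the Gradshteyn sine/cosine-integral identities), which makes the borderline nature of the failure visible; your route is more conceptual and more portable---it reduces the lemma to the single scalar fact that $S_0[1]$ does not vanish at the endpoints, together with the general principle that $H^{1/2}$ is the critical space excluding jumps---at the cost of invoking the equivalence between the Gagliardo seminorm and the $H^{1/2}(\dot G_1)$ norm on a smooth closed curve (a standard but uncited equivalence; note the paper's own proof likewise tacitly identifies $\tilde H^{1/2}[-1,1]$ with zero-extensions in $H^{1/2}(\mathbb{R})$, so neither route is free of this kind of localization step).
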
 
\begin{proof} 
In view of~\eqref{S0base} and the fact that $S_0[1](x)$ is an even function
of $x$, integration by parts yields
\begin{equation}\label{S0IBP}
\int_{-1}^1 e^{-i\xi x}S_0[1](x)dx = \frac{2\sin \xi}{\xi}S_0[1](1)+
  \frac{1}{2\pi i\xi}\int_{-1}^1 e^{-i\xi x
  }\ln\frac{1-x}{1+x}dx.\end{equation} Taking into account the identities
  \cite[eq. 4.381, p 577]{Gradshteyn}
\begin{equation}\label{sinlogint}
\left\{\begin{array}{ll} \int_0^1\ln x \cos \xi x\, dx &=
-\frac{1}{\xi}\left[\mathrm{si}(\xi) + \frac{\pi}{2} \right], \\ \int_0^1\ln
x \sin \xi x\, dx &= -\frac{1}{\xi}\left[\textbf{C} +\ln \xi -
  \mathrm{ci}(\xi) \right],\end{array} \right. \quad \xi>0
\end{equation}
where $\mathbf{C}$ is the Euler constant, and where $\mathrm{si}(\xi)$ and
$\mathrm{ci}(\xi)$ are the sine and cosine integrals respectively, (both of
which are bounded functions of $\xi$ as $|\xi|$ tends to infinity), it is
easily verified that the second term in~\eqref{S0IBP} behaves asymptotically
as $\frac{\ln(\xi)}{\xi^2}$ as $\xi$ tends to infinity. Clearly, the first
term of~\eqref{S0IBP} decays as $O(\frac{1}{\xi})$, and therefore
\begin{equation}\label{FT_S0}
\left|\int_{-1}^1 e^{-i\xi x}\left(S_0[1](x)\right) dx \right|^2 =
 O\left(\frac{1}{\xi^2}\right), \quad \xi \rightarrow \infty.
\end{equation}

Equation~\eqref{FT_S0} tells us that the function $\varphi:\mathbb{R}\to
\mathbb{R}$ which equals $S_0[1](x) $ for $x$ in the interval
$[-1,1]$ and equals zero in the complement of this interval, does not belong
to $H^\frac{1}{2}\left(\mathbb{R} \right)$, and, thus, $S_0[1]\not\in\tilde
H^\frac{1}{2}[-1,1]$, as claimed.
\end{proof}
\begin{remark}\label{S0remark}
Lemmas ~\ref{S0reg1} and ~\ref{S0reg_ex} demonstrate that, as pointed out in
Section~\ref{intro}, the formulation $\mathbf{N}\mathbf{S}$ of the
open-curve boundary-value problems under consideration cannot be placed in
the functional framework put forth
in~\cite{Stephan,StephanWendland,StephanWendland2} and embodied by
equations~\eqref{Smapping},~\eqref{Nmapping} and definition~\ref{htil_def}:
the image of the operator $\mathbf{S}$ is not contained in the domain of
definition of the operator $\mathbf{N}$; see equations~\eqref{Smapping}
and~\eqref{Nmapping}.
\end{remark}

\subsection{The combination $N_0S_0$\label{app_b2}}
While, as pointed out in the previous section, $S_0[1]$ does not belong to
the domain of definition of $N_0$ (as set up by the
formulation~\eqref{Smapping},~\eqref{Nmapping}), the quantity $N_0S_0[1](x)$
can be evaluated pointwise for $|x|<1$, and it is instructive to study its
asymptotics as $x\to \pm 1$.
\begin{lemma}\label{N0L2}
$N_0S_0[1]$ can be expressed in the form
\begin{equation}\label{N0S01}
N_0S_0[1](x)=\frac{\ln2-1}{\pi^2(1-x^2)} + \mathcal{L}(x), 
\end{equation}
where $\mathcal{L}\in L^2[-1,1]$.
\end{lemma}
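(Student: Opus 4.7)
The plan is to apply the pointwise identity $N_0=\tfrac{d^2}{dx^2}S_0$ from~\eqref{N00} to the explicit function $S_0[1]$ recorded in~\eqref{S0base}, and isolate the endpoint singularity by a single integration by parts in a principal-value Cauchy integral. First I would differentiate~\eqref{S00} in $x$ to obtain
\[
  N_0[\varphi](x)=-\frac{1}{2\pi}\,\frac{d}{dx}\,\mathrm{p.v.}\!\int_{-1}^1\frac{\varphi(s)}{x-s}\,ds,\qquad -1<x<1,
\]
and apply this with $\varphi=S_0[1]$. From~\eqref{S0base} one immediately reads off the boundary values $\varphi(\pm1)=(1-\ln 2)/\pi$ and the derivative $\varphi'(s)=\tfrac{1}{2\pi}\ln\tfrac{1-s}{1+s}$, which is an element of $L^2[-1,1]$.

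The key step is to integrate by parts in the p.v.\ integral using $-\tfrac{d}{ds}\ln|x-s|=1/(x-s)$. Since $\varphi$ is continuous up to the endpoints and $\varphi'\in L^2$, the standard justification for Cauchy principal-value integrals applies and yields
\[
  \mathrm{p.v.}\!\int_{-1}^1\frac{S_0[1](s)}{x-s}\,ds = -\frac{1-\ln 2}{\pi}\,\ln\!\left|\frac{x-1}{x+1}\right| + \frac{1}{2\pi}\int_{-1}^1 \ln\tfrac{1-s}{1+s}\,\ln|x-s|\,ds.
\]
Applying $-\tfrac{1}{2\pi}\tfrac{d}{dx}$ and using $\tfrac{d}{dx}\ln\bigl|(x-1)/(x+1)\bigr|=-2/(1-x^2)$, the explicit logarithmic boundary term contributes exactly the claimed singular part $(\ln 2-1)/(\pi^2(1-x^2))$, while the derivative of the remaining integral equals
\[
  \mathcal{L}(x)=-\frac{1}{4\pi^2}\,\mathrm{p.v.}\!\int_{-1}^1\frac{\ln\tfrac{1-s}{1+s}}{x-s}\,ds,
\]
i.e.\ a constant multiple of the finite Hilbert transform on $(-1,1)$ of $g(s)=\ln\tfrac{1-s}{1+s}$.

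The remaining task is to show $\mathcal{L}\in L^2[-1,1]$. Since $g\in L^2[-1,1]$ (in fact $g\in L^p$ for every $p<\infty$) and the finite Hilbert transform on $(-1,1)$ is bounded on $L^p$ for every $1<p<\infty$ by the classical theorem of M.\ Riesz/Tricomi, this follows immediately. The main technical obstacle I expect is the rigorous justification of the interchange of $d/dx$ with the limit $z\to 0^+$ appearing in~\eqref{N0zlimit}, together with the bookkeeping of boundary terms in the principal-value integration by parts; both are standard once one uses that $\varphi=S_0[1]$ is continuous on $[-1,1]$ and smooth in the open interval, with $\varphi'$ integrable, but the exact coefficient $(1-\ln 2)/\pi$ must be tracked with care to pin down the $1/(1-x^2)$ coefficient in~\eqref{N0S01}.
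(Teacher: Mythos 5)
Your proposal is correct, and its algebra lands exactly where the paper's does: differentiating the logarithmic potential, integrating by parts against the endpoint values $S_0[1](\pm 1)=(1-\ln 2)/\pi$, and using $\frac{d}{dx}\ln\left|\frac{x-1}{x+1}\right|=-\frac{2}{1-x^2}$ produces the same singular coefficient $\frac{\ln 2-1}{\pi^2}$ and the same remainder $\mathcal{L}(x)=-\frac{1}{4\pi^2}\,\mathrm{p.v.}\int_{-1}^1 \ln\frac{1-s}{1+s}\,\frac{ds}{x-s}$ that appears in the paper's equation~\eqref{N0pv} (the paper merely organizes the bookkeeping differently, splitting $S_0[1]=\frac{1}{\pi}-\frac{g}{2\pi}$ and treating $N_0[1]$ via~\eqref{N00} and $N_0[g]$ via integration by parts in~\eqref{N0tangeant}, with the interior-point justification of~\eqref{N00} covering the interchange-of-limits issue you flag). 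The genuine divergence is in the last step: where you dispose of $\mathcal{L}\in L^2[-1,1]$ by citing the M.~Riesz/Tricomi theorem on $L^p$-boundedness of the finite Hilbert transform applied to $g(s)=\ln\frac{1-s}{1+s}\in L^2$, the paper proves the same fact by hand, explicitly evaluating $\mathcal{L}^\pm(x)=\mathrm{p.v.}\int_{-1}^1\frac{\ln(1\mp s)}{s-x}\,ds$ (equations~\eqref{L0}--\eqref{SecondInt}) and showing each is a bounded function plus logarithmic terms. Your route is shorter and leans on a standard classical result; the paper's is self-contained (consistent with the elementary style of the appendix) and yields sharper pointwise information, namely that the non-singular part is at worst logarithmic near the endpoints rather than merely square-integrable. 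Either argument suffices for the lemma.
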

\begin{proof} 
In view of~\eqref{S0base} we have
\begin{equation}\label{N0S0_start}
N_0S_0[1](x)=\frac{1}{\pi}N_0[1](x) - \frac{1}{2\pi}N_0[g](x),
\end{equation}
where 
\begin{equation} 
g(x)=(1-x)\ln(1-x) + (1+x)\ln(1+x).
\end{equation} 
For the first term on the right-hand side of this equation we obtain
from~\eqref{N00} and~\eqref{S0base}
\begin{equation}\label{N0_1}
N_0[1](x)= -\frac{1}{\pi(1-x^2)}\, .
\end{equation}
To evaluate the second term $N_0[g]$ in equation~\eqref{N0S0_start}, in
turn, we first integrate by parts equation~\eqref{N0tangeant} and take limit
as $z\to 0$ and thus obtain
\begin{equation}
\begin{split}
N_0[g](x)=\frac{1}{2\pi}\left(\frac{d}{dx}\left(\big[\ln|x-s|g(s)\big]_{-1}^1\right) - \frac{d}{dx}\int_{-1}^1
\ln|x-s| \frac{d}{ds}g(s)ds\right)\\
=\frac{\ln 2}{\pi}\frac{d}{dx}\left(\ln\left(\frac{1-x}{1+x}\right) \right)
+ \frac{1}{2\pi}\frac{d}{dx}\int_{-1}^1
\ln|x-s| \ln\left(\frac{1-s}{1+s}\right)ds,
\end{split}
\end{equation}
or
\begin{equation}\label{N0pv}
N_0[g](x)=\frac{-2\ln2}{\pi(1-x^2)}-\frac{1}{2\pi}p.v.\int_{-1}^1\ln\left(\frac{1-s}{1+s}\right)\frac{1}{s-x}ds.
\end{equation}
Clearly, to complete the proof it suffices to establish that the functions
\begin{equation}
\mathcal{L}^+(x)=p.v.\int_{-1}^1\frac{\ln(1-s)}{s-x}ds\quad\mbox{and}\quad
\mathcal{L}^-(x)=p.v.\int_{-1}^1\frac{\ln(1+s)}{s-x}ds
\end{equation}
are elements of $L^2[-1,1]$.

Let us consider the function $\mathcal{L}^+$ for $x\geq 0$
first. Re-expressing $\mathcal{L}^+(x)$ as the sum of the integrals over the
interval $[x-(1-x), x + (1-x)] = [2x-1,1]$ (which is symmetric respect to
$x$ plus the integral over $[-1,2x-1]$ and using a simple change of
variables we obtain
\begin{equation}\label{L0}
\mathcal{L}^+(x)= \int_0^{1-x}\frac{\ln(1-x-u)-\ln(1-x+u)}{u}du +
\int_{1-x}^{1+x}\frac{\ln(1-x+u)}{u}du.
\end{equation}
Letting $z = 1-x$ and $v = \frac{u}{z}$, we see that the first integral
in~\eqref{L0} is a constant function of $x$:
\begin{equation}\label{FirstInt}
 \int_0^{z}\frac{\ln(z-u)-\ln(z+u)}{u}du =
 \int_0^1\frac{\ln(1-v)-\ln(1+v)}{v}dv = \mbox{const}.
\end{equation}
For the second integral in~\eqref{L0}, on the other hand, we write
\begin{equation}\label{SecondInt}
\begin{split}
\int_{1-x}^{1+x}\frac{\ln(1+u-x)}{u}du
=\int_{1-x}^{1+x}\frac{\ln(1+\frac{u}{1-x})}{u}du +
\ln(1-x)\int_{1-x}^{1+x}\frac{du}{u}\\
=\int_1^{\frac{1+x}{1-x}}\frac{\ln(1+v)}{v}dv +
\ln(1-x)\ln\left(\frac{1+x}{1-x}\right)\\
=\int_1^{\frac{1+x}{1-x}}\frac{\ln(1+v)}{1+v}dv
+\int_1^{\frac{1+x}{1-x}}\frac{\ln(1+v)}{v(1+v)}dv +
\ln(1-x)\ln\left(\frac{1+x}{1-x}\right)\\
=\frac{1}{2}\left(\ln^2\left(\frac{2}{1-x}\right)-\ln^2 2\right)
+\int_1^{\frac{1+x}{1-x}}\frac{\ln(1+v)}{v(1+v)}dv +
\ln(1-x)\ln\left(\frac{1+x}{1-x}\right).
\end{split}
\end{equation}
Since the second term on the last line of equation~\eqref{SecondInt} is
bounded for $0\leq x<1$, it follows that, in this interval, the function
$\mathcal{L}^+(x)$ equals a bounded function plus a sum of logarithmic terms
and is thus an element of $L^2[0,1]$. Using a similar calculation it is
easily shown that $\mathcal{L}^+(x)$ is bounded for $-1\leq x <0$, and it
thus follows that $\mathcal{L}^+\in L^2[-1,1]$, as desired. Analogously, we
have $\mathcal{L}^-\in L^2[-1,1]$, and the lemma follows.
\end{proof}
\begin{corollary}
Let $\Gamma = [-1,1]$. Then $\mathbf{N}\mathbf{S}[1]$ does not belong to the
codomain $H^{-\frac{1}{2}}[-1,1]$ of the operator $\mathbf{N}$ in
equation~\eqref{Nmapping}.
\end{corollary}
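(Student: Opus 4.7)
My plan is to combine Lemma~\ref{N0L2} with a duality argument. Lemma~\ref{N0L2} gives the decomposition $\mathbf{N}\mathbf{S}[1] = c/(1-x^2) + \mathcal{L}$ with $c = (\ln 2 - 1)/\pi^2 \ne 0$ and $\mathcal{L} \in L^2[-1,1] \subset H^{-\frac{1}{2}}[-1,1]$, so the corollary reduces to showing that the pointwise function $1/(1-x^2)$ on $(-1,1)$ does not extend to any element of $H^{-\frac{1}{2}}[-1,1]$.

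Assume for contradiction that it does. Since $H^{-\frac{1}{2}}[-1,1] = (\tilde{H}^{\frac{1}{2}}[-1,1])^{*}$, such an extension would yield a uniform bound $\left|\int_{-1}^{1}\phi(x)/(1-x^{2})\,dx\right| \le C\|\phi\|_{\tilde{H}^{\frac{1}{2}}}$ for every $\phi \in C_c^\infty(-1,1)$. I would violate this bound with a dyadic sequence $\phi_n = \sum_{k=1}^n \eta_k \in C_c^\infty(-1,1)$, where $\eta_k(x) = \eta(2^k(1-x) - 3/2)$ is a fixed nonnegative bump $\eta \in C_c^\infty((0,1))$ rescaled to have width of order $2^{-k}$ and to sit near $1 - 3\cdot 2^{-k-1}$; a direct check shows the supports are pairwise disjoint and contained in $(-1,1)$. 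Two elementary estimates then close the argument: (i) since $\|\eta_k\|_{L^1} \sim 2^{-k}$ and $1-x^2 \sim 2^{-k}$ on $\mathrm{supp}(\eta_k)$, each individual integral $\int \eta_k(x)/(1-x^2)\,dx$ is of order one, so $\int \phi_n/(1-x^2)\,dx \sim n$; (ii) by scale invariance of the homogeneous $\dot H^{\frac{1}{2}}(\mathbb{R})$ seminorm, each $\|\eta_k\|_{\dot H^{\frac{1}{2}}}$ equals a fixed constant independent of $k$, while the Gagliardo cross integrals between $\eta_k$ and $\eta_j$ ($k<j$) are bounded by $C\,2^{k-j}$ per pair and contribute $O(1)$ per $k$, so altogether $\|\phi_n\|_{\tilde{H}^{\frac{1}{2}}}^{2} = O(n)$. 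The ratio therefore diverges like $\sqrt{n}$, contradicting the assumed bound.

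The main obstacle is step (ii): although each bump is scale invariant in $\dot H^{\frac{1}{2}}$, the Gagliardo double integral for a sum of widely separated bumps could a priori grow superlinearly, and one must verify, using the explicit dyadic separation of supports together with the geometric decay $2^{k-j}$, that cross contributions remain summable. An alternative, more conceptual approach that avoids bookkeeping of bump norms is to localize near $x=1$ by multiplying by a smooth cutoff and then change variables $y = 1-x$, reducing the question to whether any distributional extension of $1/y|_{y>0}$ lies in $H^{-\frac{1}{2}}(\mathbb{R})$. Any such extension equals $\mathrm{pv}(1/y)$ plus a finite linear combination of $\delta_0^{(j)}$; their Fourier transforms are $-i\pi\operatorname{sgn}\xi$ and $(i\xi)^j$ respectively, neither of which is square integrable against $(1+\xi^2)^{-1/2}\,d\xi$, so no extension in $H^{-\frac{1}{2}}(\mathbb{R})$ can exist.
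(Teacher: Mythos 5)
Your main argument is correct, and it reaches the conclusion by a genuinely different route than the paper. Both proofs start from Lemma~\ref{N0L2} and reduce the corollary to the single fact that $1/(1-x^2)$ admits no extension in $H^{-\frac12}[-1,1]$. The paper proves this fact by passing to the primitive $-\tfrac12\ln\frac{1-x}{1+x}$, so that the question becomes whether a cut-off logarithm lies in $H^{\frac12}(0,\infty)$, and then invokes the Lions--Magenes double-integral characterization of $H^{\frac12}$ together with an explicit change of variables showing the integral diverges. You instead stay at the $H^{-\frac12}$ level and argue by duality with $\tilde H^{\frac12}[-1,1]$, violating the bound $|\int\phi/(1-x^2)|\le C\|\phi\|_{\tilde H^{1/2}}$ with a dyadic family of bumps accumulating at the endpoint: the pairing grows like $n$ while the norm grows like $\sqrt n$. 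This buys you independence from the explicit integral identities and from the (mildly delicate) equivalence ``$h\in H^{-1/2}$ iff its primitive is in $H^{1/2}$''; the cost is the bump bookkeeping you flag in step (ii), which does check out --- the diagonal terms are scale-invariant, the cross terms are bounded by $C2^{k-j}$ by the support separation $\gtrsim 2^{-k}$, and in fact, since the bumps are nonnegative with pairwise disjoint supports, the Gagliardo cross terms are non-positive, so $\|\phi_n\|_{\tilde H^{1/2}}^2=O(n)$ follows with essentially no computation.

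One caution about your ``alternative, more conceptual'' sketch at the end: as stated it has a gap. After localizing near $x=1$ and setting $y=1-x$, a hypothetical $H^{-\frac12}(\mathbb{R})$ element is constrained to equal $c/y$ (times a smooth factor) only on $y>0$; its restriction to $y<0$ is completely free, so it need not be $\mathrm{pv}(1/y)$ plus a combination of $\delta_0^{(j)}$ --- one may add an arbitrary distribution supported in $\{y\le 0\}$. The Fourier-transform computation therefore does not rule out all extensions, and repairing it requires a genuinely one-sided argument, which is precisely what your dyadic duality construction (or the paper's Lions--Magenes computation) provides. Since that sketch is offered only as an alternative, the proof as a whole stands on your primary argument.
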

\begin{proof}
In view of Lemma~\ref{N0L2} it suffices to show that the function
$h(x)=\frac{1}{1-x^2}$ does not belong to $H^{-\frac{1}{2}}[-1,1]$, or,
equivalently, that the primitive $k(x)=-\frac{1}{2}\ln\frac{1-x}{1+x}$ of $h$ does not
belong to $H^\frac{1}{2}[-1,1]$.  Clearly, to establish that $k\not\in
H^\frac{1}{2}[-1,1]$ it suffices to show that the function
$\ell(x)=p(x)\ln(x)$ is not an element of $H^\frac{1}{2}[0,\infty[$, where
$p$ is a smooth auxiliary function defined for $x\geq 0$ which equals 1 in
the interval $[0,1]$ and which vanishes outside the interval $[0,2]$. 

To do this we appeal to the criterion~\cite[p. 54]{LionsMagenes}
\[
\ell\in H^\frac{1}{2}(0,\infty) \iff \ell\in L^2(0,\infty) \quad \mbox{and}
\int_0^\infty t^{-2}dt\int_0^\infty \left|\ell(x+t) - \ell(x) \right|^2dx<
\infty.
\]
To complete the proof of the lemma it thus suffices to show that the
integral
\[
I = \int_0^\infty t^{-2}dt\int_0^1 \left|\ln(x+t) - \ln(x) \right|^2dx
\]
is infinite. But, using the change of variables $u=\frac{t}{x}$ we obtain
\begin{equation}
I=\int_0^{\infty}\frac{1}{t}dt\left(\int_t^\infty
\frac{\left|\ln(1+u)\right|^2}{u^2}du\right)=\infty,
\end{equation}
and the lemma follows.
\end{proof}

%

\bibliographystyle{plain}
\bibliography{./Screens}

\end{document}